\DeclareMathOperator{\rand}{rand}
\DeclareMathOperator{\diag}{diag}
\begin{document}

\title[\hfil  PINT for Fourth Order Time Dependent PDEs]
{Diagonalization Based Parallel-in-Time Method for a Class of Fourth Order Time Dependent PDEs}
 
\author[G. Garai. B. C. Mandal \hfilneg]
{Gobinda Garai, Bankim C. Mandal}

\address{Gobinda Garai \newline
School of Basic Sciences,
Indian Institute of Technology Bhubaneswar, India}
\email{gg14@iitbbs.ac.in}

\address{Bankim C. Mandal \newline
School of Basic Sciences,
Indian Institute of Technology Bhubaneswar, India}
\email{bmandal@iitbbs.ac.in} 

\thanks{Submitted.}
\subjclass[]{65N06, 65N12, 65Y05, 65N15, 65Y20}
\keywords{Parallel-in-Time (PinT), Convergence analysis, Fouth-order PDEs, Cahn-Hilliard equation.}

\begin{abstract}
In this paper, we design, analyze and implement efficient time parallel method for a class of fourth order time-dependent partial differential equations (PDEs), namely biharmonic heat equation, linearized  Cahn-Hilliard (CH) equation and the nonlinear CH equation. We use diagonalization technique on all-at-once system to develop efficient iterative time parallel methods for investigating the solution behaviour of said equations. We present the convergence analysis of Parallel-in-Time (PinT) algorithms.  We verify our findings by presenting numerical results.
\end{abstract}

\maketitle
\numberwithin{equation}{section}
\newtheorem{theorem}{Theorem}[section]
\newtheorem{lemma}[theorem]{Lemma}
\newtheorem{definition}[theorem]{Definition}
\newtheorem{proposition}[theorem]{Proposition}
\newtheorem{remark}[theorem]{Remark}
\allowdisplaybreaks

\section{Introduction} \label{intro}
In this work we are interested in designing time parallel algorithm for linear and non-linear fourth order time-dependent PDEs based on Waveform relaxation (WR) and the diagonalization technique. The simplest linear fourth order time-dependent PDE is given by
\begin{equation}\label{modelproblem1} 
    u_t =-\Delta^2u,\;  (x,t)\in\Omega\times(0,T],
\end{equation}
and the linearized Cahn-Hilliard equation is given by
\begin{equation}\label{modelproblem2} 
    u_t =-\beta^2\Delta u-\epsilon^2\Delta^2u,\;  (x,t)\in\Omega\times(0,T],
\end{equation}
where $\Omega\subset\mathbb{R}^d, d=1, 2$. The equation  \eqref{modelproblem1} is related to modelling of clamping plate, thin beam etc. The existence of solution and numerical solution of \eqref{modelproblem1} can be seen in \cite{gustafsson1995time} for different choices of boundary conditions. The equation \eqref{modelproblem2} is the linearization of the Cahn-Hilliard (CH) equation about a solution in spinodal region, see \cite{Elliott, grant1993spinodal}, where the parameter $\beta$ takes value in $[-1/\sqrt{3}, 1/\sqrt{3}]$.
We also formulate time parallel algorithm for the nonlinear CH equation.
The CH equation has been suggested as a prototype to represent the evolution of a binary melted alloy below the critical temperature in \cite{Cahn, Hilliard}, it also arises from the Ginzburg-Landau free energy functional: 
$$\mathcal{E}(u):=\int_{\Omega} F(u)+\frac{\epsilon^2}{2}\vert\nabla u\vert^2 , $$
where $F(u) = 0.25(u^2 - 1)^2$ and $\frac{\epsilon^2}{2}\vert\nabla u\vert^2$ is the gradient energy and $\epsilon (0<\epsilon\ll 1)$ is the thickness of the interface. The CH equation with the natural homogeneous Neumann boundary condition takes the form 
\begin{equation}\label{CH}
\begin{cases}
u_t = \Delta v, & (x,t)\in\Omega\times(0,T],\\
v=f(u) - \epsilon^2\Delta u, & (x,t)\in\Omega\times(0,T],\\
\partial_{\nu}u=0=\partial_{\nu}v, & (x,t)\in\partial\Omega\times(0,T], 
\end{cases} 
\end{equation}
where $f(u)=F'(u)$ and $\nu$ is outward unit normal to $\partial\Omega$. The solution $u(x, t)$ of \eqref{CH} is always bounded, thus we assume the following 
\begin{equation}\label{Lipchitz_CH}
\max_u \vert f'(u)\vert \leq M,
\end{equation}
where $M$ is a non-negative constant. The solution of the CH equation involves two different dynamics, one is phase separation  which is quick in time, and another is phase coarsening which is slow in time. The fine-scale phase regions are formed during the early stage and they are separated by the interface of width  $\epsilon.$ Whereas during phase coarsening, the solution tends to an equilibrium state which minimizes
the system energy.
By differentiating the energy functional $\mathcal{E}(u)$ and total mass $\int_{\Omega}u$ with respect to time $t$, we get 
\begin{equation}\label{energy minimization}
\frac{d}{dt}\mathcal{E}(u)\leq 0, \mbox{\hspace{1cm}} \frac{d}{dt}\int_{\Omega}u = 0.
\end{equation}
The relations in \eqref{energy minimization} says that the CH equation describes energy minimization and the total mass conservation while the system evolves. Since the non-increasing property \eqref{energy minimization} of the total energy is an essential feature of the CH equation, it is a key issue for long time simulation that is expected to be preserved by numerical methods. 
The CH equation is a nonlinear equation and is impossible to find its analytical solution but there exists solution of the CH equation as shown in \cite{ElliottZheng, liuzhao}. Various research have been done on finding numerical scheme for the CH equation to approximate the solution with either Dirichlet \cite{DuNicolaides, David} or Neumann boundary conditions\cite{elliott1987numerical, furihata2001stable, shin2011conservative, ElliottZheng, Elliott, StuartHumphries, Christlieb, ChengFeng}. A review on different numerical treatments to the CH equation can be found in \cite{lee2014physical}. The possible application of the CH equation as a model are: image inpainting \cite{EsedoAndrea}, tumour growth simulation \cite{Wise}, population dynamics \cite{cohen1981generalized}, dendritic growth \cite{kim1999universal}, planet formation \cite{tremaine2003origin}, etc.

The above described works are all in time stepping fashion for advancement of evolution of the CH equation. To get a good accuracy of the numerical solution of CH equation one needs the spatial mesh size $h$ is of $O(\epsilon)$ or even finer. Then the discretized equation will result in a very large scale algebraic system that should be solved sequentially for capturing the long term behaviour of the CH equation.   
Consequently, it is of great importance to accelerate the simulation using parallel computation, which can be fulfilled by time parallel technique. In this work, we lay our efforts on designing PinT method for above mentioned evolution equations.
In the last few decades there are lot of efforts on formulating various type of time parallel technique, for an overview see \cite{gander50year}. In this work we use the WR \cite{lelarasmee1982waveform} and diagonalization technique \cite{gander2016direct, gander2017halpern, maday2008parallelization} to get a PinT algorithm. A diagonalization based method was first proposed in \cite{maday2008parallelization}, which rely on reformulating space-time discretization as an \textit{all-at-once} system using tensor product and use diagonalization of temporal discretization matrix. For temporal discretization matrix to be diagonalizable one needs to use variable time step size as given in \cite{gander2016direct}; for equidistant time step the temporal discretization matrix is a Jordan block. The described method in \cite{gander2016direct} for variable time step size suffers form roundoff error and the accuracy of temporal variable. The works in \cite{mcdonald2018preconditioning, goddard2019note} use Strang type block $\alpha$-circulant matrix as a preconditioner for \textit{all-at-once} system and the resulting preconditioned system can be solved using diagonalization technique.  Recently, in \cite{gander2019shulin} Gander \& Wu proposed a time parallel iterative algorithm which uses uniform time step to solve initial value problem. A \textit{periodic-like} initial condition is imposed to get a Strang type $\alpha$-circulant temporal matrix, which is diagonalizable. We use similar approach in this paper.

The rest of this paper is arrange as follows. We introduce in Section \ref{Section2} the time parallel algorithm in one and two spatial dimension for the equation \eqref{modelproblem1}, and study the convergence behaviour. In Section \ref{Section3} we formulate the time parallel algorithm for the equation \eqref{modelproblem2}, and present the convergence result. In Section \ref{Section4} we design the PinT algorithm for the equation \eqref{CH}, and present the convergence result. To illustrate our analysis, the accuracy and robustness of the proposed formulation, we show numerical results in Section \ref{Section5}.

\section{Linear Time-Dependent Biharmonic Equation}\label{Section2}
In this section we consider the linear time-dependent biharmonic equation as given below
\begin{equation}\label{modelproblem1a} 
\begin{cases}
    u_t =-\Delta^2u, & (x,t)\in\Omega\times(0,T],\\
   \partial_{\nu}u=0=\partial_{\nu}(\Delta u), & (x,t)\in\partial\Omega\times(0,T],\\
   u(x,0) = u_0(x), & x\in\Omega,
\end{cases}
\end{equation}
where $\nu$ is outward unit normal to $\partial\Omega$ and $\Omega\subset\mathbb{R}$. The PinT algorithm for \eqref{modelproblem1a}, based on WR iteration and \textit{periodic-like} initial condition, is the following 
\begin{equation}\label{modelproblem1a_wr} 
\begin{cases}
    u_t^k =-\Delta^2u^k, & (x,t)\in\Omega\times(0,T],\\
   u^k(x,0)=\alpha u^k(x,T)-\alpha u^{k-1}(x,T) +u_0(x) , & x\in\Omega,\\
   \partial_{\nu}u^k=0=\partial_{\nu}(\Delta u^k), & (x,t)\in\partial\Omega\times(0,T],
\end{cases}
\end{equation}
where $k\geq 1$ is the WR iteration index and $\alpha$ is the PinT free parameter. Evidently upon convergence of \eqref{modelproblem1a_wr} the term $\alpha (u^k(x,T)- u^{k-1}(x,T))$ is cancelled and therefore the converged solution is the solution of \eqref{modelproblem1a}. We now introduce the diagonalization based implementation of \eqref{modelproblem1a_wr}.
\subsection{Discretization and PinT Formulation}
Using the centred finite difference scheme for spatial derivative in an interval with equidistant grid points and homogeneous Neumann boundary condition as described in \eqref{modelproblem1a}, we have the following semi-discrete system corresponding to \eqref{modelproblem1a_wr}
\begin{equation}\label{semi-discrete1}
  \begin{cases}
    \dot{\textbf{u}}^k(t) + A\textbf{u}^k(t) =0, \\
    \textbf{u}^k(0)=\alpha \textbf{u}^k(T) -\alpha \textbf{u}^{k-1}(T) + \textbf{u}_{0},
  \end{cases}
\end{equation}
where $\textbf{u} = (u^1, u^2,\cdots, u^{N_x})^T\in \mathbb{R}^{N_x}$, and $A=\Delta^2_h \in\mathbb{R}^{N_x \times N_x}$ with 
\begin{equation}\label{deltah}
\Delta_h= \frac{1}{h ^2}
\begin{bmatrix}
    -2  & 2  \\
    1  & -2& 1   \\
    &\ddots&\ddots &\ddots \\
     & & 1  & -2& 1   \\
    & & & 2 & -2    
\end{bmatrix}.
\end{equation}
Employing the linear $\theta$-method to \eqref{semi-discrete1} yields
\begin{equation}\label{linear-theta1}
  \begin{cases}
    \frac{\textbf{u}_n^k - \textbf{u}_{n-1}^k}{\Delta t} + A\left( \theta \textbf{u}_n^k + (1-\theta)\textbf{u}_{n-1}^k\right) =0, & n=1,2,\cdots, N_t \\
    \textbf{u}_{0}^k=\alpha \textbf{u}_{N_t}^k -\alpha \textbf{u}_{N_t}^{k-1} + \textbf{u}_{0},
  \end{cases}
\end{equation}
where $N_t={T}/{\Delta t}$ and $\theta\in[0, 1]$. We consider $\theta=1$ and $\theta=1/2$, which corresponds to Backward-Euler and Trapezoidal  scheme. After gathering all the space-time points in \eqref{linear-theta1}, we have the following \textit{all-at-once} or  \textit{space-time} system
\begin{equation}\label{all-at-once1}
\left( C_1^{(\alpha)}\otimes I_x + C_2^{(\alpha)}\otimes A\right) \textbf{U}^k = \textbf{b}^{k-1}, 
\end{equation}
where $\textbf{U}^k = (\textbf{u}_1^k, \textbf{u}_2^k,\cdots, \textbf{u}_{N_t}^k)^T\in \mathbb{R}^{N_x N_t}, C_1^{\alpha}, C_2^{\alpha} \in\mathbb{R}^{N_t\times N_t}$  and  $\textbf{b}^{k-1}\in\mathbb{R}^{N_t N_x}$ are given by
\begin{equation}\label{ciculantmat}
 C_1^{(\alpha)}= \frac{1}{\Delta t}
\begin{bmatrix}
    1  & & & -\alpha \\
    -1  &  1   \\
    &\ddots&\ddots  \\
    & & -1 & 1    
\end{bmatrix}_{N_t\times N_t},
 C_2^{(\alpha)}= 
\begin{bmatrix}
     \theta  & & & (1-\theta)\alpha \\
      1-\theta &  \theta   \\
    & \ddots &\ddots  \\
    & & 1-\theta & \theta       
\end{bmatrix}_{N_t\times N_t}, 
\end{equation} 
\[
\textbf{b}^{k-1}=\left( (\textbf{u}_{0}-\alpha \textbf{u}_{N_t}^{k-1})\left(  \frac{1}{\Delta t}I_x - (1-\theta)A \right),\textbf{0},\cdots,\textbf{0} \right)^T.
\]
The matrices $C_{1,2}^{(\alpha)}$ are Strang type $\alpha-$circulant matrices and can be diagonalizable as stated in the following Lemma.
\begin{lemma}[Diagonalization of $\alpha$-circulant matrix, see \cite{bini2005numerical}]
The matrices $C_{1,2}^{(\alpha)}$ defined in \eqref{ciculantmat} can be diagonalized as 
\[
C_{j}^{(\alpha)} = VD_jV^{-1}, D_j=\diag(\sqrt{N_t}\mathbb{F}\Gamma_{\alpha}C_{j}^{(\alpha)}(:,1)),\; j=1,2,
\]
where $V=\Gamma_{\alpha}^{-1}\mathbb{F}^*, \mathbb{F}= \frac{1}{\sqrt{N_t}}\left[ \omega_0^{(l_1-1)(l_2-1)} \right]_{l_1,l_2=1}^{N_t}$ (with $i=\sqrt{-1}$ and $\omega_0=e^{\frac{2\pi i}{N_t}}$), $\Gamma_{\alpha}=\diag(1, \alpha^{\frac{1}{N_t}},\cdots,\alpha^{\frac{N_t-1}{N_t}})$  and $C_{j}^{(\alpha)}(:,1)$ being the first column of $C_{j}^{(\alpha)}, j=1, 2$.  
\end{lemma}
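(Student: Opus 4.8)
The plan is to establish the claimed factorization directly from the general theory of $\alpha$-circulant matrices, treating $C_1^{(\alpha)}$ and $C_2^{(\alpha)}$ as instances of the same structure. First I would recall that a matrix $C^{(\alpha)}\in\mathbb{R}^{N_t\times N_t}$ is $\alpha$-circulant precisely when each entry satisfies $C^{(\alpha)}_{l_1,l_2}=c_{(l_1-l_2)\bmod N_t}$, with the subdiagonal wrap-around entries scaled by $\alpha$; one checks from \eqref{ciculantmat} that both $C_1^{(\alpha)}$ and $C_2^{(\alpha)}$ have exactly this form, with first columns $C_1^{(\alpha)}(:,1)=\frac{1}{\Delta t}(1,-1,0,\dots,0)^T$ and $C_2^{(\alpha)}(:,1)=(\theta,1-\theta,0,\dots,0)^T$ and the single super-corner entry carrying the factor $\alpha$.

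Next I would use the standard reduction of an $\alpha$-circulant to an ordinary circulant. Writing $\Gamma_{\alpha}=\diag(1,\alpha^{1/N_t},\dots,\alpha^{(N_t-1)/N_t})$, a short computation shows $\Gamma_{\alpha}C^{(\alpha)}\Gamma_{\alpha}^{-1}$ is an ordinary (unscaled) circulant matrix $\widetilde{C}$ whose first column is $\Gamma_{\alpha}C^{(\alpha)}(:,1)$ — the diagonal conjugation exactly absorbs the $\alpha$-weight on the corner into the cyclic shift. Then I invoke the classical diagonalization of circulants by the (unitary) Fourier matrix $\mathbb{F}$: $\widetilde{C}=\mathbb{F}^*D\mathbb{F}$ where $D=\diag(\sqrt{N_t}\,\mathbb{F}\,\widetilde{C}(:,1))$, since the eigenvalues of a circulant are the (scaled) DFT of its first column. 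Substituting back, $C^{(\alpha)}=\Gamma_{\alpha}^{-1}\mathbb{F}^*D\mathbb{F}\Gamma_{\alpha}=VDV^{-1}$ with $V=\Gamma_{\alpha}^{-1}\mathbb{F}^*$ and $V^{-1}=\mathbb{F}\Gamma_{\alpha}$ (using $\mathbb{F}^*\mathbb{F}=I$), and $D=D_j=\diag(\sqrt{N_t}\,\mathbb{F}\Gamma_{\alpha}C_j^{(\alpha)}(:,1))$ for $j=1,2$. Applying this to each of $C_1^{(\alpha)},C_2^{(\alpha)}$ in turn yields the statement, and crucially the \emph{same} $V$ works for both because $\Gamma_{\alpha}$ and $\mathbb{F}$ do not depend on the particular circulant.

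The main obstacle — really the only nontrivial bookkeeping — is verifying the identity $\Gamma_{\alpha}C^{(\alpha)}\Gamma_{\alpha}^{-1}=\widetilde{C}$, i.e. confirming that the diagonal scaling turns the $\alpha$-weighted wrap-around into a genuine cyclic structure; this amounts to checking that the $(l_1,l_2)$ entry $\alpha^{(l_1-l_2)/N_t}$-scaling combined with the convention $\alpha^{1}=\alpha$ on the single corner term produces the right powers of $\alpha^{1/N_t}$ on every diagonal. Once that index computation is done, everything else is the textbook circulant diagonalization, so I would simply cite \cite{bini2005numerical} for the circulant-to-$\alpha$-circulant reduction and the DFT diagonalization, and present the conjugation identity and the assembly $V=\Gamma_{\alpha}^{-1}\mathbb{F}^*$ explicitly.
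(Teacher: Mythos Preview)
Your proposal is correct and is precisely the standard argument underlying the cited result: conjugate by $\Gamma_{\alpha}$ to reduce the $\alpha$-circulant to an ordinary circulant, then diagonalize the latter by the DFT matrix. Note that the paper itself offers no proof of this lemma at all---it is simply quoted from \cite{bini2005numerical}---so your outline in fact supplies more detail than the paper does; the only thing to add is that your index check on $\Gamma_{\alpha}C^{(\alpha)}\Gamma_{\alpha}^{-1}$ goes through exactly as you describe.
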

\noindent Using the property of tensor product, we can factor the \textit{all-at-once} system $C_1^{(\alpha)}\otimes I_x + C_2^{(\alpha)}\otimes A $ as $(V\otimes I_x)(D_1\otimes I_x + D_2\otimes A)(V^{-1}\otimes I_x )$, and we can solve \eqref{all-at-once1} by performing the following three steps
\begin{equation}\label{pintiter1}
\begin{aligned}
\text{Step}-(1)\; & S_1=(\mathbb{F}\otimes I_x)(\Gamma_{\alpha}\otimes I_x)\textbf{b}^{k-1},\\
\text{Step}-(2)\; & S_{2,n}=(\lambda_{1,n}I_x + \lambda_{2,n}A)^{-1}S_{1,n},\; n=1,2,\cdots N_t,\\
\text{Step}-(3)\; & \textbf{U}^{k}=(\Gamma_{\alpha}^{-1}\otimes I_x)(\mathbb{F}^*\otimes I_x)S_2,\\
\end{aligned}
\end{equation}
where $D_j=\diag(\lambda_{j,1},\cdots, \lambda_{j,N_t}), S_j=(S_{j,1}^T,\cdots,S_{j,N_t}^T)^T$  for $j=1,2$. The eigenvalues $\lambda_{1,n}$ and $\lambda_{2,n}$ are defined by 
\[
\lambda_{1,n}:=1-\alpha^{\frac{1}{N_t}}e^{-i\frac{2n\pi}{N_t}},\; \lambda_{2,n}:=\theta + (1-\theta)\alpha^{\frac{1}{N_t}}e^{-i\frac{2n\pi}{N_t}}
\] In \eqref{pintiter1}, Step-(1) and Step-(3) can be computed using FFT with $O(N_xN_t\log N_t)$ operations. Step-(2) is fully parallel. The convergence behaviour of the above described algorithm are given in the following section.
\subsection{Convergence Analysis}\label{subsection1}
In this section we present the error estimate for \eqref{modelproblem1a_wr} at different level.
\subsubsection{Error estimate at continuous level}
Let $e^k(x,t) = u^k(x,t)-u(x,t)$ be the error function at each PinT iteration. Then from \eqref{modelproblem1a} and \eqref{modelproblem1a_wr} we have the following error equation at continuous level
\begin{equation}\label{modelproblem1a_err} 
\begin{cases}
    e_t^k =-\Delta^2e^k, & (x,t)\in\Omega\times(0,T],\\
   e^k(x,0)=\alpha( e^k(x,T)- e^{k-1}(x,T)) , & x\in\Omega.
\end{cases}
\end{equation}
Taking Fourier transform in space for the equation \eqref{modelproblem1a_err}, we obtain the system of differential equations for each Fourier mode $\omega$ as follows 
\begin{equation}\label{modelproblem1a_err_fourier} 
\begin{cases}
    \hat e_t^k =-\omega^4 \hat e^k, & t\in(0,T],\\
   \hat e^k(0)=\alpha( \hat e^k(T)- \hat e^{k-1}(T)).
\end{cases}
\end{equation}
Solving \eqref{modelproblem1a_err_fourier} for each Fourier mode we have 
\begin{equation}\label{erreq1}
 \hat e^k(t) = e^{-\omega^4 t} \hat e^k(0).
\end{equation}
From \eqref{erreq1} we have $\parallel \hat e^k(T)\parallel_{l^2} \leq e^{-\omega^4 T}\parallel \hat e^k(0)\parallel_{l^2}$ and from $\hat e^k(0)=\alpha( \hat e^k(T)- \hat e^{k-1}(T))$ we have $\parallel\hat e^k(0)\parallel_{l^2} \leq \vert\alpha\vert \parallel\hat e^k(T)\parallel_{l^2}+ \vert\alpha\vert\parallel\hat e^{k-1}(T)\parallel_{l^2}$. Thus by using the first inequality twice in the second inequality we have 
\begin{equation}
\parallel\hat e^k(0)\parallel_{l^2} \leq \vert\alpha\vert e^{-\omega^4 T} \parallel\hat e^k(0)\parallel_{l^2}+ \vert\alpha\vert e^{-\omega^4 T} \parallel\hat e^{k-1}(0)\parallel_{l^2},
\end{equation}
and this imply 
\begin{equation}\label{erreq2}
\parallel\hat e^k(0)\parallel_{l^2}\leq \frac{\vert\alpha\vert e^{-\omega^4 T}}{1-\vert\alpha\vert e^{-\omega^4 T}} \parallel\hat e^{k-1}(0)\parallel_{l^2},
\end{equation}
where $\parallel \bullet\parallel_{l^2}$ is the sequential $l^2$ norm for the Fourier frequency variable.
\begin{theorem}[Error estimate at continuous level]\label{thm1}
Let $\{u^k\}_{k\geq 1}$ be the functions generated by PinT method \eqref{modelproblem1a_wr}, with $\vert\alpha\vert <1$. Then, the error function $e^k(x,t) $ satisfies the following linear convergence estimate
\[
\parallel e^k \parallel_{L^{\infty}(0, T;L^2(\Omega))} \leq \left(\frac{\vert\alpha\vert e^{-\omega_{\min}^4 T}}{1-\vert\alpha\vert e^{-\omega_{\min}^4 T}} \right) ^k  \parallel e^0 \parallel_{L^{\infty}(0, T;L^2(\Omega))},
\]
where $\omega_{\min}$ is the lowest Fourier frequency. 
\end{theorem}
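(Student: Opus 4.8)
The plan is to upgrade the per-frequency recursion \eqref{erreq2} to an estimate that is uniform in the Fourier variable, iterate it in $k$, and then return to physical space via Parseval's identity. First I would record the elementary monotonicity fact that for fixed $T>0$ and $|\alpha|<1$ the function
\[
g(s):=\frac{|\alpha|e^{-s}}{1-|\alpha|e^{-s}},\qquad s\ge 0,
\]
is well defined (the denominator stays positive since $|\alpha|e^{-s}\le|\alpha|<1$) and strictly decreasing, being the composition of the decreasing map $s\mapsto|\alpha|e^{-s}$ into $[0,1)$ with the increasing map $t\mapsto t/(1-t)$ on $[0,1)$. Applying this with $s=\omega^4T$ shows that the contraction factor appearing in \eqref{erreq2} is largest at the smallest admissible frequency, i.e. $g(\omega^4T)\le g(\omega_{\min}^4T)=:\rho$ for every Fourier mode $\omega$. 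Hence \eqref{erreq2}, read mode by mode and then summed in the Fourier variable, yields $\|\hat e^k(0)\|_{l^2}\le \rho\,\|\hat e^{k-1}(0)\|_{l^2}$, and iterating this from $k$ down to $0$ gives $\|\hat e^k(0)\|_{l^2}\le \rho^{\,k}\,\|\hat e^0(0)\|_{l^2}$.

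Next I would control the whole time interval and pass back to the spatial norm. From \eqref{erreq1} we have $|\hat e^k(t)|=e^{-\omega^4 t}|\hat e^k(0)|\le |\hat e^k(0)|$ for all $t\in[0,T]$ and all $\omega$, so $\sup_{t\in[0,T]}\|\hat e^k(t)\|_{l^2}=\|\hat e^k(0)\|_{l^2}$, the supremum being attained at $t=0$; combined with the previous step this gives $\sup_{t\in[0,T]}\|\hat e^k(t)\|_{l^2}\le \rho^{\,k}\|\hat e^0(0)\|_{l^2}$. Since the spatial Fourier transform is an $L^2$-isometry, Parseval's identity gives $\|e^k(\cdot,t)\|_{L^2(\Omega)}=\|\hat e^k(\cdot,t)\|_{l^2}$ for each $t$; taking the supremum over $t$ turns the left side into $\|e^k\|_{L^\infty(0,T;L^2(\Omega))}$, while on the right side $\|\hat e^0(0)\|_{l^2}=\|e^0(\cdot,0)\|_{L^2(\Omega)}\le \|e^0\|_{L^\infty(0,T;L^2(\Omega))}$. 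Assembling these inequalities produces exactly the claimed bound with rate $\rho=|\alpha|e^{-\omega_{\min}^4T}/(1-|\alpha|e^{-\omega_{\min}^4T})$.

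The monotonicity of $g$ and the Parseval step are routine; the only point that genuinely needs care is the passage from the mode-wise inequality in \eqref{erreq2} to the $l^2$-in-frequency inequality, since one must take the supremum of the frequency-dependent factor over the actual spectrum — this is precisely where $\omega_{\min}$, rather than an arbitrary $\omega$, must enter. A secondary subtlety is the interpretation of $\omega_{\min}$ for the homogeneous Neumann problem, where the constant mode $\omega=0$ is present and $\rho$ reduces to $|\alpha|/(1-|\alpha|)$; to have genuine contraction one then additionally needs $|\alpha|<1/2$, so I would either restrict attention to $\omega_{\min}>0$ or simply state the geometric estimate as above without asserting $\rho<1$ from $|\alpha|<1$ alone.
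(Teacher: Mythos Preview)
Your argument is correct and follows essentially the same route as the paper's own proof: both use the mode-wise solution \eqref{erreq1} to bound $\sup_t\|\hat e^k(t)\|_{l^2}$ by $\|\hat e^k(0)\|_{l^2}$, iterate the recursion \eqref{erreq2} after maximizing the contraction factor via the monotonicity of $g$, and invoke Parseval--Plancherel to return to $L^2(\Omega)$. Your write-up is in fact more careful than the paper's, particularly in flagging the $\omega_{\min}=0$ issue for the Neumann problem (where $\rho$ degenerates to $|\alpha|/(1-|\alpha|)$ and one really needs $|\alpha|<1/2$), a point the paper only imposes later at the semi-discrete and fully discrete levels.
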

\begin{proof}
From \eqref{erreq1} we have $\max\limits_{t\in[0, T]}\parallel\hat e^k(t) \parallel_{l^2} \leq \parallel\hat e^k(0) \parallel_{l^2}$, then using \eqref{erreq2}
and Parseval-Plancherel identity  we get
\[
\max\limits_{t\in[0, T]}\parallel e^k(t) \parallel_{L^2} \leq \left( \frac{\vert\alpha\vert e^{-\omega^4 T}}{1-\vert\alpha\vert e^{-\omega^4 T}} \right)^k  \parallel e^0(0) \parallel_{L^2}.
\]
Now by observing that the function $g_1(y) = \frac{\vert\alpha\vert e^{-y}}{1-\vert\alpha\vert e^{-y}}$ is monotonic decreasing, result follows immediately.
\end{proof}
\subsubsection{Error estimate at Semi-discrete level}
We perform the error analysis at semi-discrete level, to see the dependency on spatial mesh size $h$. Let $\textbf{e}^k(t)=\textbf{u}^k(t) - \textbf{u}(t)$ be the error function at each PinT iteration. Then from semi-discrete version of equation \eqref{modelproblem1a} and \eqref{semi-discrete1} we have the following error equation at semi-continuous level 
\begin{equation}\label{semi-discrete1_err}
  \begin{cases}
    \dot{\textbf{e}}^k(t) + A\textbf{e}^k(t) =0, \\
    \textbf{e}^k(0)=\alpha (\textbf{e}^k(T) - \textbf{e}^{k-1}(T)).
  \end{cases}
\end{equation}
On solving \eqref{semi-discrete1_err} we have 
\begin{equation}\label{erreq1_semidis}
 \textbf{e}^k(t) = \textbf{e}^{-A t}  \textbf{e}^k(0).
\end{equation}
By letting $t=T$ in \eqref{erreq1_semidis} and substitute in $\textbf{e}^k(0)=\alpha (\textbf{e}^k(T) - \textbf{e}^{k-1}(T))$, we obtain
\begin{equation}\label{erreq2_semidis}
 \textbf{e}^k(0)= \frac{-\alpha \textbf{e}^{-A T}}{1-\alpha \textbf{e}^{-A T}} \textbf{e}^{k-1}(0).
\end{equation}
Before analyzing further we first discuss spectral distribution of $A$. Eigenvalues of $\Delta_h$ in \eqref{deltah} are 
\[
\lambda_p=\frac{2}{h^2}\left\lbrace \cos\left( \frac{(p-1)\pi}{N_x -1}\right)-1 \right\rbrace, p=1,\cdots , N_x. 
\]
These $\lambda_p$'s are distinct and satisfy $\lambda_p\leq 0, \forall p$. Hence the eigenvalues of $A$ are $\lambda_p^2$, distinct and the spectrum of $A$, $\sigma(A)\subset[0, \infty)$. Thus the matrix $A$ is diagonalizable as 
\[
A=PDP^{-1}, D=\diag(\lambda_1^2, \lambda_2^2,\cdots, \lambda_{N_x}^2),
\]
where $P$ is made of eigenvectors corresponding to $\lambda_i^2$. Then for any vector norm $\parallel \bullet\parallel$ we have from \eqref{erreq2_semidis}
\[
\parallel P\textbf{e}^k(0)\parallel \leq \max\limits_{z\in\sigma (AT)}W(z)\parallel P\textbf{e}^{k-1}(0)\parallel,\; W(z)=\frac{\vert\alpha e^{-z}\vert}{\vert1-\alpha e^{-z}\vert}.
\]
Now from \eqref{erreq1_semidis} we have $\max\limits_{t\in[0, T]}\parallel \textbf{e}^k(t)\parallel\leq \parallel \textbf{e}^k(0)\parallel$, hence it is sufficient to analyze the behaviour of the function $W(z)$ to capture the error contraction. As $\sigma(A)\subset[0, \infty)$ we have $z\in [0, \infty)$ and the function $W(z)$ is monotonic decreasing for $z\in [0, \infty)$. The convergence result given in the following theorem.
\begin{theorem}[Error estimate at Semi-discrete level]\label{thm2}
Let $\textbf{u}^k$ be the $k-$th iteration of the PinT algorithm \eqref{semi-discrete1} with $\vert\alpha\vert<1/2$. Then we have the following linear convergence estimate \[
\parallel P\textbf{e}^k \parallel_{L^{\infty}(0, T;L^{\infty}(\Omega))}\leq \left(\frac{\vert\alpha\vert}{1-\vert\alpha\vert} \right) ^k \parallel P\textbf{e}^0\parallel_{L^{\infty}(0, T;L^{\infty}(\Omega))}.\]  
\end{theorem}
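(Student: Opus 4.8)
The plan is to reproduce, at the semi-discrete level, the argument used for Theorem~\ref{thm1}, with the Fourier diagonalization replaced by the spectral decomposition $A=PDP^{-1}$ recorded just above the statement. Phrasing the estimate in terms of $P\textbf{e}^{k}$ rather than $\textbf{e}^{k}$ is exactly what lets us avoid carrying the (possibly large) condition number of the eigenvector matrix $P$, so I would keep $P\textbf{e}^{k}$ as the basic object throughout.

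The first step is to pass \eqref{erreq2_semidis} to the eigen-coordinates of $A$. Since $\dfrac{-\alpha\,\textbf{e}^{-AT}}{1-\alpha\,\textbf{e}^{-AT}}$ is a well-defined function of $A$ — well-defined because $|\alpha\,e^{-\lambda_j^2 T}|\le|\alpha|<1$ for every eigenvalue $\lambda_j^2\ge 0$ of $A$ — it is simultaneously diagonalized by $P$, so the recursion for $P\textbf{e}^{k}(0)$ decouples into $N_x$ scalar recursions whose $j$-th multiplier has modulus $W(\lambda_j^2 T)$ with $W(z)=\dfrac{|\alpha e^{-z}|}{|1-\alpha e^{-z}|}$. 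Taking the discrete $L^{\infty}(\Omega)$ (max) vector norm, and using that the induced operator norm of a diagonal matrix in this norm is the largest modulus of its diagonal entries, gives the one-step bound $\|P\textbf{e}^{k}(0)\|_{\infty}\le\bigl(\max_{1\le j\le N_x}W(\lambda_j^2T)\bigr)\,\|P\textbf{e}^{k-1}(0)\|_{\infty}$.

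The second step is to evaluate that maximum. From the explicit eigenvalues $\lambda_p=\tfrac{2}{h^2}\{\cos(\tfrac{(p-1)\pi}{N_x-1})-1\}$ one has $\lambda_1=0$, hence $0\in\sigma(A)$ and $0=\min_j\lambda_j^2T$; since $W$ is monotone decreasing on $[0,\infty)$, the maximum is attained at $z=0$, and $W(0)=\dfrac{|\alpha|}{|1-\alpha|}\le\dfrac{|\alpha|}{1-|\alpha|}$, which is strictly less than $1$ precisely when $|\alpha|<1/2$ — this is the source of the hypothesis. Iterating the one-step bound from index $k$ down to $0$ then yields $\|P\textbf{e}^{k}(0)\|_{\infty}\le\bigl(\tfrac{|\alpha|}{1-|\alpha|}\bigr)^{k}\|P\textbf{e}^{0}(0)\|_{\infty}$.

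Finally I would promote this from the initial time to the whole interval: from \eqref{erreq1_semidis}, $P\textbf{e}^{k}(t)=\textbf{e}^{-Dt}P\textbf{e}^{k}(0)$, and since $\sigma(A)\subset[0,\infty)$ the diagonal matrix $\textbf{e}^{-Dt}$ has entries in $(0,1]$, so $\|P\textbf{e}^{k}(t)\|_{\infty}\le\|P\textbf{e}^{k}(0)\|_{\infty}$ for all $t\in[0,T]$; taking the supremum over $t$ on the left and bounding $\|P\textbf{e}^{0}(0)\|_{\infty}\le\sup_{t\in[0,T]}\|P\textbf{e}^{0}(t)\|_{\infty}$ on the right gives the claimed estimate. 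The computation is essentially routine once the spectral picture is in place; the only genuine point of care is the step identifying $\max_j W(\lambda_j^2T)=W(0)=|\alpha|/|1-\alpha|$ and then replacing it by $|\alpha|/(1-|\alpha|)$, which simultaneously forces the sharp restriction $|\alpha|<1/2$ — and here one must observe that $0$ really is an eigenvalue of $A$, so the worst case $z=0$ is attained and cannot be discarded, while the well-definedness of $(1-\alpha\,\textbf{e}^{-AT})^{-1}$ on the zero eigenspace again follows from $|\alpha|<1$.
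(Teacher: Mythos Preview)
Your proposal is correct and follows essentially the same route as the paper: the paper's proof simply refers back to the discussion preceding the theorem, which is exactly the spectral-decomposition argument you spell out (diagonalize $A=PDP^{-1}$, bound the one-step contraction by $\max_{z\in\sigma(AT)}W(z)$, use monotonicity of $W$ on $[0,\infty)$, and promote from $t=0$ to the whole interval via \eqref{erreq1_semidis}). Your write-up is in fact more explicit on two points the paper leaves implicit --- that $0\in\sigma(A)$ so the worst case $W(0)$ is actually attained, and the passage from $|\alpha|/|1-\alpha|$ to $|\alpha|/(1-|\alpha|)$ --- but the underlying argument is identical.
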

\begin{proof}
Proof of the theorem can be realized from the above discussion.
\end{proof}
\subsubsection{Error estimate at fully discrete level}
We now present the error analysis at fully discrete level, to see the dependency on temporal mesh size $\Delta t$ and the time integrators. Let $\textbf{e}_n^k=\textbf{u}_n^k - \textbf{u}_n$ be the error vector at each PinT iteration. Then for linear $\theta-$method we have 
\begin{equation}\label{linear-theta1_err}
  \begin{cases}
    (I_x + \theta\Delta t A)\textbf{e}_n^k = (I_x - (1-\theta)\Delta t A)\textbf{e}_{n-1}^k, & n=1,2,\cdots, N_t \\
    \textbf{e}_{0}^k=\alpha (\textbf{e}_{N_t}^k -\textbf{e}_{N_t}^{k-1}).
  \end{cases}
\end{equation}
Using recurrence relation in \eqref{linear-theta1_err} we have 
\begin{equation}\label{linear-theta1_err2}
  \begin{cases}
    \textbf{e}_{N_t}^k = \left(R_{\theta}(\Delta t A)\right)^{N_t}\textbf{e}_{0}^k, & R_{\theta}(\Delta t A)=(I_x + \theta\Delta t A)^{-1}(I_x - (1-\theta)\Delta t A) \\
    \textbf{e}_{0}^k=\alpha (\textbf{e}_{N_t}^k -\textbf{e}_{N_t}^{k-1}).
  \end{cases}
\end{equation}
Then similar to the analysis of semi-discrete case we have 
\[
\parallel P\textbf{e}^k(0)\parallel \leq \max\limits_{z\in\sigma (\Delta t A)}\widetilde{W}(z)\parallel P\textbf{e}^{k-1}(0)\parallel,\; \widetilde{W}(z)=\frac{\vert\alpha R_{\theta}^{N_t}(z)\vert}{\vert1-\alpha R_{\theta}^{N_t}(z)\vert},
\]
where the matrix $P$ as described earlier. So analyzing $\widetilde{W}(z)$ is sufficient to see the error contraction. For $\theta=1$, we have $R_1(z)=\frac{1}{1+z}$ for $z\in[0, \infty)$, which is a decreasing function in $z$. The function $g_2(y)=\frac{\vert\alpha\vert y}{1-\vert\alpha\vert y}$ is monotonic increasing. Hence $\max\limits_{z\in\sigma (\Delta t A)}\widetilde{W}(z) \leq g_2(R_1(0))$ for backward-Euler method. For $\theta=1/2$, we have $R_{1/2}(z)=\frac{1-\frac{z}{2}}{1+\frac{z}{2}}$ for $z\in[0, \infty)$, which is a decreasing function in $z$. Similarly we have $\max\limits_{z\in\sigma (\Delta t A)}\widetilde{W}(z) \leq g_2(R_{1/2}(0))$ for the Trapezoidal rule.
\begin{theorem}[Error estimate at fully discrete level]\label{thm3}
For $\theta=1$ and $\theta=1/2$ the discrete error $\textbf{e}_n^k$ at $k-$th iterate of the PinT algorithm \eqref{linear-theta1} with $\vert\alpha\vert<1/2$ satisfies the following linear convergence estimate 
\[
\max\limits_{n=1,2,\cdots,N_t}\parallel P\textbf{e}_n^k \parallel\leq \left(\frac{\vert\alpha\vert}{1-\vert\alpha\vert} \right) ^k \parallel P\textbf{e}_0^0 \parallel.\]   
\end{theorem}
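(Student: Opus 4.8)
The plan is to mirror, at the fully discrete level, exactly the argument already carried out for the semi-discrete case, replacing the matrix exponential $\textbf{e}^{-AT}$ by the amplification operator $\bigl(R_\theta(\Delta t A)\bigr)^{N_t}$. First I would start from the recurrence \eqref{linear-theta1_err2}, substitute $\textbf{e}_{N_t}^k = \bigl(R_\theta(\Delta t A)\bigr)^{N_t}\textbf{e}_0^k$ into the coupling relation $\textbf{e}_0^k = \alpha(\textbf{e}_{N_t}^k - \textbf{e}_{N_t}^{k-1})$, and solve for $\textbf{e}_0^k$ to obtain
\[
\textbf{e}_0^k = -\alpha\bigl(I_x - \alpha R_\theta^{N_t}(\Delta t A)\bigr)^{-1} R_\theta^{N_t}(\Delta t A)\,\textbf{e}_0^{k-1},
\]
which is the discrete analogue of \eqref{erreq2_semidis}. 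Then, diagonalizing $A = PDP^{-1}$ as in the semi-discrete discussion, I would apply $P$ to both sides and pass to the scalar estimate $\parallel P\textbf{e}_0^k\parallel \le \max_{z\in\sigma(\Delta t A)}\widetilde{W}(z)\,\parallel P\textbf{e}_0^{k-1}\parallel$ with $\widetilde{W}(z) = \vert\alpha R_\theta^{N_t}(z)\vert/\vert 1-\alpha R_\theta^{N_t}(z)\vert$, exactly as stated in the excerpt.

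Next I would bound $\max_{z\in\sigma(\Delta t A)}\widetilde{W}(z)$. Since $\sigma(A)\subset[0,\infty)$, we have $z = \Delta t\lambda \in [0,\infty)$. Writing $\widetilde{W} = g_2(R_\theta^{N_t}(z))$ with $g_2(y) = \vert\alpha\vert y/(1-\vert\alpha\vert y)$ monotone increasing on its domain, it suffices to bound $R_\theta^{N_t}(z)$. For $\theta = 1$, $R_1(z) = 1/(1+z)$ is decreasing on $[0,\infty)$ with $R_1(0) = 1$, so $0 \le R_1(z) \le 1$ and hence $R_1^{N_t}(z)\le 1$; for $\theta = 1/2$, $R_{1/2}(z) = (1-z/2)/(1+z/2)$ satisfies $\vert R_{1/2}(z)\vert \le 1$ on $[0,\infty)$, so again $\vert R_{1/2}^{N_t}(z)\vert\le 1$. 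In both cases $\widetilde{W}(z) \le g_2(1) = \vert\alpha\vert/(1-\vert\alpha\vert)$, using $\vert\alpha\vert < 1/2$ to keep $1-\vert\alpha\vert > \vert\alpha\vert\ge \vert\alpha\vert R_\theta^{N_t}$ so the denominator stays positive and $g_2$ is applied within its increasing regime. This gives the one-step contraction $\parallel P\textbf{e}_0^k\parallel \le \frac{\vert\alpha\vert}{1-\vert\alpha\vert}\parallel P\textbf{e}_0^{k-1}\parallel$, and iterating $k$ times yields $\parallel P\textbf{e}_0^k\parallel \le \bigl(\frac{\vert\alpha\vert}{1-\vert\alpha\vert}\bigr)^k\parallel P\textbf{e}_0^0\parallel$.

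Finally, to convert the bound on the initial error into the bound on $\max_n \parallel P\textbf{e}_n^k\parallel$ claimed in the theorem, I would note that from \eqref{linear-theta1_err2} each $\textbf{e}_n^k = \bigl(R_\theta(\Delta t A)\bigr)^n\textbf{e}_0^k$, and since $P R_\theta(\Delta t A) P^{-1} = R_\theta(D) = \diag(R_\theta(\Delta t\lambda_p^2))$ with every entry bounded by $1$ in modulus (as shown above), we get $\parallel P\textbf{e}_n^k\parallel \le \parallel P\textbf{e}_0^k\parallel$ for the appropriate vector norm, hence $\max_n\parallel P\textbf{e}_n^k\parallel \le \parallel P\textbf{e}_0^k\parallel$. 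Combining with the iterated contraction completes the proof. I expect the only mild subtlety to be the bookkeeping that keeps $g_2$ within its monotone-increasing domain — that is, verifying $\vert\alpha\vert R_\theta^{N_t}(z) < 1$ uniformly — which is precisely why the hypothesis $\vert\alpha\vert < 1/2$ (rather than merely $\vert\alpha\vert < 1$, as at the continuous level) is imposed; everything else is a direct transcription of the semi-discrete argument with $R_\theta^{N_t}$ in place of $e^{-AT}$.
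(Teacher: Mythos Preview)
Your proposal is correct and follows essentially the same route as the paper: it replaces $e^{-AT}$ in the semi-discrete argument by $R_\theta^{N_t}(\Delta t A)$, diagonalizes via $P$, reduces to bounding $\widetilde{W}(z)=\vert\alpha R_\theta^{N_t}(z)\vert/\vert1-\alpha R_\theta^{N_t}(z)\vert$ on $\sigma(\Delta t A)\subset[0,\infty)$, and uses $\vert R_\theta(z)\vert\le 1$ together with the monotonicity of $g_2$ to obtain $\widetilde{W}(z)\le g_2(1)=\vert\alpha\vert/(1-\vert\alpha\vert)$. Your treatment of the last step (passing from $\parallel P\textbf{e}_0^k\parallel$ to $\max_n\parallel P\textbf{e}_n^k\parallel$ via $\parallel P\textbf{e}_n^k\parallel\le\parallel P\textbf{e}_0^k\parallel$) and your handling of the $\theta=1/2$ case through $\vert R_{1/2}(z)\vert\le 1$ are in fact slightly more explicit than the paper's discussion, but the argument is the same.
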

\begin{proof}
Proof of the theorem can be realized from the above discussion.
\end{proof}
\remark
We can extend the above analysis for 2D and 3D at each level.
\begin{enumerate}
\item In 2D, Theorem \ref{thm1} will also hold with the convergence factor $ \frac{\vert\alpha\vert e^{-(\omega_{\min}^4 +\xi_{\min}^4) T}}{1-\vert\alpha\vert e^{-(\omega_{\min}^4+\xi_{\min}^4) T}} $, where $\omega_{\min}$ and $\xi_{\min}$ are the lowest Fourier frequency for spatial variables. Similarly one can write the convergence factor in 3D.  
\item
The spectrum of two dimensional discrete Laplacian $I\otimes\Delta_h + \Delta_h\otimes I$ with homogeneous  Neumann boundary condition and equidistant grid points in both spatial direction 
is given by 
\[
\lambda_{p,q}=\frac{2}{h^2}\left\lbrace \cos\left( \frac{(p-1)\pi}{N_x -1}\right)-1 \right\rbrace + \frac{2}{h^2}\left\lbrace \cos\left( \frac{(q-1)\pi}{N_x -1}\right)-1 \right\rbrace, p,q=1,\cdots , N_x, 
\]
where $\Delta_h$ is one dimensional discrete Laplacian given in \eqref{deltah}. Hence one can observe that the spectral property remains same for $A$ in 2D, so Theorem \ref{thm2} holds true in 2D.
\item Theorem \ref{thm3} also depends on spectral distribution of $A$, hence it also holds in 2D.
\end{enumerate}
\section{Linearised CH Equation}\label{Section3}
In this section we consider the linearised CH equation in 1D as given below
\begin{equation}\label{modelproblem2a} 
\begin{cases}
    u_t =-\beta^2\Delta u - \epsilon^2\Delta^2u, & (x,t)\in\Omega\times(0,T]\\
    \partial_{\nu}u=0= \partial_{\nu}(\Delta u), & (x,t)\in\partial\Omega\times(0,T]\\
    u(x,0)=u_0, & x\in\Omega.
\end{cases}
\end{equation}
The PinT algorithm for \eqref{modelproblem2a}, is based on WR iteration and \textit{periodic-like} initial condition, is the following 
\begin{equation}\label{modelproblem2a_wr} 
\begin{cases}
    u_t^k =-\beta^2\Delta u^k-\epsilon^2\Delta^2u^k, & (x,t)\in\Omega\times(0,T],\\
   u^k(x,0)=\alpha u^k(x,T)-\alpha u^{k-1}(x,T) +u_0(x) , & x\in\Omega,\\
\end{cases}
\end{equation}
where $k\geq 1$ is the WR iteration index and $\alpha\in(0, 1)$ is the PinT free parameter. The problem \eqref{modelproblem2a} is linear, so the discretization and  PinT formulation at semi-discrete level and fully-discrete level will follow from \eqref{semi-discrete1} and \eqref{linear-theta1} respectively by taking $A=\beta^2\Delta_h + \epsilon^2\Delta^2_h \in\mathbb{R}^{N_x \times N_x}$, where $\Delta_h$ as defined in \eqref{deltah}. Thus the PinT algorithm for \eqref{modelproblem2a} is given by \eqref{pintiter1} for above prescribed $A$. Thus we will discuss only the convergence behaviour in the next section.
\subsection{Convergence analysis}
We present the convergence result for \eqref{modelproblem2a} at different level in 1D, procedure of getting error estimate is same as process given in subsection \ref{subsection1}.
\begin{theorem}[Error estimate at continuous level]\label{thm4}
Let $\{u^k\}_{k\geq 1}$ be the functions generated by PinT method \eqref{modelproblem2a_wr}, with $\vert\alpha\vert<1/2$. Then, the error function $e^k(x,t)=u^k(x,t)-u(x,t)$ satisfies the following linear convergence estimate
\[
\parallel e^k \parallel_{L^{\infty}(0, T;L^2(\Omega))} \leq \rho^k  \parallel e^0 \parallel_{L^{\infty}(0, T;L^2(\Omega))},
\]
where the convergence factor $\rho$ is given by
\begin{equation*}
\rho^k=
\begin{cases}
    e^{\widetilde{g}(z^*)} \left( \frac{\vert\alpha\vert e^{\widetilde{g}(z^*)}}{1-\vert\alpha\vert e^{\widetilde{g}(z^*)}}\right)^k , & \text{for}\; z<\frac{\beta^2 \sqrt{T}}{\epsilon^2},\\
   \left( \frac{\vert\alpha\vert}{1-\vert\alpha\vert}\right)^k  , & \text{for}\; z\geq\frac{\beta^2 \sqrt{T}}{\epsilon^2},\\
\end{cases}
\end{equation*}
with $\widetilde{g}(z)=-\epsilon^2 z^2 + \beta^2 \sqrt{T}z,\; z^*=\frac{\beta^2 \sqrt{T}}{2\epsilon^2}$. 
\end{theorem}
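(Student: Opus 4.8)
The plan is to mirror the continuous-level argument from Theorem~\ref{thm1} (subsection~\ref{subsection1}), with the only new ingredient being that the symbol of the spatial operator now has an indefinite-looking lower-order piece. First I would take the Fourier transform in space of the error equation associated with \eqref{modelproblem2a_wr}; writing $\hat e^k(t)$ for the $\omega$-th Fourier mode, the ODE becomes $\hat e_t^k = -(\epsilon^2\omega^4 - \beta^2\omega^2)\hat e^k$ with the \emph{periodic-like} coupling $\hat e^k(0) = \alpha(\hat e^k(T) - \hat e^{k-1}(T))$. Solving gives $\hat e^k(t) = e^{-(\epsilon^2\omega^4-\beta^2\omega^2)t}\,\hat e^k(0)$, so that $\|\hat e^k(T)\|_{l^2}\le e^{-(\epsilon^2\omega^4-\beta^2\omega^2)T}\|\hat e^k(0)\|_{l^2}$. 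Substituting twice into the coupling, exactly as in \eqref{erreq2}, yields
\[
\|\hat e^k(0)\|_{l^2} \le \frac{|\alpha|\,e^{-(\epsilon^2\omega^4-\beta^2\omega^2)T}}{1-|\alpha|\,e^{-(\epsilon^2\omega^4-\beta^2\omega^2)T}}\,\|\hat e^{k-1}(0)\|_{l^2},
\]
provided the denominator is positive; since $|\alpha|<1/2$ this holds as long as $e^{-(\epsilon^2\omega^4-\beta^2\omega^2)T}\le 2$, which I will need to verify is automatic (it is, because the exponent, though possibly negative, is bounded below — see the next step).

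The heart of the matter is controlling the scalar multiplier over all admissible frequencies. Set $z = \sqrt{T}\,\omega^2$, so the exponent becomes $-(\epsilon^2\omega^4-\beta^2\omega^2)T = -\epsilon^2 z^2 + \beta^2\sqrt{T}\,z =: \widetilde g(z)$, matching the function named in the statement. I would study $\widetilde g$ on $[0,\infty)$: it is a downward parabola in $z$ with unique maximum at $z^* = \beta^2\sqrt{T}/(2\epsilon^2)$ and maximal value $\widetilde g(z^*) = \beta^4 T/(4\epsilon^2)>0$; moreover $\widetilde g(z)\le 0$ for $z\ge \beta^2\sqrt{T}/\epsilon^2$. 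Because $t\mapsto \frac{|\alpha| t}{1-|\alpha| t}$ (the function $g_2$ from the fully-discrete analysis, or $g_1$ composed appropriately) is monotone increasing on its domain, the worst-case contraction factor over a given frequency range is obtained by plugging in the largest value of $e^{\widetilde g(z)}$ attained on that range. This produces exactly the two-branch formula: when the relevant frequencies include $z^*$ (the case $z<\beta^2\sqrt{T}/\epsilon^2$ in the statement), the supremum of $e^{\widetilde g}$ is $e^{\widetilde g(z^*)}$, giving the factor $e^{\widetilde g(z^*)}\bigl(\tfrac{|\alpha|e^{\widetilde g(z^*)}}{1-|\alpha|e^{\widetilde g(z^*)}}\bigr)$ per iteration; when only $z\ge \beta^2\sqrt{T}/\epsilon^2$ occurs, $\widetilde g\le 0$, so $e^{\widetilde g}\le 1$ and the factor degenerates to $\tfrac{|\alpha|}{1-|\alpha|}<1$. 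The extra leading $e^{\widetilde g(z^*)}$ in the first branch comes from bounding $\|\hat e^k(t)\|_{l^2}\le e^{\widetilde g(z)/\sqrt{T}\cdot t}\|\hat e^k(0)\|$ over $t\in[0,T]$ — unlike the biharmonic case this is no longer bounded by $\|\hat e^k(0)\|$ since the symbol can be negative, so one picks up the factor $\sup_t\sup_z e^{\widetilde g(z)t/T}=e^{\widetilde g(z^*)}$.

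From there the proof closes just as Theorem~\ref{thm1} does: iterate \eqref{erreq2}-style the inequality $k$ times, apply the Parseval--Plancherel identity to pass from $\|\hat e^k\|_{l^2}$ back to $\|e^k\|_{L^2(\Omega)}$, and take the supremum over $t\in[0,T]$ to land in $L^\infty(0,T;L^2(\Omega))$. The main obstacle — really the only subtlety beyond bookkeeping — is the non-monotone symbol: I must check that $|\alpha|e^{\widetilde g(z^*)}<1$ so that the denominator $1-|\alpha|e^{\widetilde g(z^*)}$ stays positive and the per-step factor is meaningful; this is precisely why the hypothesis here is $|\alpha|<1/2$ rather than $|\alpha|<1$, and it will require the implicit assumption that the parameters satisfy $e^{\beta^4 T/(4\epsilon^2)}<2$, i.e. a compatibility condition linking $\alpha$, $\beta$, $\epsilon$ and $T$ that I would state explicitly (or absorb into the smallness of $\alpha$). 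Everything else is a direct transcription of the $\theta$-method / Fourier machinery already set up for \eqref{modelproblem1a}.
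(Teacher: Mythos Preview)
Your proposal is correct and follows essentially the same route as the paper: Fourier transform in space, solve the mode-wise ODE, substitute into the periodic-like coupling to obtain the recurrence \eqref{erreq4}, introduce the change of variable $z=\omega^2\sqrt{T}$ to define $\widetilde g$, locate its maximum at $z^*$, and then use monotonicity of $y\mapsto \frac{|\alpha|e^y}{1-|\alpha|e^y}$ together with Parseval--Plancherel to conclude. Your additional remark that the bound is only meaningful when $|\alpha|e^{\widetilde g(z^*)}<1$ (a compatibility between $\alpha,\beta,\epsilon,T$ not covered by $|\alpha|<1/2$ alone) is a valid observation that the paper leaves implicit.
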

\begin{proof}
Following the continuous error analysis in Subsection \ref{subsection1}, we have for \eqref{modelproblem2a_wr}
\begin{equation}\label{erreq3}
\hat e^k(t) = e^{\left( \beta^2\omega^2-\epsilon^2\omega^4 \right) t} \hat e^k(0),
\end{equation}
\begin{equation}\label{erreq4}
\parallel\hat e^k(0)\parallel_{l^2}\leq \frac{\vert\alpha\vert e^{\left( \beta^2\omega^2-\epsilon^2\omega^4 \right) T}}{1-\vert\alpha\vert e^{\left( \beta^2\omega^2-\epsilon^2\omega^4 \right) T}} \parallel\hat e^{k-1}(0)\parallel_{l^2},
\end{equation}
where $\omega$ is a Fourier variable. Observe that the term $\beta^2\omega^2-\epsilon^2\omega^4$ is not always less than zero. By that we mean for small frequency mode $\omega$ the term $\beta^2\omega^2-\epsilon^2\omega^4$ is grater than zero, given the barrier on the parameter $\beta, \epsilon$. Considering above fact, we define $\widetilde{g}(z)=-\epsilon^2 z^2 + \beta^2 \sqrt{T}z$, where $z=\omega^2 \sqrt{T}>0$. The function $\widetilde{g}(z)>0$ for $z<\frac{\beta^2 \sqrt{T}}{\epsilon^2}$, and having maxima at $z=z^*=\frac{\beta^2 \sqrt{T}}{2\epsilon^2}$. The function $\widetilde{g}(z)\leq 0$ for $z\geq\frac{\beta^2 \sqrt{T}}{\epsilon^2}$. Finally we get the error estimate by  
using Parseval-Plancherel identity on the error solution \eqref{erreq3} and recurrence relation \eqref{erreq4}, and 
noticing that the function $g_3(y)=\frac{\vert\alpha\vert e^{y}}{1-\vert\alpha\vert e^{y}}$ is monotonic increasing.
\end{proof}

\begin{theorem}[Error estimate at semi-discrete level]\label{thm5}
Let $\textbf{u}^k$ be the $k-$th semi-discrete iteration of the PinT algorithm \eqref{modelproblem2a_wr} with $\vert\alpha\vert<1/2$, and set $\textbf{e}^k(t)=\textbf{u}^k(t)- \textbf{u}(t)$ be the error at $k-$th iterate. Then we have the following linear convergence estimate \[
\parallel P\textbf{e}^k \parallel_{L^{\infty}(0, T;L^{\infty}(\Omega))}\leq \rho^k \parallel P\textbf{e}^0\parallel_{L^{\infty}(0, T;L^{\infty}(\Omega))},\]
where the convergence factor $\rho$ is given by
\begin{equation*}
\rho^k=
\begin{cases}
    e^{-\widecheck{g}(z^*)} \left( \frac{\vert\alpha\vert e^{-\widecheck{g}(z^*)}}{1-\vert\alpha\vert e^{-\widecheck{g}(z^*)}}\right)^k , & \text{for}\; z>-\frac{\beta^2 T}{\epsilon^2},\\
   \left( \frac{\vert\alpha\vert}{1-\vert\alpha\vert}\right)^k  , & \text{for}\; z\leq -\frac{\beta^2 T}{\epsilon^2},\\
\end{cases}
\end{equation*}
with $\widecheck{g}(z)=\frac{\epsilon^2}{T} z^2 + \beta^2 z,\; z^*=-\frac{\beta^2 T}{2\epsilon^2}$.  
\end{theorem}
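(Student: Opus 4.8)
The plan is to replay the semi-discrete error analysis of Subsection~\ref{subsection1} (the one behind Theorem~\ref{thm2}) with the operator $A=\beta^2\Delta_h+\epsilon^2\Delta_h^2$, and then feed the resulting scalar symbol into the parabola analysis already performed for Theorem~\ref{thm4}. First I would subtract the discrete version of \eqref{modelproblem2a} from \eqref{modelproblem2a_wr} to get the semi-discrete error system $\dot{\textbf{e}}^k(t)+A\textbf{e}^k(t)=0$ with the \emph{periodic-like} coupling $\textbf{e}^k(0)=\alpha(\textbf{e}^k(T)-\textbf{e}^{k-1}(T))$; integrating gives $\textbf{e}^k(t)=\textbf{e}^{-At}\textbf{e}^k(0)$, and putting $t=T$ into the coupling produces the one-step recurrence $\textbf{e}^k(0)=-\alpha(I-\alpha\textbf{e}^{-AT})^{-1}\textbf{e}^{-AT}\textbf{e}^{k-1}(0)$, exactly in the shape of \eqref{erreq2_semidis}.

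Second I would diagonalize $A$. Since the eigenvalues of $\Delta_h$ from \eqref{deltah} are the distinct, nonpositive numbers $\lambda_p=\frac{2}{h^2}\{\cos(\frac{(p-1)\pi}{N_x-1})-1\}$, we get $A=PDP^{-1}$ with $D=\diag(\mu_1,\dots,\mu_{N_x})$ and $\mu_p=\beta^2\lambda_p+\epsilon^2\lambda_p^2$. The crucial bookkeeping is that $\mu_p=\widecheck g(z_p)$ for $z_p:=\lambda_p T\le 0$ and $\widecheck g(z)=\frac{\epsilon^2}{T}z^2+\beta^2 z$, which is precisely the function in the statement. Conjugating the recurrence by $P$ turns the matrix factor into a diagonal one with entries $\frac{-\alpha e^{-\mu_p T}}{1-\alpha e^{-\mu_p T}}$, so for the $\infty$-norm one obtains $\|P\textbf{e}^k(0)\|\le \max_p W(\mu_p T)\,\|P\textbf{e}^{k-1}(0)\|$ with $W(w)=\frac{|\alpha e^{-w}|}{|1-\alpha e^{-w}|}$, and similarly $\|P\textbf{e}^k(t)\|\le \big(\max_p\max_{t\in[0,T]}e^{-\mu_p t}\big)\,\|P\textbf{e}^k(0)\|$ from $\textbf{e}^k(t)=\textbf{e}^{-At}\textbf{e}^k(0)$.

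Third, I would run the elementary scalar analysis of $\widecheck g$, mirroring Theorem~\ref{thm4}. It is an upward parabola with vertex $z^*=-\frac{\beta^2 T}{2\epsilon^2}$ and minimum $\widecheck g(z^*)=-\frac{\beta^4 T}{4\epsilon^2}\le 0$, it is $\le 0$ exactly on $-\frac{\beta^2 T}{\epsilon^2}\le z\le 0$, and $>0$ for $z<-\frac{\beta^2 T}{\epsilon^2}$. Hence on the modes with $\mu_p=\widecheck g(z_p)\ge 0$ one has $\max_{t\in[0,T]}e^{-\mu_p t}\le 1$ and $e^{-\mu_p T}\le 1$, while on the modes with $\mu_p<0$ one has $\max_{t\in[0,T]}e^{-\mu_p t}=e^{-\mu_p T}\le e^{-\widecheck g(z^*)}$. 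Since $g_2(y)=\frac{|\alpha|y}{1-|\alpha|y}$ is increasing on its domain, $W(\mu_p T)\le g_2(1)=\frac{|\alpha|}{1-|\alpha|}$ in the first regime and $W(\mu_p T)\le g_2(e^{-\widecheck g(z^*)})=\frac{|\alpha|e^{-\widecheck g(z^*)}}{1-|\alpha|e^{-\widecheck g(z^*)}}$ in the second. Iterating the recurrence $k$ times, taking $\max_{t\in[0,T]}$, and retaining the extra $e^{-\widecheck g(z^*)}$ produced by the worst-case growth of $\textbf{e}^{-At}$ at $t=T$ then yields the two-case formula for $\rho^k$.

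The main obstacle, and the only genuine departure from Theorem~\ref{thm2}, is that $A$ is no longer positive semidefinite: the $\beta^2\Delta_h$ term drives the low modes of $A$ negative, so $\textbf{e}^{-At}$ may grow in $t$ and $I-\alpha\textbf{e}^{-AT}$ must still be shown invertible with a controlled inverse. Both issues are absorbed by the uniform bound $\widecheck g(z)\ge\widecheck g(z^*)=-\frac{\beta^4 T}{4\epsilon^2}$, which caps $e^{-\mu_p t}$ over $[0,T]$ and — provided $|\alpha|e^{-\widecheck g(z^*)}<1$, which one should check against the barrier $|\beta|\le 1/\sqrt3$ together with the smallness of $\alpha$ (and is otherwise the natural extra hypothesis) — keeps every argument $e^{-\mu_p T}$ inside the increasing branch of $g_2$, so that the monotonicity argument of Theorem~\ref{thm4} transfers verbatim; the extension to 2D is then immediate exactly as for Theorems~\ref{thm2}--\ref{thm3}, since only the spectral structure of $A$ is used.
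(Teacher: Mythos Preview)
Your proposal is correct and follows essentially the same route as the paper: derive the semi-discrete error recursion \eqref{erreq3_semidis}--\eqref{erreq4_semidis}, diagonalize $A=\beta^2\Delta_h+\epsilon^2\Delta_h^2$ via the simple eigenvalues $\mu_p=\beta^2\lambda_p+\epsilon^2\lambda_p^2$, reduce to the scalar symbol $W$, and then read off the two regimes from the parabola $\widecheck g$ exactly as in Theorem~\ref{thm4}; the paper phrases the monotonicity via $g_4(y)=\frac{|\alpha|e^{-y}}{1-|\alpha|e^{-y}}$ decreasing where you use the equivalent $g_2(y)=\frac{|\alpha|y}{1-|\alpha|y}$ increasing. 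One small bookkeeping slip: with $z_p=\lambda_pT$ one has $\widecheck g(z_p)=T\mu_p$, not $\mu_p$, though you use the correct quantity $\mu_pT$ everywhere it matters, so the argument is unaffected.
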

\begin{proof}
Following the semi-discrete error analysis in Subsection \ref{subsection1}, we have for the semi-discrete version of \eqref{modelproblem2a_wr}
\begin{equation}\label{erreq3_semidis}
 \textbf{e}^k(t) = \textbf{e}^{-A t}  \textbf{e}^k(0),
\end{equation}
\begin{equation}\label{erreq4_semidis}
 \textbf{e}^k(0)= \frac{-\alpha \textbf{e}^{-A T}}{1-\alpha \textbf{e}^{-A T}} \textbf{e}^{k-1}(0),
\end{equation}
where $A=\beta^2\Delta_h + \epsilon^2\Delta^2_h$. The eigenvalues of $A$ are $\mu_p=\beta^2\lambda_p + \epsilon^2\lambda_p^2$ for $p=1,2,\cdots N_x$, where $\lambda_p$ is as defined earlier. Clearly $\mu_p$'s are distinct, so $A$ is diagonalisable, and can be written as $A=PDP^{-1}$. Thus as earlier it is sufficient to analyze $\max\limits_{r\in\sigma (AT)}W(r)$, where $W(r)=\frac{\vert\alpha e^{-r}\vert}{\vert1-\alpha e^{-r}\vert}$. Now observe that $\mu_p$ is not always grater than or equal to zero for all $p$'s, given the condition on $\beta, \epsilon$. So we define $\widecheck{g}(z)=\frac{\epsilon^2}{T} z^2 + \beta^2 z$, where $z=\lambda_p T\leq 0$. The function $\widecheck{g}(z)<0$ for $z>-\frac{\beta^2 T}{\epsilon^2}$, having minima at $z=z^*=-\frac{\beta^2 T}{2\epsilon^2}$. The function $\widecheck{g}(z)\geq 0$ for $z\leq -\frac{\beta^2 T}{\epsilon^2}$. Now the result will follow using \eqref{erreq4_semidis} on \eqref{erreq3_semidis} and observing that the function $g_4(y)=\frac{\vert\alpha\vert e^{-y}}{1-\vert\alpha\vert e^{-y}}$ is monotonic decreasing.
\end{proof}
\begin{theorem}[Error estimate at fully discrete level]\label{thm6}
For $\theta=1$ and $\theta=1/2$ the fully discrete error $\textbf{e}_n^k=\textbf{u}_n^k -\textbf{u}_n$ at $k$-th iteration of the PinT algorithm \eqref{modelproblem2a_wr} with $\vert\alpha\vert<1/2$ satisfies the following linear convergence estimate 
\[
\max\limits_{n=1,2,\cdots,N_t}\parallel P\textbf{e}_n^k \parallel\leq \rho^k \parallel P\textbf{e}_0^0 \parallel,\; \rho=\frac{\vert\alpha\vert R_{\theta}^{N_t}(z^*)}{1-\vert\alpha\vert R_{\theta}^{N_t}(z^*)}.\]   
\end{theorem}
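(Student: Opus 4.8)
The plan is to mirror the fully discrete analysis already carried out in Subsection \ref{subsection1} for the biharmonic equation (Theorem \ref{thm3}), but now with $A=\beta^2\Delta_h+\epsilon^2\Delta_h^2$ as in Theorem \ref{thm5}, and to combine it with the frequency-splitting idea used in the proof of Theorem \ref{thm5}. First I would apply the linear $\theta$-method to the semi-discrete error equation, obtaining exactly the recurrences \eqref{linear-theta1_err}--\eqref{linear-theta1_err2} with this new $A$: namely $\textbf{e}_{N_t}^k=\bigl(R_\theta(\Delta t A)\bigr)^{N_t}\textbf{e}_0^k$ together with $\textbf{e}_0^k=\alpha(\textbf{e}_{N_t}^k-\textbf{e}_{N_t}^{k-1})$. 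Since the eigenvalues $\mu_p=\beta^2\lambda_p+\epsilon^2\lambda_p^2$ of $A$ are distinct, $A=PDP^{-1}$ is diagonalizable, so after multiplying by $P$ the recurrence decouples over eigenvalues $z\in\sigma(\Delta t A)$, and one is reduced to bounding $\widetilde W(z)=\dfrac{|\alpha R_\theta^{N_t}(z)|}{|1-\alpha R_\theta^{N_t}(z)|}$ over $z\in\sigma(\Delta t A)$, exactly as in the discussion preceding Theorem \ref{thm3}.

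Next I would analyze $R_\theta^{N_t}(z)$ on the relevant spectrum. The new feature is that $\sigma(\Delta t A)$ is no longer contained in $[0,\infty)$: for the low spatial frequencies the eigenvalue $\mu_p$ can be negative (this is precisely the instability that makes the linearized CH equation interesting), so $z$ ranges over an interval $[z_{\min},z_{\max}]$ with $z_{\min}$ possibly negative. For $\theta=1$, $R_1(z)=1/(1+z)$ and for $\theta=1/2$, $R_{1/2}(z)=(1-z/2)/(1+z/2)$; on the negative part of the spectrum $|R_\theta(z)|$ can exceed $1$, so $\bigl|R_\theta^{N_t}(z)\bigr|$ is maximized at the most negative eigenvalue. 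Writing this extreme eigenvalue as $z^\*$ (the image under $\Delta t A$ of the most negative $\lambda_p$, corresponding to the minimum of $\widecheck g$ in Theorem \ref{thm5}), monotonicity of $y\mapsto g_2(y)=\dfrac{|\alpha|y}{1-|\alpha|y}$ (already recorded before Theorem \ref{thm3}) gives $\max_{z\in\sigma(\Delta t A)}\widetilde W(z)\le g_2\!\bigl(|R_\theta^{N_t}(z^\*)|\bigr)=\dfrac{|\alpha|R_\theta^{N_t}(z^\*)}{1-|\alpha|R_\theta^{N_t}(z^\*)}=\rho$, which is exactly the stated factor. One also needs $|\alpha| R_\theta^{N_t}(z^\*)<1$ so that $g_2$ stays in its domain and $\rho<1$; the hypothesis $|\alpha|<1/2$ together with the standing bounds on $\beta,\epsilon$ (which control how negative $z^\*$ can be, hence how large $R_\theta^{N_t}(z^\*)$ is) is what makes this hold, and I would spell out that inequality. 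Finally, iterating the one-step contraction $k$ times and taking $\max_n$ using $\textbf{e}_n^k=\bigl(R_\theta(\Delta t A)\bigr)^n\textbf{e}_0^k$ together with the appropriate monotonicity/boundedness of $R_\theta^n$ yields the claimed estimate.

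I expect the main obstacle to be the precise control of $\bigl|R_\theta^{N_t}(z^\*)\bigr|$ on the negative part of the spectrum and verifying $|\alpha|R_\theta^{N_t}(z^\*)<1$. Unlike the biharmonic case, where $R_\theta(z)\in[0,1)$ for $z\ge0$ made everything transparent, here the stability function is evaluated at negative arguments where $R_\theta$ can grow, and raising it to the $N_t$-th power could in principle blow up; one must use that the spectral radius of $\Delta t A$ on the unstable modes is modest — bounded in terms of $\beta,\epsilon,T$ through $z^\*=\Delta t\cdot(\text{most negative }\mu_p)$ and the relation $z^\*\sim -\beta^2 T/(2\epsilon^2)$ appearing in Theorem \ref{thm5} — so that $R_\theta^{N_t}(z^\*)$ is a bounded constant and, combined with $|\alpha|<1/2$, keeps $\rho<1$. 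Making this quantitative (and checking the $\theta=1$ versus $\theta=1/2$ cases separately, since $|R_1|$ and $|R_{1/2}|$ behave differently for large negative $z$) is the delicate part; everything else is a direct transcription of the fully discrete argument already given for Theorem \ref{thm3}.
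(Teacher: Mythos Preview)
Your plan is essentially the paper's own argument: diagonalize $A$, reduce to bounding $\widetilde W(z)=\dfrac{|\alpha R_\theta^{N_t}(z)|}{|1-\alpha R_\theta^{N_t}(z)|}$ over the spectrum, locate the maximizer of $|R_\theta|$, and apply the monotonicity of $y\mapsto |\alpha|y/(1-|\alpha|y)$. The only substantive difference is bookkeeping of the spectral variable. The paper does \emph{not} take $z\in\sigma(\Delta t A)$; instead it sets $z=-\lambda_p\Delta t\ge 0$ (eigenvalues of $-\Delta_h$, not of $A$) and pushes the coefficients $\beta^2,\epsilon^2$ into the stability function, writing $R_1(z)=\dfrac{1}{1+\phi(z)}$ with $\phi(z)=-\beta^2 z+\tfrac{\epsilon^2}{\Delta t}z^2$ (and analogously $R_{1/2}(z)=\dfrac{1-\phi(z)/2}{1+\phi(z)/2}$). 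Then $R_1$ is increasing on $[0,\tfrac{\beta^2\Delta t}{2\epsilon^2}]$ and decreasing thereafter, so the maximum is at the explicit point $z^*=\tfrac{\beta^2\Delta t}{2\epsilon^2}$, and \emph{this} is the $z^*$ appearing in the statement. Your parametrization (keep the standard $R_\theta(z)=1/(1+z)$, allow $z<0$, take $z^*$ to be the most negative element of $\sigma(\Delta t A)$) lands on the same numerical value of $R_\theta^{N_t}(z^*)$ after translation, so the proof goes through; just be aware that your $z^*$ and the paper's $z^*$ are different numbers living in different variables. As for your worry about $|\alpha|R_\theta^{N_t}(z^*)<1$, the paper does not address it either; it simply records the contraction factor and moves on.
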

\begin{proof}
At this point it is clear that analyzing the behaviour of the quantity $\max\limits_{z\in\sigma (\Delta t A)}\widetilde{W}(z)$ is enough, where $\widetilde{W}(z)=\frac{\vert\alpha R_{\theta}^{N_t}(z)\vert}{\vert1-\alpha R_{\theta}^{N_t}(z)\vert}$. Now the stability function corresponding to $\theta=1$ is $R_1(z)=\frac{1}{1+\phi(z)}$, where $\phi(z)=-\beta^2 z + \frac{\epsilon^2}{\Delta t}z^2$ with $z=-\lambda_p \Delta t \geq 0$. The function $R_1(z)$ is monotonic increasing for $z\leq\frac{\beta^2 \Delta t}{2\epsilon^2}$ and monotonic decreasing for $z>\frac{\beta^2 \Delta t}{2\epsilon^2}$. Observing that the function $g_5(y)=\frac{\vert\alpha\vert y^{N_t}}{1-\vert\alpha\vert y^{N_t}}$ is monotonic increasing. So we have $\max\limits_{z\in\sigma (\Delta t A)}\widetilde{W}(z) \leq \frac{\vert\alpha\vert R_{1}^{N_t}(z^*)}{1-\vert\alpha\vert R_{1}^{N_t}(z^*)}$, where $z^*=\frac{\beta^2 \Delta t}{2\epsilon^2}$. For $\theta=1/2$ the stability function is $R_{1/2}(z)=\frac{1-\frac{\phi (z)}{2}}{1+\frac{\phi (z)}{2}}$, and a similar analysis holds for $R_{1/2}(z)$ and we get the estimate for $\theta=1/2$.
\end{proof}
\remark
We can extend the above analysis to 2D and 3D at each level.
\begin{enumerate}
\item In 2D, Theorem \ref{thm4} also holds for the convergence factor $\rho$ having the form
\begin{equation*}
\rho^k=
\begin{cases}
    e^{\widetilde{g}(z_1^*, z_2^*)} \left( \frac{\vert\alpha\vert e^{\widetilde{g}(z_1^*, z_2^*)}}{1-\vert\alpha\vert e^{\widetilde{g}(z_1^*, z_2^*)}}\right)^k , & \text{for}\; z_1, z_2 <\frac{\beta^2 \sqrt{T}}{\epsilon^2},\\
   \left( \frac{\vert\alpha\vert}{1-\vert\alpha\vert}\right)^k  , & \text{for}\; z_1, z_2\geq\frac{\beta^2 \sqrt{T}}{\epsilon^2},\\
\end{cases}
\end{equation*}
where $\widetilde{g}(z_1, z_2)=-\epsilon^2 \left( z_1^2 +z_2^2\right)  + \beta^2 \sqrt{T}\left( z_1+z_2\right) ,\; z_1^*=z_2^*=\frac{\beta^2 \sqrt{T}}{2\epsilon^2}$. The variables are $z_1=\omega^2 \sqrt{T}>0, z_2=\xi^2 \sqrt{T}>0$, where $\omega, \xi$ are Fourier variables in 2D. Similarly one can write the convergence factor $\rho$ in 3D.  
\item
The spectrum of two dimensional discrete operator $A=\beta^2\Delta_h + \epsilon^2\Delta^2_h$ with homogeneous  Neumann boundary condition and equidistant grid points in both spatial direction 
are $\beta^2\lambda_{p,q} + \epsilon^2\lambda_{p,q}^2$ for $p,q=1,\cdots , N_x$, where $\lambda_{p,q}$ is as defined earlier. Hence one can observe that the spectral property remains same for $A$ in 2D and we get the error estimate similarly by considering $z=-\lambda_{p,q}T$, so the Theorem \ref{thm5} also holds true in 2D.
\item Theorem \ref{thm6} also depends on spectral distribution of $A$, hence we get the error estimate in 2D for $\theta=1, 1/2$ in the same way by considering $z=-\lambda_{p,q}\Delta t$.
\item For the general fourth order operator of the form 
\begin{equation}\label{general_4thorder}
u_t=-\Delta^2 u + \Delta u, 
\end{equation}
having the following PinT algorithm 
\begin{equation*} 
\begin{cases}
    u_t^k =-\Delta^2 u^k +\Delta u^k, & (x,t)\in\Omega\times(0,T],\\
   u^k(x,0)=\alpha u^k(x,T)-\alpha u^{k-1}(x,T) +u_0(x) , & x\in\Omega,\\
\end{cases}
\end{equation*}
a similar analysis can be done to get the error estimate at continuous level where the convergence factor $\rho$ is given by
\begin{equation*}
\rho=
\begin{cases}
     \frac{\vert\alpha\vert e^{-\left( \omega_{\min}^2+\omega_{\min}^4\right)T}}{1-\vert\alpha\vert e^{-\left( \omega_{\min}^2+\omega_{\min}^4\right)T}  }, & \text{in}\;  1D,\\
    \frac{\vert\alpha\vert e^{-\left( \omega_{\min}^2 +\xi_{\min}^2+\omega_{\min}^4+\xi_{\min}^4\right) T}}{1-\vert\alpha\vert e^{-\left( \omega_{\min}^2 +\xi_{\min}^2+\omega_{\min}^4+\xi_{\min}^4\right) T}} , & \text{in}\;  2D.\\
\end{cases}
\end{equation*}
Using the same reasoning as above one can get the error estimate for \eqref{general_4thorder} at semi-discrete and fully discrete level.
\end{enumerate}
\section{The CH Equation}\label{Section4}
In this section we formulate the iterative PinT algorithm for the CH equation 
\begin{equation}\label{modelproblem3} 
\begin{cases}
    u_t =\Delta f(u) - \epsilon^2\Delta^2u, & (x,t)\in\Omega\times(0,T]\\
    \partial_{\nu}u=0= \partial_{\nu}(\Delta u), & (x,t)\in\partial\Omega\times(0,T]\\
    u(x,0)=u_0, & x\in\Omega.
\end{cases}
\end{equation}
The PinT algorithm at continuous level for \eqref{modelproblem3} is based on WR iteration and \textit{periodic-like} initial condition, as the following: 
\begin{equation}\label{modelproblem3_wr} 
\begin{cases}
    u_t^k =\Delta f(u^k)-\epsilon^2\Delta^2u^k, & (x,t)\in\Omega\times(0,T],\\
   u^k(x,0)=\alpha u^k(x,T)-\alpha u^{k-1}(x,T) +u_0(x) , & x\in\Omega,\\
\end{cases}
\end{equation}
where $k\geq 1$ is the WR iteration index and $\alpha$ is the PinT free parameter. Before formulating the PinT algorithm at fully discrete level, we can observe that non-increasing property of the energy also holds for the algorithm \eqref{modelproblem3_wr}. This means that $\frac{d}{dt}\mathcal{E}(u^k)\leq 0$ for all $k\geq 1$. 
\subsection{Formulation of PinT algorithm}
We construct two PinT algorithm for the CH equation \eqref{modelproblem3} depending on the discretization of the equation \eqref{modelproblem3}. The first one is fully implicit and conditionally gradient stable, with the condition $\Delta t\leq O(\epsilon^2)$, see \cite{Elliott}. And the other is an unconditional gradient stable nonlinear splitting scheme of Eyre in \cite{ David, Eyre}. Firstly we formulate the PinT algorithm for the fully implicit case.
We discretize the spatial derivatives using central finite difference scheme in an interval with equidistant grid points and homogeneous Neumann boundary condition as shown in \eqref{modelproblem3}. We obtain the following semi-discrete CH equation in \eqref{semi-dis_CH} and the corresponding semi-discrete PinT iterative algorithm in \eqref{semi-dis_CH_WR}
\begin{equation}\label{semi-dis_CH}
  \begin{cases}
    \dot{\textbf{u}}(t) =H(t, \textbf{u}), \\
    \textbf{u}(0)=\textbf{u}_{0},
  \end{cases}
\end{equation}
\begin{equation}\label{semi-dis_CH_WR}
  \begin{cases}
    \dot{\textbf{u}}^k(t) =H(t, \textbf{u}^k), \\
    \textbf{u}^k(0)=\alpha \textbf{u}^k(T) -\alpha \textbf{u}^{k-1}(T) + \textbf{u}_{0},
  \end{cases}
\end{equation}
where $H(t,\textbf{u}):\mathbb{R}^{+}\times\mathbb{R}^{N_x}\rightarrow\mathbb{R}^{N_x}$ is given by $H(t,\textbf{u})=\Delta_h f(\textbf{u})-\epsilon^2 \Delta_h^2\textbf{u}$. Applying backward-Euler in time in \eqref{semi-dis_CH}, we have the following fully discrete scheme in \eqref{fully-dis_CH} and the corresponding fully discrete PinT iterative algorithm in \eqref{fully-dis_CH_WR}
\begin{equation}\label{fully-dis_CH}
  \begin{cases}
    \frac{\textbf{u}_n-\textbf{u}_{n-1}}{\Delta t} =H(t_n, \textbf{u}_n), \\
    \textbf{u}(t_0)=\textbf{u}_{0},
  \end{cases}
\end{equation}
\begin{equation}\label{fully-dis_CH_WR}
  \begin{cases}
    \frac{\textbf{u}_n^k-\textbf{u}_{n-1}^k}{\Delta t} =H(t_n, \textbf{u}_n^k), \\
    \textbf{u}^k_0=\alpha \textbf{u}_{N_t}^k -\alpha \textbf{u}_{N_t}^{k-1} + \textbf{u}_{0}.
  \end{cases}
\end{equation}
By gathering all the space-time points in \eqref{fully-dis_CH_WR}, we have the following nonlinear \textit{all-at-once} or  nonlinear \textit{space-time} system of $\textbf{U}^k$ as
\begin{equation}\label{all-at-once_CH}
\left( C_1^{(\alpha)}\otimes I_x\right)\textbf{U}^k = \left(I_{t}\otimes \Delta_h\right)\widetilde{f}(\textbf{U}^k)- \left(I_{t}\otimes \Delta_h\right)\textbf{U}^k -\epsilon^2\left(I_{t}\otimes \Delta_h^2\right)\textbf{U}^k +\textbf{b}^{k-1}, 
\end{equation}
where $I_t\in\mathbb{R}^{N_t \times N_t}$ is the identity matrix, $\widetilde{f}(\textbf{U}^k):=\left( (\textbf{u}_1^k)^3, (\textbf{u}_2^k)^3, \cdots, (\textbf{u}_{N_t}^k)^3 \right)^T$ and  
$
\textbf{b}^{k-1}=\left( \frac{I_x}{\Delta t}\left(\textbf{u}_{0}-\alpha \textbf{u}_{N_t}^{k-1}\right),0,\cdots,0 \right)^T.
$
To solve the nonlinear system \eqref{all-at-once_CH}, we apply the quasi-Newton method on $G(\textbf{U}^k)$, where 
\begin{equation}\label{nonlinear_G}
G(\textbf{U}^k):=\left( C_1^{(\alpha)}\otimes I_x\right)\textbf{U}^k - \left(I_{t}\otimes \Delta_h\right)\widetilde{f}(\textbf{U}^k)+ \left(I_{t}\otimes \Delta_h\right)\textbf{U}^k +\epsilon^2\left(I_{t}\otimes \Delta_h^2\right)\textbf{U}^k -\textbf{b}^{k-1}.
\end{equation}
We define the following matrix 
\begin{equation}
J(\textbf{U}^k):=
\begin{bmatrix}
J_s(\textbf{u}_1^k) & & & &\\
& J_s(\textbf{u}_2^k) & & &\\
& &  \ddots & &\\
& & & & J_s(\textbf{u}_{N_t}^k)
\end{bmatrix},
\end{equation}
where $J_s(\textbf{u}_{n}^k):=\diag((3u_n^{1,k})^2, \cdots, (3u_n^{N_x, k})^2)$, for $n=1,\cdots, N_t$. Now we write the Jacobian of $G$ as the following 
\begin{equation}\label{jacobian1}
G'(\textbf{U}^k)=\left( C_1^{(\alpha)}\otimes I_x\right) - \left(I_{t}\otimes \Delta_h\right)J(\textbf{U}^k)+ \left(I_{t}\otimes \Delta_h\right) +\epsilon^2\left(I_{t}\otimes \Delta_h^2\right).
\end{equation}
Now the quasi-Newton method to solve \eqref{nonlinear_G} becomes computing the following iteration for $m=1,2,\cdots$
\begin{equation}\label{quasi_newton}
\textbf{U}^k_m = \textbf{U}^k_{m-1} - (\widetilde{G}'(\textbf{U}^k_{m-1}))^{-1}G(\textbf{U}^k_{m-1}), 
\end{equation}
where $\widetilde{G}'$ is an approximation to the Jacobian  $G'$. The approximated Jacobian $\widetilde{G}'$ has the following form 
\begin{equation}\label{approx_jacobian}
\widetilde{G}'(\textbf{U}^k)=\left( C_1^{(\alpha)}\otimes I_x\right) - \left(I_{t}\otimes \Delta_h\widetilde{J}(\textbf{U}^k)\right)+ \left(I_{t}\otimes \Delta_h\right) +\epsilon^2\left(I_{t}\otimes \Delta_h^2\right), 
\end{equation}
where $\widetilde{J}(\textbf{U}^k):=I_t \otimes \left( \frac{1}{N_t}\sum_{n=1}^{N_t}J_s(\textbf{u}_n^k)\right) $.
This process of approximating the Jacobian has been successfully applied in \cite{gander2017halpern, gander2019shulin, gu2020parallel}. We rewrite \eqref{quasi_newton} using \eqref{nonlinear_G} and \eqref{approx_jacobian} to get the following
\begin{equation}\label{quasi_newton_newform}
\widetilde{G}'(\textbf{U}^k_{m-1})\textbf{U}^k_m = \left(I_{t}\otimes \Delta_h\right)\widetilde{f}(\textbf{U}^k_{m-1})-\left(I_{t}\otimes \Delta_h\widetilde{J}(\textbf{U}^k_{m-1})\right)\textbf{U}^k_{m-1} +\textbf{b}^{k-1}.
\end{equation}
We use the diagonalizability of $C_1^{(\alpha)}$ and the tensor product to factor the system $\widetilde{G}'(\textbf{U}^k_{m-1})$ as $(V\otimes I_x)(D_1\otimes I_x - I_{t}\otimes \Delta_h\widetilde{J}(\textbf{U}^k_{m-1})+ I_{t}\otimes \Delta_h +\epsilon^2 I_{t}\otimes \Delta_h^2)(V^{-1}\otimes I_x )$, and then we  solve \eqref{quasi_newton_newform} by performing the following three steps
\begin{equation}\label{pintiter_CH}
\begin{aligned}
\text{Step}-(1)\; & S_1=\left(\mathbb{F}\otimes I_x \right)\left(\Gamma_{\alpha}\otimes I_x \right)\textbf{r}^{k}_{m-1},\\
\text{Step}-(2)\; & S_{2,n}=\left(\lambda_{1,n}I_x  -  \Delta_h\widetilde{J}(\textbf{U}^k_{m-1})+  \Delta_h +\epsilon^2 \Delta_h^2 \right)^{-1}S_{1,n},\; n=1,2,\cdots N_t,\\
\text{Step}-(3)\; & \textbf{U}_m^{k}=\left(\Gamma_{\alpha}^{-1}\otimes I_x \right) \left(\mathbb{F}^*\otimes I_x \right)S_2,\\
\end{aligned}
\end{equation}
where $\textbf{r}^{k}_{m-1}:=\left(I_{t}\otimes \Delta_h\right)\widetilde{f}(\textbf{U}^k_{m-1})-\left(I_{t}\otimes \Delta_h\widetilde{J}(\textbf{U}^k_{m-1})\right)\textbf{U}^k_{m-1} +\textbf{b}^{k-1}$, and  $D_1, S_1, S_2, \lambda_{1,n}$ are defined earlier. We call this algorithm PinT-I.

\vspace{1cm}
Next we formulate the PinT algorithm for the CH equation \eqref{modelproblem3} using the nonlinear splitting scheme of Eyre \cite{David, Eyre}. The idea here is to split the homogeneous free
energy $F(u)$ into the sum of a convex term and a concave term, and then treating the convex term implicitly and the concave term explicitly to obtain a nonlinear unconditionally stable first order in time and second order in space accurate approximation of the CH equation. The iterative PinT algorithm at fully discrete level corresponding to nonlinear splitting is the following
\begin{equation}\label{fully-dis_CH_WR_Eyre}
  \begin{cases}
    \frac{\textbf{u}_n^k-\textbf{u}_{n-1}^k}{\Delta t} =\Delta_h (\textbf{u}_n^k)^3 - \Delta_h \textbf{u}_{n-1}^k - \epsilon^2\Delta_h^2 \textbf{u}_n^k, \\
    \textbf{u}^k_0=\alpha \textbf{u}_{N_t}^k -\alpha \textbf{u}_{N_t}^{k-1} + \textbf{u}_{0}.
  \end{cases}
\end{equation}
By gathering all the space-time points in \eqref{fully-dis_CH_WR_Eyre}, we have the following nonlinear \textit{all-at-once} system of $\textbf{U}^k$ as
\begin{equation}\label{all-at-once_CH_Eyre}
\left( C_1^{(\alpha)}\otimes I_x\right)\textbf{U}^k = \left(I_{t}\otimes \Delta_h\right)\widetilde{f}(\textbf{U}^k)- \left(C_3^{(\alpha)}\otimes \Delta_h\right)\textbf{U}^k -\epsilon^2\left(I_{t}\otimes \Delta_h^2\right)\textbf{U}^k +\textbf{b}_0^{k-1}, 
\end{equation}
where $C_3^{(\alpha)}$ is the matrix $C_2^{(\alpha)}$ for $\theta=0$ in \eqref{ciculantmat}, and $
\textbf{b}_0^{k-1}=\left( (\frac{I_x}{\Delta t}-\Delta_h)\left(\textbf{u}_{0}-\alpha \textbf{u}_{N_t}^{k-1}\right),0,\cdots,0 \right)^T.
$
To solve the nonlinear system \eqref{all-at-once_CH_Eyre}, we apply the quasi-Newton method on $Q(\textbf{U}^k)$, where 
\begin{equation}\label{nonlinear_Q}
Q(\textbf{U}^k):=\left( C_1^{(\alpha)}\otimes I_x\right)\textbf{U}^k - \left(I_{t}\otimes \Delta_h\right)\widetilde{f}(\textbf{U}^k)+ \left(C_3^{(\alpha)}\otimes \Delta_h\right)\textbf{U}^k +\epsilon^2\left(I_{t}\otimes \Delta_h^2\right)\textbf{U}^k -\textbf{b}_0^{k-1}.
\end{equation}
The Jacobian of $Q$ is the following 
\begin{equation}\label{jacobian_Q}
Q'(\textbf{U}^k)=\left( C_1^{(\alpha)}\otimes I_x\right) - \left(I_{t}\otimes \Delta_h\right)J(\textbf{U}^k)+ \left(C_3^{(\alpha)}\otimes \Delta_h\right) +\epsilon^2\left(I_{t}\otimes \Delta_h^2\right).
\end{equation}
The quasi-Newton method to solve \eqref{nonlinear_Q} consists of the following iteration for $m=1,2,\cdots$
\begin{equation}\label{quasi_newton_eyre}
\textbf{U}^k_m = \textbf{U}^k_{m-1} - (\widetilde{Q}'(\textbf{U}^k_{m-1}))^{-1}Q(\textbf{U}^k_{m-1}), 
\end{equation}
where $\widetilde{Q}'$ is an approximation to the Jacobian  $Q'$ in \eqref{jacobian_Q}. The approximated Jacobian $\widetilde{Q}'$ has the following form 
\begin{equation}\label{approx_jacoQ}
\widetilde{Q}'(\textbf{U}^k)=\left( C_1^{(\alpha)}\otimes I_x\right) - \left(I_{t}\otimes \Delta_h\widetilde{J}(\textbf{U}^k)\right)+ \left(C_3^{(\alpha)}\otimes \Delta_h\right) +\epsilon^2\left(I_{t}\otimes \Delta_h^2\right). 
\end{equation}
Rewriting \eqref{quasi_newton_eyre} using \eqref{nonlinear_Q} and \eqref{approx_jacoQ} yields 
\begin{equation}\label{quasi_newton_newform_eyre}
\widetilde{Q}'(\textbf{U}^k_{m-1})\textbf{U}^k_m = \left(I_{t}\otimes \Delta_h\right)\widetilde{f}(\textbf{U}^k_{m-1})-\left(I_{t}\otimes \Delta_h\widetilde{J}(\textbf{U}^k_{m-1})\right)\textbf{U}^k_{m-1} +\textbf{b}_0^{k-1}.
\end{equation}
We use the diagonalizability of $C_1^{(\alpha)}, C_3^{(\alpha)}$ and the tensor product to factor the system $\widetilde{Q}'(\textbf{U}^k_{m-1})$ as $(V\otimes I_x)(D_1\otimes I_x - I_{t}\otimes \Delta_h\widetilde{J}(\textbf{U}^k_{m-1})+ D_3\otimes \Delta_h +\epsilon^2 I_{t}\otimes \Delta_h^2)(V^{-1}\otimes I_x )$, and then we  solve \eqref{quasi_newton_newform_eyre} by performing usual three steps
\begin{equation}\label{pintiter_CH_eyre}
\begin{aligned}
\text{Step}-(1)\; & S_1=\left(\mathbb{F}\otimes I_x \right)\left(\Gamma_{\alpha}\otimes I_x \right)\textbf{r}^{k}_{m-1},\\
\text{Step}-(2)\; & S_{2,n}=\left(\lambda_{1,n}I_x  -  \Delta_h\widetilde{J}(\textbf{U}^k_{m-1})+  \lambda_{3,n}\Delta_h +\epsilon^2 \Delta_h^2 \right)^{-1}S_{1,n},\; n=1,2,\cdots N_t,\\
\text{Step}-(3)\; & \textbf{U}_m^{k}=\left(\Gamma_{\alpha}^{-1}\otimes I_x \right) \left(\mathbb{F}^*\otimes I_x \right)S_2,\\
\end{aligned}
\end{equation}
where $\textbf{r}^{k}_{m-1}:=\left(I_{t}\otimes \Delta_h\right)\widetilde{f}(\textbf{U}^k_{m-1})-\left(I_{t}\otimes \Delta_h\widetilde{J}(\textbf{U}^k_{m-1})\right)\textbf{U}^k_{m-1} +\textbf{b}_0^{k-1}$, $D_3=\diag(\lambda_{3,1}, \cdots\\,  \lambda_{3,N_t})$, $\lambda_{3,n}= \alpha^{\frac{1}{N_t}}e^{-i\frac{2n\pi}{N_t}}$ and  $D_1, S_1, S_2, \lambda_{1,n}$ are defined earlier. We call this algorithm PinT-II.
\remark
One can extend the algorithm \eqref{pintiter_CH} and \eqref{pintiter_CH_eyre} to 2D and 3D in a natural way.
\subsection{Convergence analysis for the nonlinear CH equation}
In this section we present the convergence result for the PinT algorithm \eqref{modelproblem3_wr}. First we discuss the convergence at continuous setting.
\begin{theorem}[Error estimate at continuous level]\label{thm0_nch}
Let ${u}^k$ be the $k$-th iterate of the PinT algorithm \eqref{modelproblem3_wr}, and $u_0, u_0^0\in L^2$. Then ${u}^k$ converges to $u$ of \eqref{modelproblem3} in $L^{\infty}(0,T,L^2)$.
\end{theorem}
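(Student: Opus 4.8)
The plan is to reproduce, in the nonlinear setting, the two-stage mechanism behind Theorems~\ref{thm1} and~\ref{thm4}: an evolution estimate over a single subinterval $[0,T]$, followed by the contraction produced by the \emph{periodic-like} coupling of consecutive iterates, with the cubic nonlinearity tamed by the standing Lipschitz bound \eqref{Lipchitz_CH}. Setting $e^k:=u^k-u$ and subtracting \eqref{modelproblem3} from \eqref{modelproblem3_wr} gives the error system
\begin{equation*}
\begin{cases}
e_t^k = \Delta\bigl(f(u^k)-f(u)\bigr) - \epsilon^2\Delta^2 e^k, & (x,t)\in\Omega\times(0,T],\\
e^k(x,0) = \alpha\bigl(e^k(x,T) - e^{k-1}(x,T)\bigr), & x\in\Omega,
\end{cases}
\end{equation*}
with the homogeneous Neumann conditions $\partial_\nu e^k = 0 = \partial_\nu(\Delta e^k)$ inherited from \eqref{modelproblem3}.

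First I would obtain an estimate on one subinterval. Testing the error equation with $e^k$ in $L^2(\Omega)$ and integrating by parts (the boundary terms vanish since $\partial_\nu e^k = \partial_\nu\Delta e^k = 0$ and $\partial_\nu\bigl(f(u^k)-f(u)\bigr)=f'(u^k)\partial_\nu u^k-f'(u)\partial_\nu u = 0$) gives
\begin{equation*}
\tfrac12\frac{d}{dt}\|e^k(t)\|_{L^2}^2 = \int_\Omega \Delta e^k\,\bigl(f(u^k)-f(u)\bigr)\,dx \;-\; \epsilon^2\|\Delta e^k(t)\|_{L^2}^2 .
\end{equation*}
By \eqref{Lipchitz_CH} we have $|f(u^k)-f(u)|\le M|e^k|$ pointwise, so Cauchy--Schwarz together with Young's inequality absorbs the cross term into the dissipation, $\int_\Omega \Delta e^k\,(f(u^k)-f(u))\le \epsilon^2\|\Delta e^k\|_{L^2}^2 + \tfrac{M^2}{4\epsilon^2}\|e^k\|_{L^2}^2$, leaving $\frac{d}{dt}\|e^k(t)\|_{L^2}^2 \le \tfrac{M^2}{2\epsilon^2}\|e^k(t)\|_{L^2}^2$. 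Gronwall's inequality then yields $\|e^k(t)\|_{L^2}\le e^{M^2 t/(4\epsilon^2)}\|e^k(0)\|_{L^2}$ for $t\in[0,T]$, and in particular $\|e^k(T)\|_{L^2}\le \kappa_0\,\|e^k(0)\|_{L^2}$ with $\kappa_0:=e^{M^2 T/(4\epsilon^2)}$.

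It remains to close the iteration. From the coupled initial condition and the triangle inequality, $\|e^k(0)\|_{L^2}\le|\alpha|\,\|e^k(T)\|_{L^2}+|\alpha|\,\|e^{k-1}(T)\|_{L^2}\le |\alpha|\kappa_0\bigl(\|e^k(0)\|_{L^2}+\|e^{k-1}(0)\|_{L^2}\bigr)$; hence, provided $\alpha$ is small enough that $\kappa:=|\alpha|\kappa_0<\tfrac12$, one gets the linear contraction $\|e^k(0)\|_{L^2}\le \tfrac{\kappa}{1-\kappa}\,\|e^{k-1}(0)\|_{L^2}$ with factor $\tfrac{\kappa}{1-\kappa}<1$. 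Iterating in $k$ and invoking the subinterval estimate once more,
\begin{equation*}
\|e^k\|_{L^\infty(0,T;L^2)}\;\le\;\kappa_0\,\|e^k(0)\|_{L^2}\;\le\;\kappa_0\Bigl(\tfrac{\kappa}{1-\kappa}\Bigr)^{k}\|e^0(0)\|_{L^2}\;\longrightarrow\;0\quad(k\to\infty),
\end{equation*}
where $\|e^0(0)\|_{L^2}=\|u_0^0-u_0\|_{L^2}<\infty$ by hypothesis; this is the asserted convergence of $u^k$ to $u$ in $L^\infty(0,T;L^2)$.

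The main obstacle is the nonlinear term, and this is precisely where \eqref{Lipchitz_CH} is essential: the bound $|f(u^k)-f(u)|\le M|e^k|$ presupposes that $f=F'$ is effectively globally Lipschitz, which requires a uniform $L^\infty$ control of the iterates $u^k$ (or working directly with the truncated nonlinearity implicit in \eqref{Lipchitz_CH}); this same boundedness underlies the well-posedness of each \emph{periodic-like} subproblem \eqref{modelproblem3_wr}, i.e. the solvability of the time fixed point $v\mapsto \alpha\,\Phi_T(v)-\alpha\,u^{k-1}(T)+u_0$, where $\Phi_T$ is the time-$T$ solution map of \eqref{modelproblem3}, shown Lipschitz with constant $\kappa_0$ by the energy estimate above. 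I also note that, although the statement is phrased without a restriction on $\alpha$, the argument genuinely needs $|\alpha|\,e^{M^2 T/(4\epsilon^2)}<\tfrac12$; this smallness is the nonlinear analogue of the hypothesis $|\alpha|<1/2$ in Theorems~\ref{thm4}--\ref{thm6}, with the exponential factor $\kappa_0$ playing the role of the anti-dissipative low-frequency growth encountered there.
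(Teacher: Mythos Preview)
Your proof is correct and follows essentially the same strategy as the paper: an $L^2$ energy estimate with the Lipschitz bound \eqref{Lipchitz_CH} and Young's inequality to absorb the nonlinear term into the biharmonic dissipation, then Gronwall, then the contraction from the periodic-like coupling under a smallness condition on $|\alpha|$ (the paper likewise requires $|\alpha|<\tfrac{1}{2}e^{-C^*T}$). The only minor variation is that you integrate by parts twice, pairing $\Delta e^k$ with $f(u^k)-f(u)$, whereas the paper integrates once and then invokes the interpolation $\|\nabla e^k\|_2^2\le\|\Delta e^k\|_2\|e^k\|_2$, which yields a slightly different Gronwall constant ($M^2/(4\epsilon^2)$ for you versus $C^*=M/2+M^2/(8\epsilon^2)$ in the paper).
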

\begin{proof}
Let $e^k(x,t)=u^k(x,t)-u(x,t)$ be the error at $k$-th iteration, then we have the following error equation
\begin{equation}\label{err_nch1}
\begin{cases}
    \frac{\partial e^k}{\partial t} = \Delta\left( f(u^k)-f(u)\right) - \epsilon^2 \Delta^2 e^k, & (x,t)\in\Omega\times(0,T],\\
   e^k(x,0)=\alpha( e^k(x,T)- e^{k-1}(x,T)) , & x\in\Omega.
\end{cases}
\end{equation}
Multiplying the 1st equation \eqref{err_nch1} with $e^k$ and integrate over the domain $\Omega$ we have 
\begin{equation}\label{err_nch2}
\begin{aligned}[b]
\frac{1}{2}\frac{d}{dt} \parallel e^k\parallel_2^2  & =\int_{\Omega} \nabla\left( f(u^k)-f(u)\right)\nabla e^k - \epsilon^2 \parallel \Delta e^k\parallel_2^2\\
& \leq M \parallel e^k\parallel_2 \parallel \nabla e^k\parallel_2- \epsilon^2 \parallel \Delta e^k\parallel_2^2\\
& \leq \frac{M}{2}\left(\parallel e^k\parallel_2^2 + \parallel \nabla e^k\parallel_2^2 \right)-\epsilon^2 \parallel \Delta e^k\parallel_2^2\\
& \leq \frac{M}{2}\left(\parallel e^k\parallel_2^2 + \parallel \Delta e^k\parallel_2 \parallel e^k\parallel_2 \right)-\epsilon^2 \parallel \Delta e^k\parallel_2^2\\
& \leq C^* \parallel e^k\parallel_2^2- \frac{\epsilon^2}{2} \parallel \Delta e^k\parallel_2^2 \leq C^* \parallel e^k\parallel_2^2,
\end{aligned}
\end{equation}
where in the first inequality we use Lipschitz condition of $f$, in the second inequality we use the Cauchy-Schwarz inequality, in the third inequality we use the inequality $ \parallel \nabla e^k\parallel_2^2\leq \parallel \Delta e^k\parallel_2 \parallel e^k\parallel_2$, in the fourth inequality we use the fact that the term $\frac{M}{2}\parallel \Delta e^k\parallel_2 \parallel e^k\parallel_2 \leq \frac{M^2}{8\epsilon^2}\parallel e^k\parallel_2^2 + \frac{\epsilon^2}{2}\parallel \Delta e^k\parallel_2^2$, and $C^*= \left(\frac{M}{2}+\frac{M^2}{8\epsilon^2} \right)$.
From \eqref{err_nch2} we get the following 
\begin{equation}\label{err_nch3}
\parallel e^k(., t)\parallel_2 \leq \parallel e^k_0\parallel_2 e^{C^* t}.
\end{equation}
Now taking norm on the 2nd equation of \eqref{err_nch1} and using the inequality \eqref{err_nch3} at $t=T$ we obtain the following recurrence relation
\begin{equation}\label{err_nch4}
\parallel e^k_0\parallel_2 \leq \gamma \parallel e^{k-1}_0\parallel_2, \text{where}\; \gamma= \frac{\vert \alpha\vert e^{C^* T}}{1-\vert \alpha\vert e^{C^* T}}.
\end{equation}
We consider the sum $\sum_{k=1}^{N} \parallel e^k(.,T)\parallel_2$ and show that sum is independent of $k$ for $\vert \alpha\vert < \frac{1}{2e^{C^* T}}$. Using \eqref{err_nch4} in \eqref{err_nch3} at $t=T$ we get the following
\begin{equation}\label{err_nch5}
\sum_{k=1}^{N} \parallel e^k(.,T)\parallel_2  \leq  e^{C^* T}\sum_{k=1}^{N} \parallel e^k_0\parallel_2
 \leq e^{C^* T} \parallel e^0_0\parallel_2 \sum_{k=1}^{N} \gamma^k .
\end{equation}
For $\vert \alpha\vert < \frac{1}{2e^{C^* T}}$ we have $\gamma<1$, so the series $\sum_{k=1}^{\infty} \gamma^k$ is convergent. Hence 
from \eqref{err_nch5} we have $\parallel e^k(.,T)\parallel_2 \rightarrow 0$ as $k\rightarrow\infty$. Hence we have the result.
\end{proof}

Next we show the convergence behaviour of the PinT algorithm \eqref{semi-dis_CH_WR} for the CH equation in the semi-discrete setting. We assume  $f$ to be Lipschitz with Lipschitz constant $M$ in \eqref{Lipchitz_CH} and we know Lipschitz implies \textit{one-sided} Lipschitz. So in the semi-discrete context, we get $f$ to be  \textit{one-sided} Lipschitz satisfying 
\begin{equation}\label{onesidedlip_f}
\langle f(\textbf{u}_1) - f(\textbf{u}_2), \textbf{u}_1 - \textbf{u}_2 \rangle \leq M \parallel\textbf{u}_1 -\textbf{u}_2\parallel_2^2, \forall t\in [0, T], \textbf{u}_1, \textbf{u}_2 \in\mathbb{R}^{N_x},
\end{equation}
where $\langle \cdot\rangle$ is the Euclidean inner product.
Now using \eqref{onesidedlip_f} and the fact that $\frac{\langle \Delta_h^2 y, y\rangle}{\parallel y \parallel_2^2}=\lambda_p^2\geq\frac{1}{\sqrt{N_x}}\parallel \Delta_h^2\parallel_{\infty}$, $\forall y\in\mathbb{R}^{N_x}$, it is easy to see that $H$ is also \textit{one-sided} Lipschitz, satisfying the following \textit{one-sided} Lipschitz condition
\begin{equation}\label{one-sided_Lip}
\langle H(t, \textbf{u}_1) - H(t, \textbf{u}_2), \textbf{u}_1 - \textbf{u}_2 \rangle \leq L^* \parallel\textbf{u}_1 -\textbf{u}_2\parallel_2^2, \forall t\in [0, T], \textbf{u}_1, \textbf{u}_2 \in\mathbb{R}^{N_x},
\end{equation}
where $L^*=M\parallel \Delta_h\parallel_{\infty} -\frac{\epsilon^2}{\sqrt{N_x}}\parallel \Delta_h^2\parallel_{\infty} = \frac{4M}{h^2}-\frac{16 \epsilon^2}{{\sqrt{N_x}}h^4}<0$.  Observe that $L^*<0$ imply $ h^{3/2}<\frac{4 \epsilon^2}{M}$ by taking $N_x\sim O(\frac{1}{h})$. As $L^*<0$, we take $L^*=-L$, where $L=-M\parallel \Delta_h\parallel_{\infty} +\frac{\epsilon^2}{\sqrt{N_x}}\parallel \Delta_h^2\parallel_{\infty}>0$. Then the revised \textit{one-sided} Lipschitz condition for $H$ is the following
\begin{equation}\label{revised_one-sided_Lip}
\langle H(t, \textbf{u}_1) - H(t, \textbf{u}_2), \textbf{u}_1 - \textbf{u}_2 \rangle \leq -L \parallel\textbf{u}_1 -\textbf{u}_2\parallel_2^2, \forall t\in [0, T], \textbf{u}_1, \textbf{u}_2 \in\mathbb{R}^{N_x}.
\end{equation}
\begin{theorem}[Error estimate at semi-discrete level]\label{thm1_nch}
Let $\textbf{u}^k$ be the $k$-th iteration of the PinT algorithm \eqref{semi-dis_CH_WR} with $\vert\alpha\vert<1$, and $H(t,\textbf{u})$ satisfies \textit{one-sided} Lipschitz condition \eqref{revised_one-sided_Lip} with the constant $L> 0$ defined above. Then the semi-discrete error $\textbf{e}^k(t)=\textbf{u}^k(t)-\textbf{u}(t)$ at $k$-th iteration satisfies the following convergence estimate
\begin{equation} 
\max_{t\in[0,T]} \parallel \textbf{e}^k(t)\parallel_2 \leq \left(\frac{\vert\alpha\vert e^{-L T}}{1-\vert\alpha\vert e^{-L T}}\right)^k\parallel \textbf{e}^0(0)\parallel_2.
\end{equation}
\end{theorem}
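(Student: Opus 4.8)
The plan is to run the same scheme as the semi-discrete analysis of Subsection \ref{subsection1}, but since $H$ is nonlinear there is no diagonalization of the propagator available, so the spectral bound $\parallel\textbf{e}^{-AT}\parallel$ must be replaced by an energy estimate powered by the one-sided Lipschitz condition \eqref{revised_one-sided_Lip}. First I would subtract \eqref{semi-dis_CH} from \eqref{semi-dis_CH_WR} to get the error system $\dot{\textbf{e}}^k(t)=H(t,\textbf{u}^k)-H(t,\textbf{u})$ with $\textbf{e}^k(0)=\alpha(\textbf{e}^k(T)-\textbf{e}^{k-1}(T))$. Taking the Euclidean inner product of the differential equation with $\textbf{e}^k(t)$ and invoking \eqref{revised_one-sided_Lip} with $\textbf{u}_1=\textbf{u}^k(t),\textbf{u}_2=\textbf{u}(t)$ yields $\tfrac{1}{2}\tfrac{d}{dt}\parallel\textbf{e}^k(t)\parallel_2^2\le -L\parallel\textbf{e}^k(t)\parallel_2^2$, so Gronwall's inequality gives $\parallel\textbf{e}^k(t)\parallel_2\le e^{-Lt}\parallel\textbf{e}^k(0)\parallel_2$ for all $t\in[0,T]$. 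Because $L>0$, this already implies $\max_{t\in[0,T]}\parallel\textbf{e}^k(t)\parallel_2\le\parallel\textbf{e}^k(0)\parallel_2$, so it remains to control $\parallel\textbf{e}^k(0)\parallel_2$.

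Next I would evaluate the Gronwall bound at $t=T$, namely $\parallel\textbf{e}^k(T)\parallel_2\le e^{-LT}\parallel\textbf{e}^k(0)\parallel_2$ (and likewise for index $k-1$), and substitute it into the \emph{periodic-like} initial condition. The triangle inequality gives
\[
\parallel\textbf{e}^k(0)\parallel_2\le|\alpha|\parallel\textbf{e}^k(T)\parallel_2+|\alpha|\parallel\textbf{e}^{k-1}(T)\parallel_2\le|\alpha|e^{-LT}\parallel\textbf{e}^k(0)\parallel_2+|\alpha|e^{-LT}\parallel\textbf{e}^{k-1}(0)\parallel_2.
\]
Since $|\alpha|<1$ and $e^{-LT}<1$, the coefficient $|\alpha|e^{-LT}<1$, so the $\parallel\textbf{e}^k(0)\parallel_2$ term on the right can be absorbed on the left, producing the one-step recurrence $\parallel\textbf{e}^k(0)\parallel_2\le\frac{|\alpha|e^{-LT}}{1-|\alpha|e^{-LT}}\parallel\textbf{e}^{k-1}(0)\parallel_2$. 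Iterating this down to $k=0$ and combining with the $L^\infty$-in-time bound from the first paragraph gives exactly the claimed estimate.

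The argument is essentially routine once \eqref{revised_one-sided_Lip} is available; the only point requiring a little care is the absorption step, i.e.\ checking that $1-|\alpha|e^{-LT}>0$, which is where the hypothesis $|\alpha|<1$ combined with the strict sign $L^*=-L<0$ (established just before the theorem) is used. I should note that the stated inequality only needs positivity of this denominator; to obtain an actual contraction one additionally needs $|\alpha|e^{-LT}<\tfrac12$. If a sharper Gronwall constant were desired, one could retain the $-\tfrac{\epsilon^2}{2}\parallel\Delta_h\textbf{e}^k\parallel_2^2$ dissipation term in the style of \eqref{err_nch2}, but that refinement is not needed here.
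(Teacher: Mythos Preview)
Your argument is correct and is precisely the energy-estimate-plus-Gronwall route that the paper has in mind; the paper itself does not spell out a proof but simply cites Theorem~5.1 of \cite{gander2019shulin}, whose content is exactly the computation you wrote down. Your remark on the absorption step and on the distinction between boundedness and contraction is accurate and a welcome clarification.
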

\begin{proof}
The proof follows from the Theorem 5.1 in \cite{gander2019shulin}.
\end{proof}
We now show the convergence behaviour of the PinT algorithm \eqref{fully-dis_CH_WR} for the CH equation at fully discrete level. Let $\textbf{e}_n^k=\textbf{u}_n^k - \textbf{u}_n$ be the error vector at each PinT iteration. 
\begin{theorem}[Error estimate at fully discrete level]\label{thm2_nch}
The fully discrete error $\textbf{e}_n^k$ at $k-$th iterate of the PinT algorithm \eqref{fully-dis_CH_WR} with $\vert\alpha\vert<1$ satisfies the following convergence estimate 
\begin{equation*}
\max\limits_{n=1,2,\cdots,N_t}\parallel \textbf{e}_n^k \parallel_2\leq \rho^k \parallel \textbf{e}_0^0 \parallel_2, \rho=\frac{\vert\alpha\vert C_1}{1-\vert\alpha\vert C_1},
\end{equation*}
where the explicit expression of $C_1$ is given in \eqref{inequality_3}. 
\end{theorem}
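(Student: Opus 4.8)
The plan is to follow the same template as Theorems~\ref{thm2} and \ref{thm3}: first derive a one-sweep contraction for the fully discrete error, then close the iteration through the \emph{periodic-like} coupling. The new ingredient is that, $H$ being nonlinear, the spectral argument must be replaced by the dissipative \emph{one-sided} Lipschitz estimate \eqref{revised_one-sided_Lip}, which is exactly the mechanism already used for Theorem~\ref{thm1_nch}.

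First I would subtract the reference scheme \eqref{fully-dis_CH} from the WR scheme \eqref{fully-dis_CH_WR} to obtain, with $\textbf{e}_n^k=\textbf{u}_n^k-\textbf{u}_n$, the error recursion
\[
\frac{\textbf{e}_n^k-\textbf{e}_{n-1}^k}{\Delta t}=H(t_n,\textbf{u}_n^k)-H(t_n,\textbf{u}_n),\qquad n=1,\dots,N_t,
\]
subject to the coupling $\textbf{e}_0^k=\alpha\bigl(\textbf{e}_{N_t}^k-\textbf{e}_{N_t}^{k-1}\bigr)$. The core step is to pair the $n$-th error equation with $\textbf{e}_n^k$ in the Euclidean inner product: on the left, $\langle\textbf{e}_n^k-\textbf{e}_{n-1}^k,\textbf{e}_n^k\rangle\ge\parallel\textbf{e}_n^k\parallel_2^2-\parallel\textbf{e}_{n-1}^k\parallel_2\parallel\textbf{e}_n^k\parallel_2$ by Cauchy--Schwarz; on the right, \eqref{revised_one-sided_Lip} applied with $\textbf{u}_1=\textbf{u}_n^k$ and $\textbf{u}_2=\textbf{u}_n$ gives $\langle H(t_n,\textbf{u}_n^k)-H(t_n,\textbf{u}_n),\textbf{e}_n^k\rangle\le-L\parallel\textbf{e}_n^k\parallel_2^2$. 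Combining and cancelling one factor $\parallel\textbf{e}_n^k\parallel_2$ (the case $\textbf{e}_n^k=\textbf{0}$ being trivial) yields $(1+L\Delta t)\parallel\textbf{e}_n^k\parallel_2\le\parallel\textbf{e}_{n-1}^k\parallel_2$; telescoping over $n=1,\dots,N_t$ then gives $\parallel\textbf{e}_{N_t}^k\parallel_2\le C_1\parallel\textbf{e}_0^k\parallel_2$ with $C_1=(1+L\Delta t)^{-N_t}$ (the explicit expression of $C_1$), and, since each step contracts, also $\max_{1\le n\le N_t}\parallel\textbf{e}_n^k\parallel_2\le\parallel\textbf{e}_0^k\parallel_2$.

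To finish I would insert the bounds $\parallel\textbf{e}_{N_t}^k\parallel_2\le C_1\parallel\textbf{e}_0^k\parallel_2$ and $\parallel\textbf{e}_{N_t}^{k-1}\parallel_2\le C_1\parallel\textbf{e}_0^{k-1}\parallel_2$ into the triangle inequality for $\textbf{e}_0^k=\alpha(\textbf{e}_{N_t}^k-\textbf{e}_{N_t}^{k-1})$, obtaining $(1-|\alpha|C_1)\parallel\textbf{e}_0^k\parallel_2\le|\alpha|C_1\parallel\textbf{e}_0^{k-1}\parallel_2$. Since $0<C_1<1$, the hypothesis $|\alpha|<1$ makes $|\alpha|C_1<1$, hence $\parallel\textbf{e}_0^k\parallel_2\le\rho\parallel\textbf{e}_0^{k-1}\parallel_2$ with $\rho=\frac{|\alpha|C_1}{1-|\alpha|C_1}$; induction on $k$ gives $\parallel\textbf{e}_0^k\parallel_2\le\rho^k\parallel\textbf{e}_0^0\parallel_2$, and combining with the max bound from the previous paragraph produces the stated estimate.

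The only genuinely substantive point is the one-sweep estimate, whose strength rests entirely on $H$ being \emph{strictly} dissipative, i.e.\ the one-sided Lipschitz constant $-L$ being negative; this was secured earlier under the mesh constraint $h^{3/2}<\frac{4\epsilon^2}{M}$. Were this constant nonnegative, the identical computation would produce a per-step factor $\ge1$ and the telescoping would not contract. Everything else is bookkeeping structurally identical to Theorems~\ref{thm2}--\ref{thm3}. One caveat worth recording: the displayed estimate is valid as soon as $|\alpha|C_1<1$, so $|\alpha|<1$ suffices for $\rho$ to be well defined; if in addition one wants $\rho<1$ one needs $|\alpha|C_1<\tfrac{1}{2}$, which is harmless in practice since $C_1=(1+L\Delta t)^{-N_t}$ is very small on the admissible meshes (heuristically $C_1$ is of order $e^{-LT}$ with $L$ large).
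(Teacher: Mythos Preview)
Your proof is correct and follows the same skeleton as the paper: subtract the schemes, pair the error equation with $\textbf{e}_n^k$, invoke the one-sided Lipschitz bound \eqref{revised_one-sided_Lip} on the right, telescope, and close through the periodic-like coupling exactly as in Theorems~\ref{thm2}--\ref{thm3}.

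The one substantive difference is the inequality you use on the left-hand side. The paper uses the polarization-type estimate
\[
\left\langle \frac{\textbf{e}_n^k-\textbf{e}_{n-1}^k}{\Delta t},\textbf{e}_n^k\right\rangle \ge \frac{\parallel\textbf{e}_n^k\parallel_2^2-\parallel\textbf{e}_{n-1}^k\parallel_2^2}{2\Delta t},
\]
which yields $(1+2L\Delta t)\parallel\textbf{e}_n^k\parallel_2^2\le\parallel\textbf{e}_{n-1}^k\parallel_2^2$ and hence $C_1=(1+2L\Delta t)^{-N_t/2}$ --- this is the $C_1$ the theorem statement points to via \eqref{inequality_3}. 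You instead use Cauchy--Schwarz directly on $\langle\textbf{e}_{n-1}^k,\textbf{e}_n^k\rangle$, cancel a factor of $\parallel\textbf{e}_n^k\parallel_2$, and obtain $C_1=(1+L\Delta t)^{-N_t}$. Since $(1+L\Delta t)^2>1+2L\Delta t$, your constant is actually \emph{sharper} than the paper's; the price is the mild case-split when $\textbf{e}_n^k=\textbf{0}$, which you handle. Either route is fine, but be aware that your $C_1$ does not literally match the one referenced in the statement.
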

\begin{proof}
From \eqref{fully-dis_CH} and \eqref{fully-dis_CH_WR}, the error $\textbf{e}_n^k$ satisfies
\begin{equation}\label{err_ch}
  \begin{cases}
    \frac{\textbf{e}_n^k - \textbf{e}_{n-1}^k}{\Delta t} = H(t_n, \textbf{u}_{n}^k)-H(t_n, \textbf{u}_{n}), & n=1,2,\cdots, N_t \\
    \textbf{e}_{0}^k=\alpha (\textbf{e}_{N_t}^k -\textbf{e}_{N_t}^{k-1}).
  \end{cases}
\end{equation}
For the Euclidean inner product, we have 
\begin{equation}\label{inequality_1}
\left\langle \frac{\textbf{e}_n - \textbf{e}_{n-1}}{\Delta t} , \textbf{e}_n\right\rangle \geq \frac{\parallel\textbf{e}_n\parallel_2^2 - \parallel\textbf{e}_{n-1}\parallel_2^2}{2\Delta t},
\end{equation}
for the error vector $\textbf{e}_n\in\mathbb{R}^{N_x}$, for all $n=1,2,\cdots, N_t$.
Taking inner product in \eqref{err_ch} with $\textbf{e}_n^k$, we get
\begin{equation}\label{discrete_error_CH}
\left\langle \frac{\textbf{e}_n^k - \textbf{e}_{n-1}^k}{\Delta t} , \textbf{e}_n^k\right\rangle = \left\langle H(t_n, \textbf{u}_{n}^k)-H(t_n, \textbf{u}_{n}), \textbf{u}_n^k - \textbf{u}_n\right\rangle.
\end{equation}
Then using the inequality \eqref{inequality_1} and \textit{one-sided} Lipschitz condition \eqref{revised_one-sided_Lip} for discrete time points in the equation \eqref{discrete_error_CH} we obtain
\begin{equation}\label{inequality_2}
\frac{\parallel\textbf{e}_n^k\parallel_2^2 - \parallel\textbf{e}_{n-1}^k\parallel_2^2}{2\Delta t} \leq -L\parallel\textbf{e}_n^k\parallel_2^2  
\end{equation}
Inequality \eqref{inequality_2} yields 
\begin{equation}\label{inequality_3}
\parallel\textbf{e}_{N_t}^k\parallel_2 \leq \sqrt{\left( \frac{1}{1+2L\Delta t}\right)^{N_t} } \parallel\textbf{e}_0^k\parallel_2  =  C_1\parallel\textbf{e}_0^k\parallel_2.
\end{equation}
Now from the second equation of \eqref{err_ch} we have $\parallel\textbf{e}_{0}^k\parallel_2=\vert\alpha\vert \parallel\textbf{e}_{N_t}^k\parallel_2 + \vert\alpha\vert \parallel\textbf{e}_{N_t}^{k-1}\parallel_2 \leq \vert\alpha\vert C_1 \parallel\textbf{e}_0^k\parallel_2 + \vert\alpha\vert C_1 \parallel\textbf{e}_0^{k-1}\parallel_2$, where on the second inequality we use \eqref{inequality_3} twice. Hence we have 
\begin{equation}\label{inequality_4}
\parallel\textbf{e}_0^k\parallel_2 \leq \frac{\vert\alpha\vert C_1}{1-\vert\alpha\vert C_1} \parallel\textbf{e}_0^{k-1}\parallel_2
\end{equation}
From \eqref{inequality_1},  we have $\max\limits_{n=1,2,\cdots,N_t}\parallel \textbf{e}_n^k \parallel_2\leq  \parallel \textbf{e}_0^0 \parallel_2$ and together with \eqref{inequality_4} gives the required convergence estimate.
\end{proof}

\section{Numerical Illustration}\label{Section5}
In this section we present the numerical experiments of the PinT algorithms for the linear and non-linear time dependent fourth order PDEs, which are mentioned earlier. For the PinT algorithm, the iterations start from a random initial guess and stop as the error measured in $\parallel u-u^k\parallel_{L^{\infty}(0,T;L^2(\Omega))}$ or $\parallel u-u^k\parallel_{L^{\infty}(0,T;L^{\infty}(\Omega))}$ reaches a tolerance of $1e {-10}$, where $u$ is the discrete sequential solution and $u^k$ is the discrete PinT solution at $k-$th iteration. We  have taken the random initial solution to perform numerical experiments for the PDEs under consideration.

\subsection{Biharmonic problem}
For numerical experiments of \eqref{modelproblem1a} in 1D we consider the domain $\Omega=(0,1)$. In Figure \ref{fig1}-\ref{fig2} we have shown the error curves and theoretical estimates by fixing $T=1, \Delta t=0.001$ and the PinT parameter $\alpha=0.001$ and varying mesh size $h$ and time integrator. The error measured in Figure \ref{fig1} is in the norm $L^{\infty}(0,T;L^2(\Omega))$ as in Theorem \ref{thm1} and we say the corresponding theoretical error bound as 'continuous bound'. The error measured in Figure \ref{fig2} is in the norm $L^{\infty}(0,T;L^{\infty}(\Omega))$ as in Theorem \ref{thm2} and Theorem \ref{thm3} and we say the corresponding theoretical error bound as 'discrete bound'. From Figure \ref{fig1} and Figure \ref{fig2} we observe that continuous bound is more accurate in Backward-Euler whereas discrete bound is more precise in Trapezoidal rule. From Figure \ref{fig1} and Figure \ref{fig2} it is also clear that the proposed PinT method is mesh independent.
\begin{figure}[h!]
    \centering %
\begin{subfigure}{0.3\textwidth}
  \includegraphics[height=3.5cm, width=4cm]{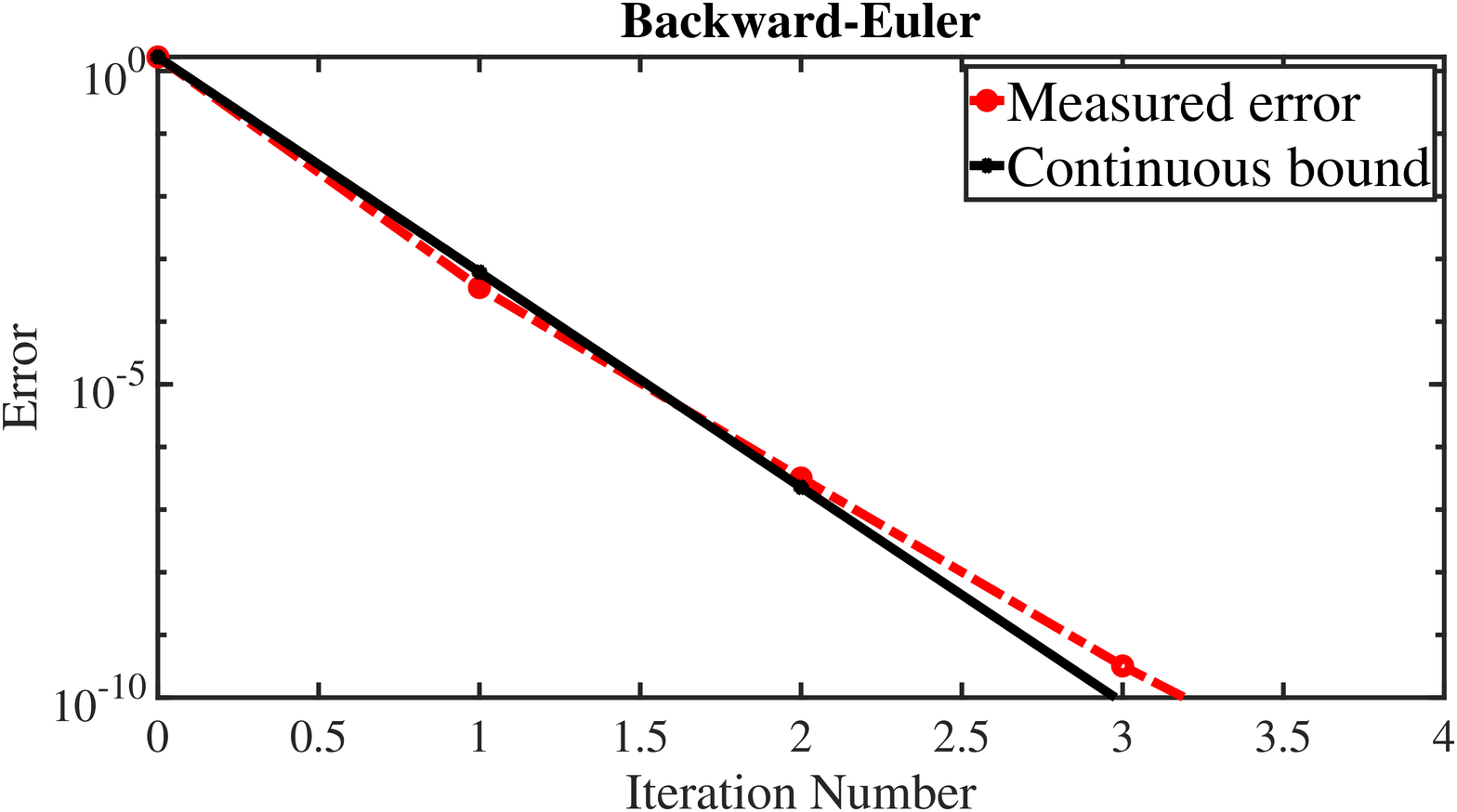}
\end{subfigure}\hfil 
\begin{subfigure}{0.3\textwidth}
  \includegraphics[height=3.5cm, width=4cm]{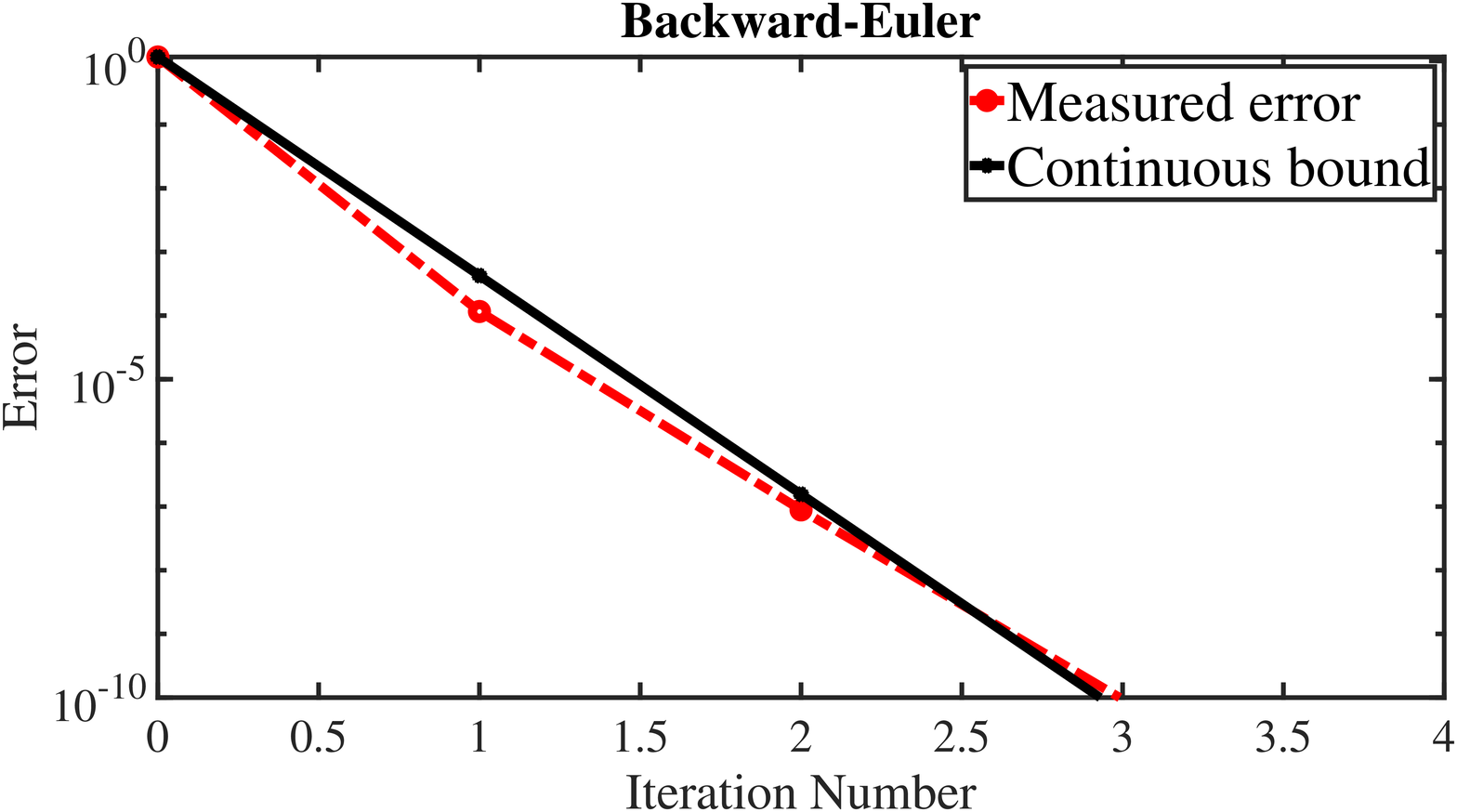}
\end{subfigure}\hfil 
\begin{subfigure}{0.3\textwidth}
  \includegraphics[height=3.5cm, width=4cm]{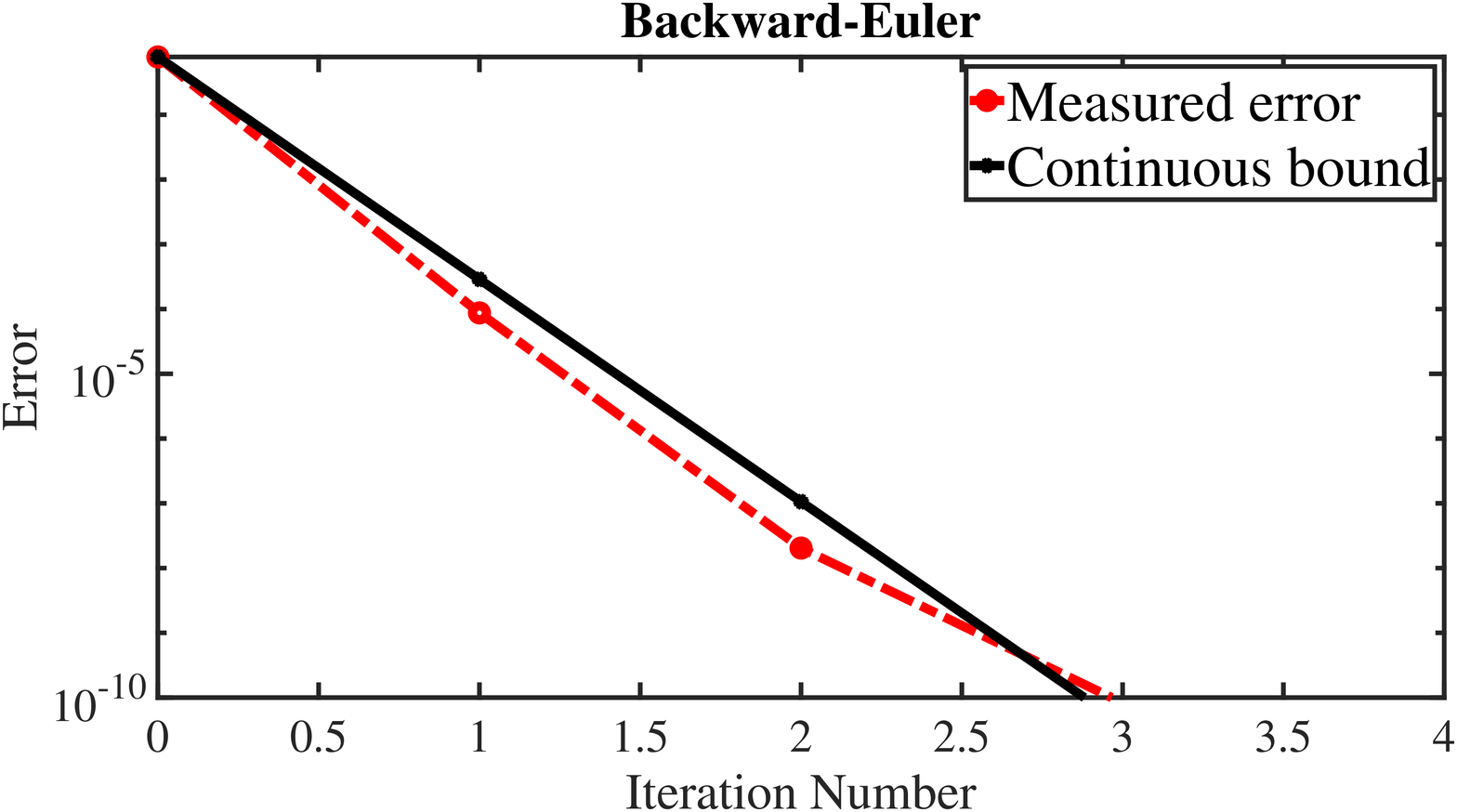}
\end{subfigure}

\medskip
\begin{subfigure}{0.3\textwidth}
  \includegraphics[height=3.5cm, width=4cm]{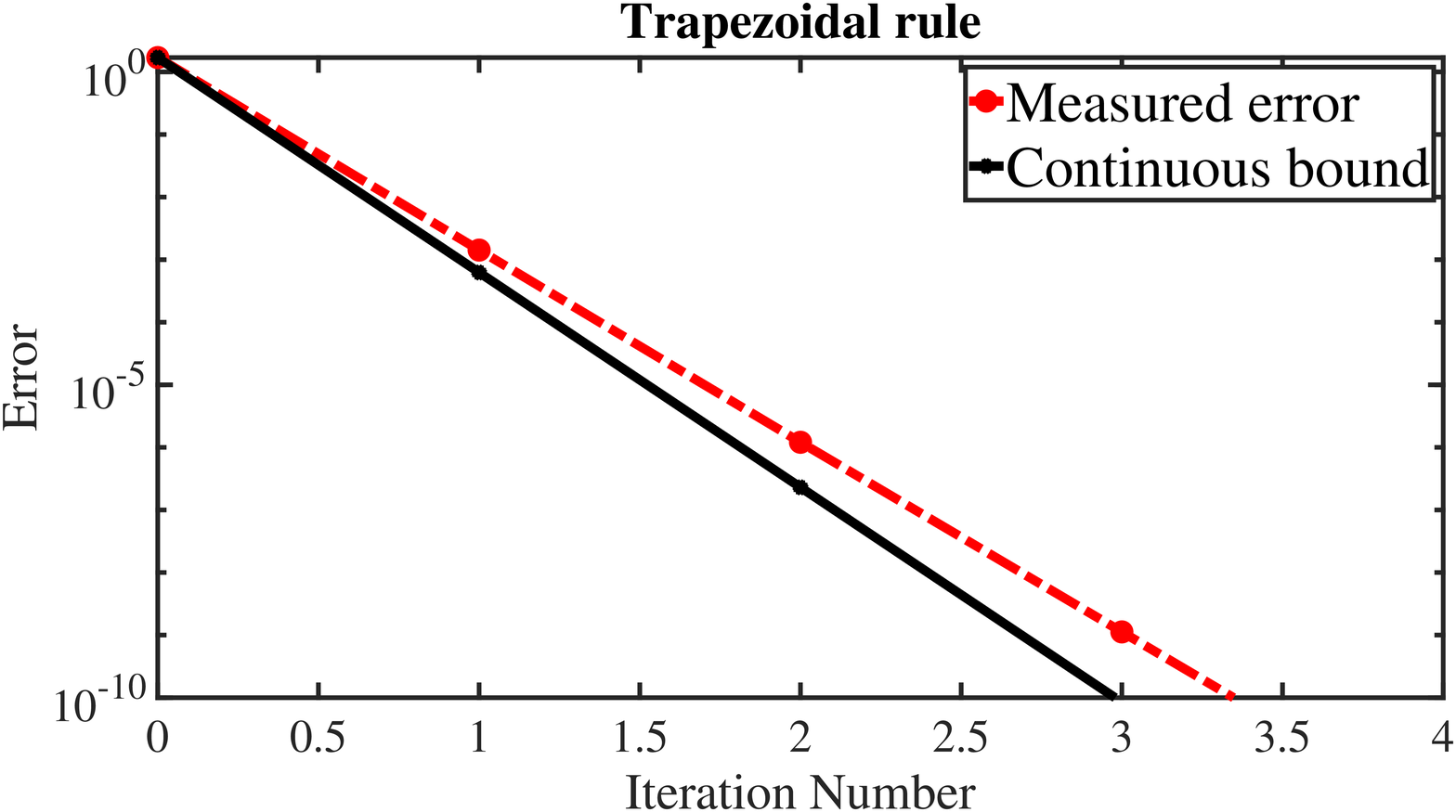}
\end{subfigure}\hfil 
\begin{subfigure}{0.3\textwidth}
  \includegraphics[height=3.5cm, width=4cm]{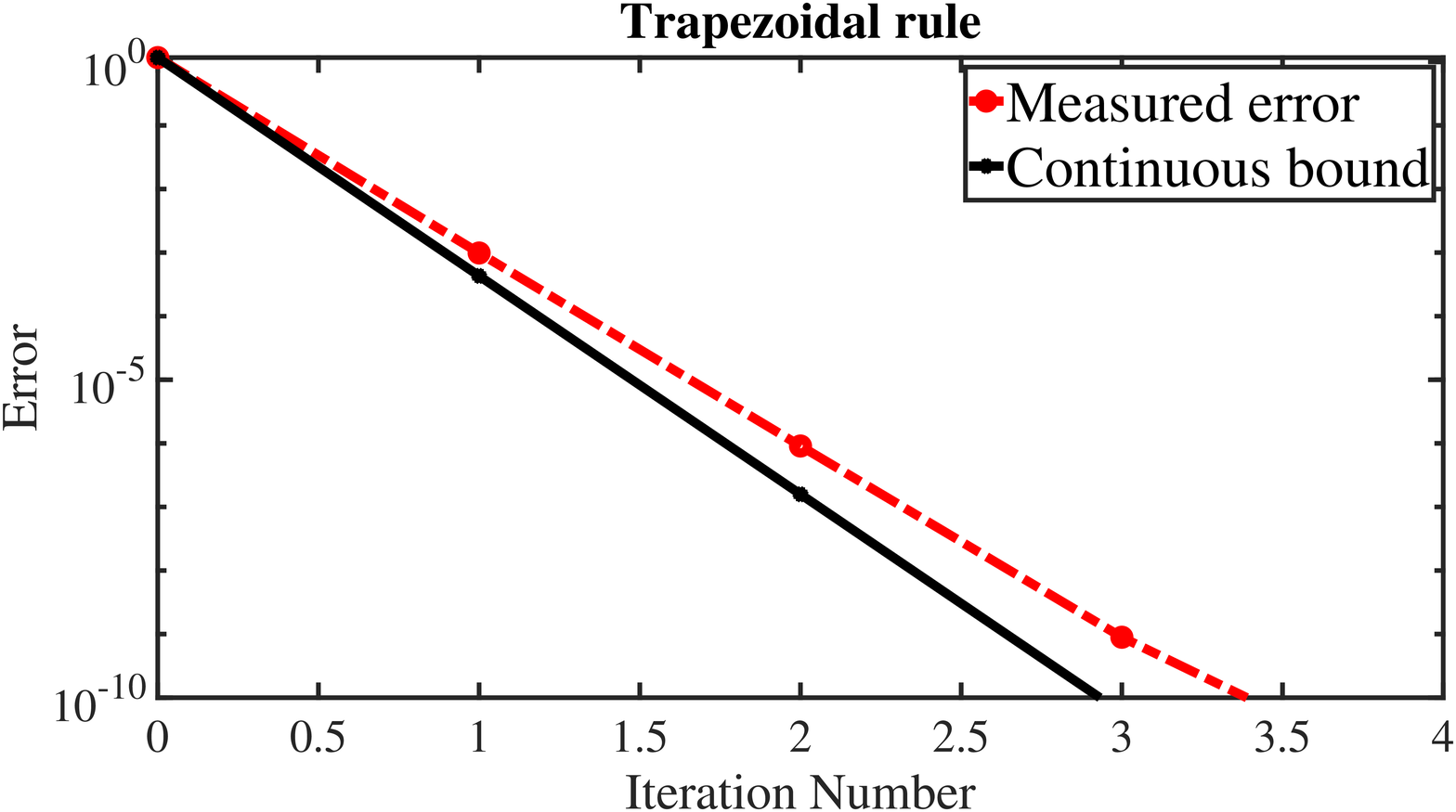}
\end{subfigure}\hfil 
\begin{subfigure}{0.3\textwidth}
  \includegraphics[height=3.5cm, width=4cm]{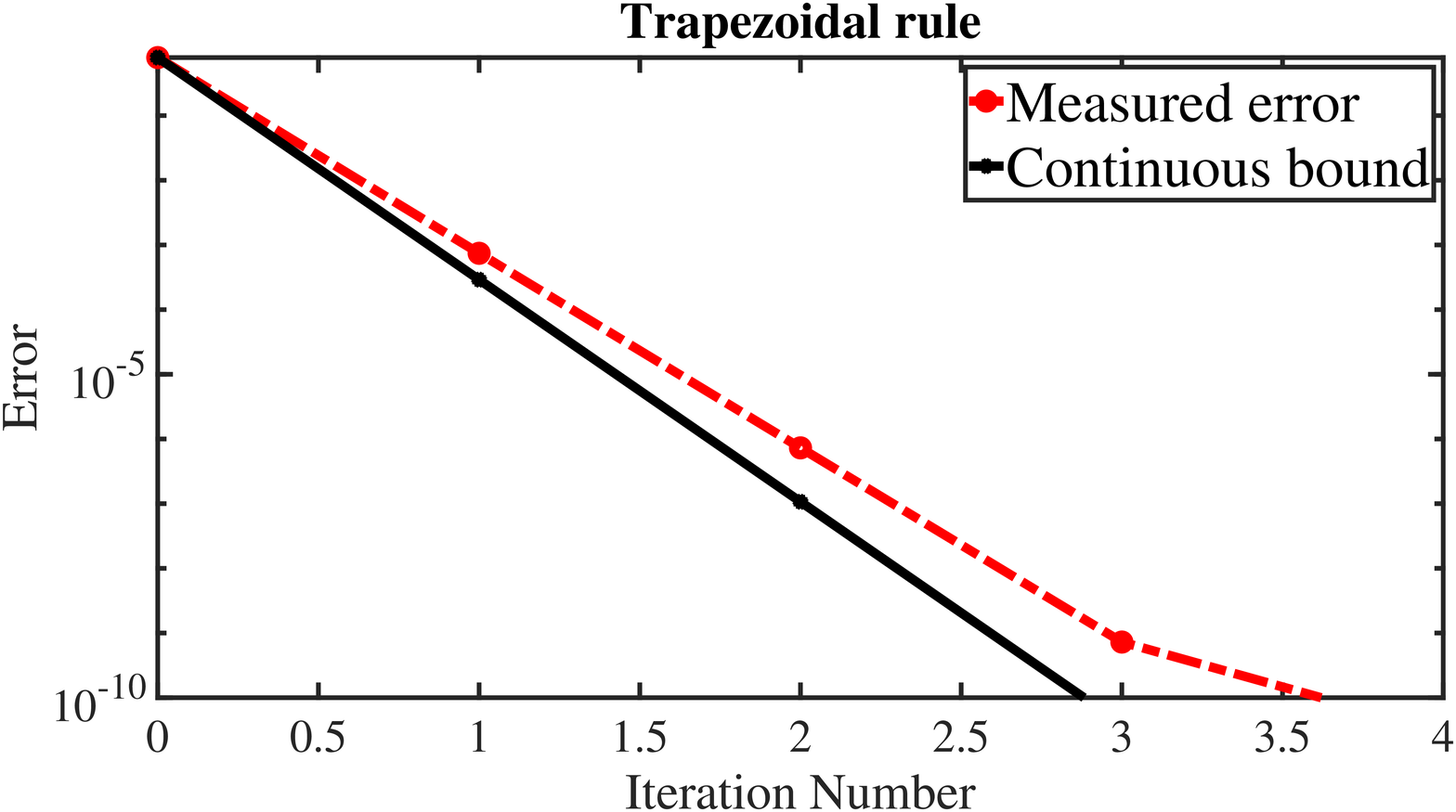}
\end{subfigure}
\caption{Comparison of numerical error and theoretical error estimates for the biharmonic problem with different mesh size $h$ and fixed PinT parameter $\alpha=0.001$. From left to right:  $h=1/64, h=1/128$ and $h=1/256$. Top row: $\theta=1$; Bottom row: $\theta=1/2$. }
\label{fig1}
\end{figure}
\begin{figure}[h!]
    \centering
    \subfloat{{\includegraphics[height=4cm,width=3.5cm]{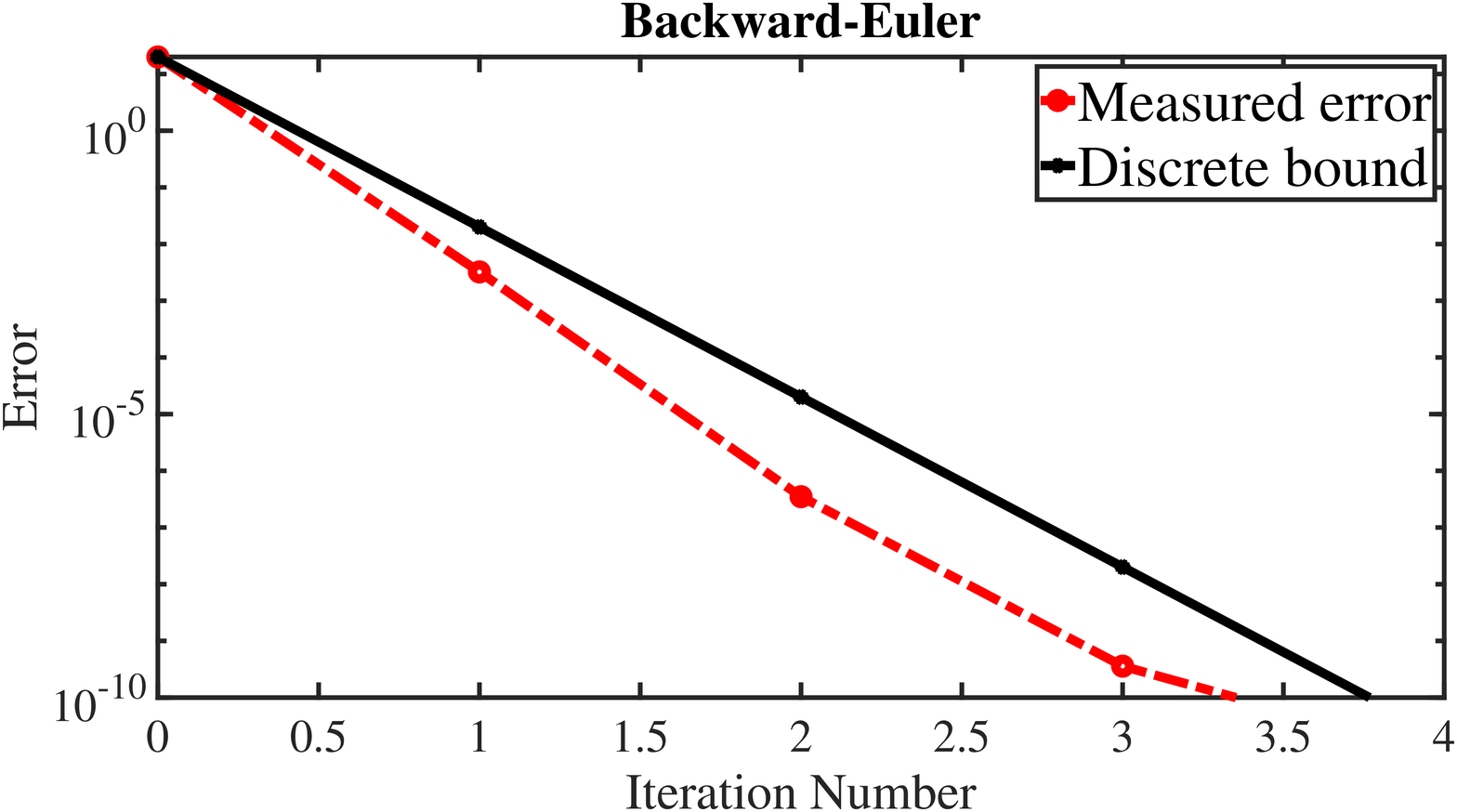} }}
     \subfloat{{\includegraphics[height=4cm,width=3.5cm]{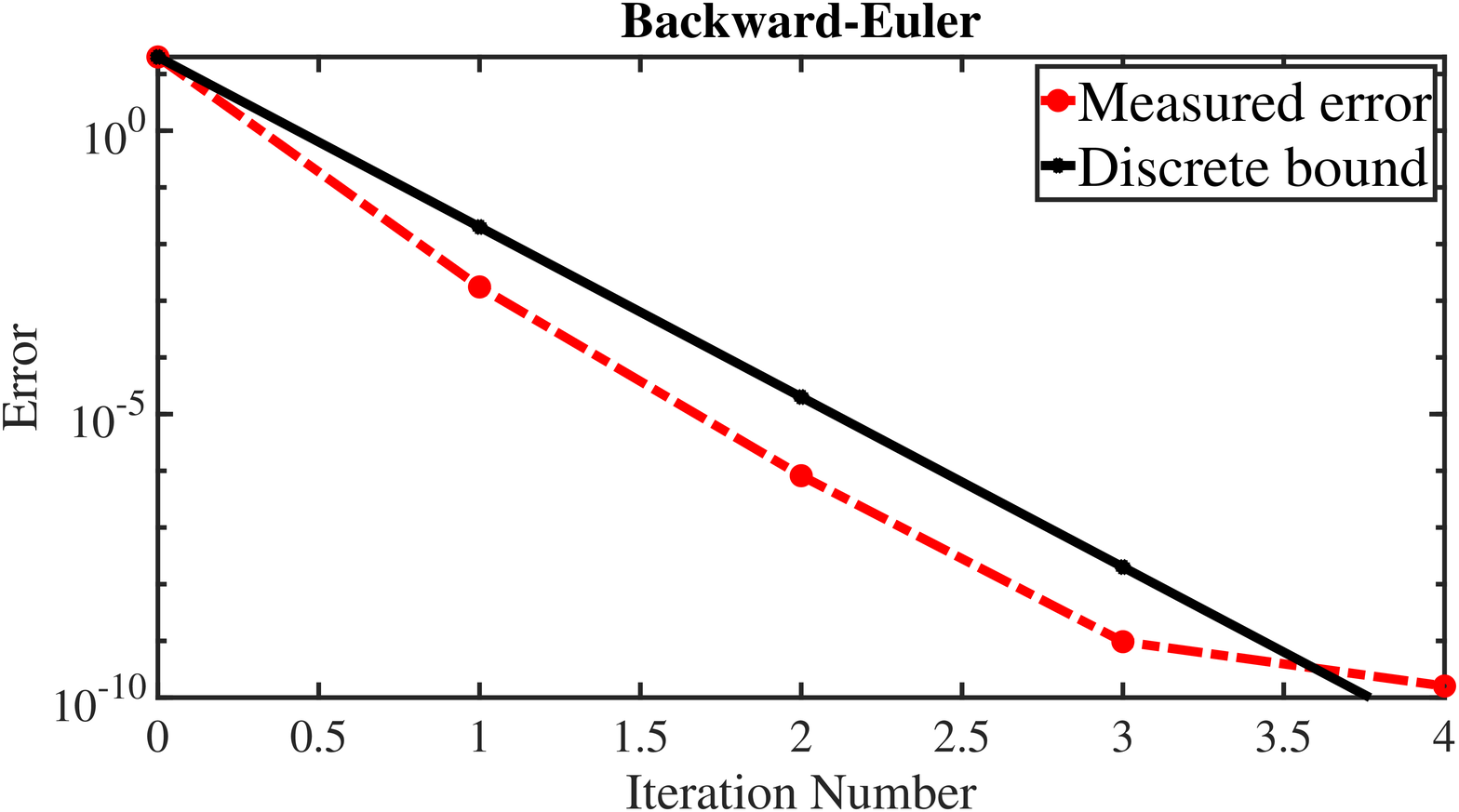} }}
     \subfloat{{\includegraphics[height=4cm,width=3.5cm]{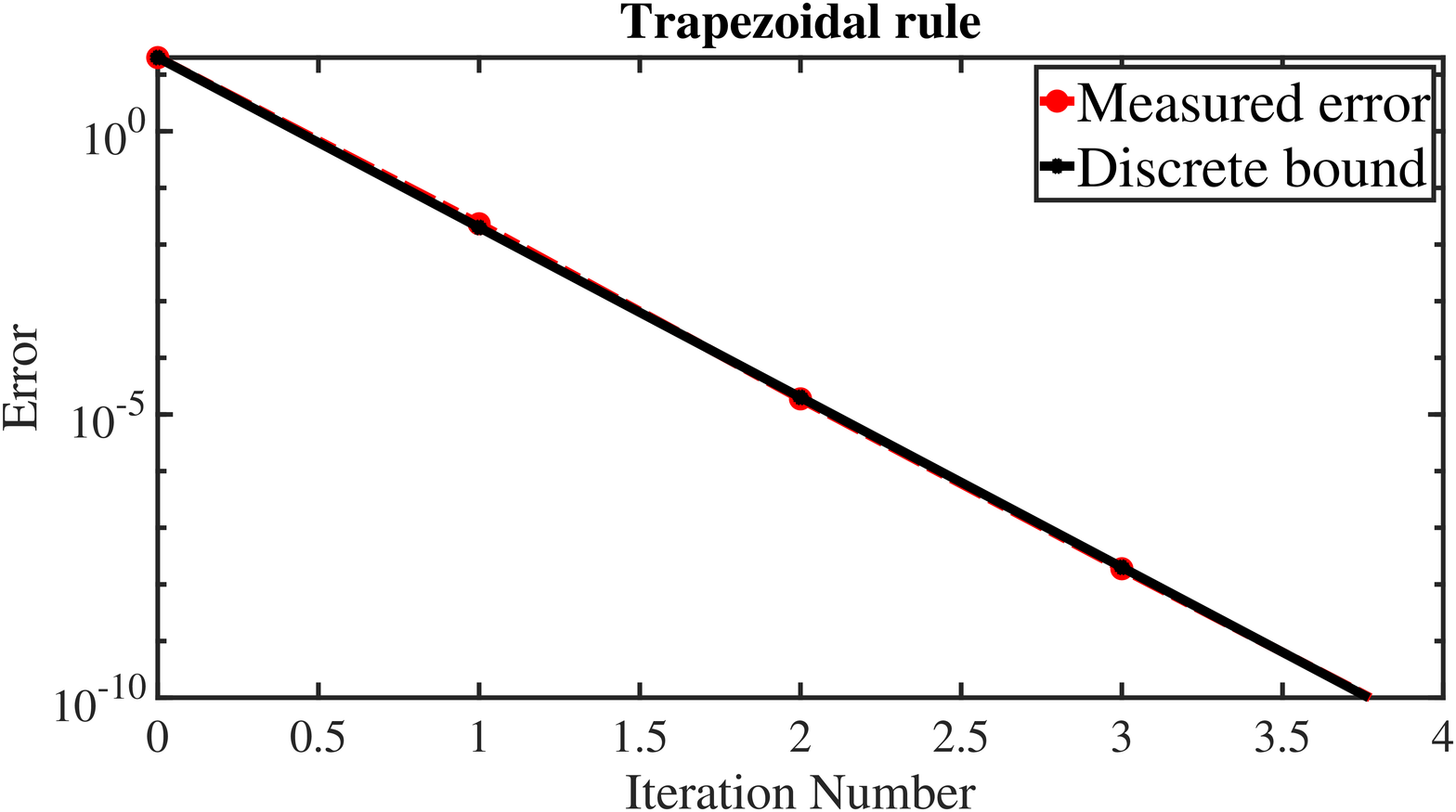} }}
     \subfloat{{\includegraphics[height=4cm,width=3.5cm]{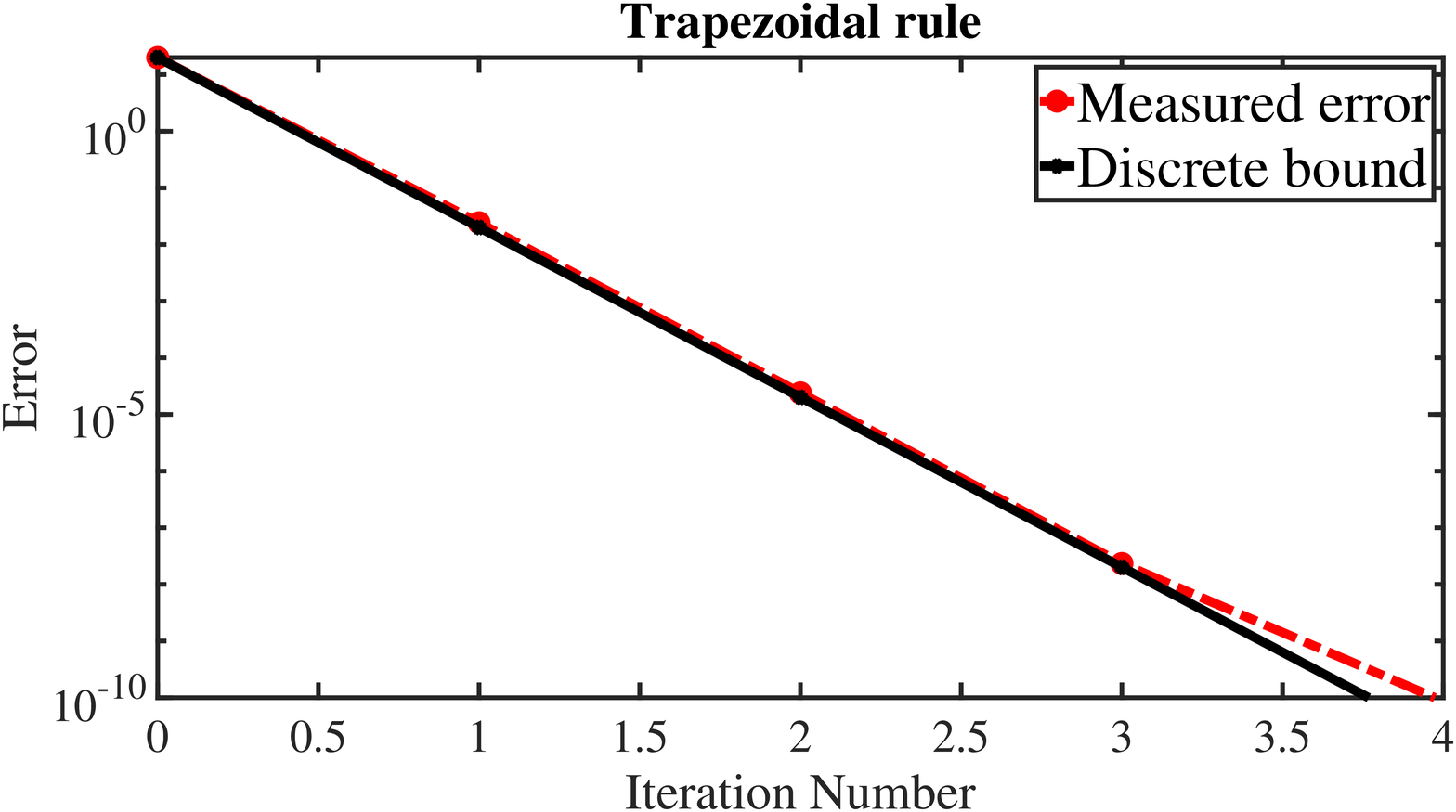} }}
    \caption{Comparison of theoretical error estimates and numerical error with different mesh size $h$. First (with $h=1/64$) and second (with $h=1/128$) figure: Backward-Euler; Third (with $h=1/64$) and fourth (with $h=1/128$) figure: Trapezoidal rule.}
    \label{fig2}
\end{figure}
Next we show how the convergence behaviour depends on $\alpha, \Delta t, T$ in terms of iteration count. The left plot of Figure \ref{fig3} is obtained by fixing $\Delta t=0.001, T=1, h=1/64$ and varying $\alpha$ for different time integrator. It is evident from the plot  that smaller $\alpha$ is good choice for the method to be robust. The middle plot of the Figure \ref{fig3} explains the dependence of the convergence on time step $\Delta t$. For example if we take $\Delta t=10^{-6}$, i.e., $N_t=10^{6}$, the method takes four iterations to converge, but at each iteration one can solve $N_t=10^{6}$ time steps in parallel. It also shows the independence of the temporal step size. The right plot of Figure \ref{fig3} describes the behaviour of PinT method for longer time window $T$ for fixed $\Delta t=0.001, \alpha=0.001, h=1/64$. In Figure \ref{fig3} both time integrators have almost same convergence behaviour.
\begin{figure}[h!]
    \centering
     \subfloat{{\includegraphics[height=4cm,width=4cm]{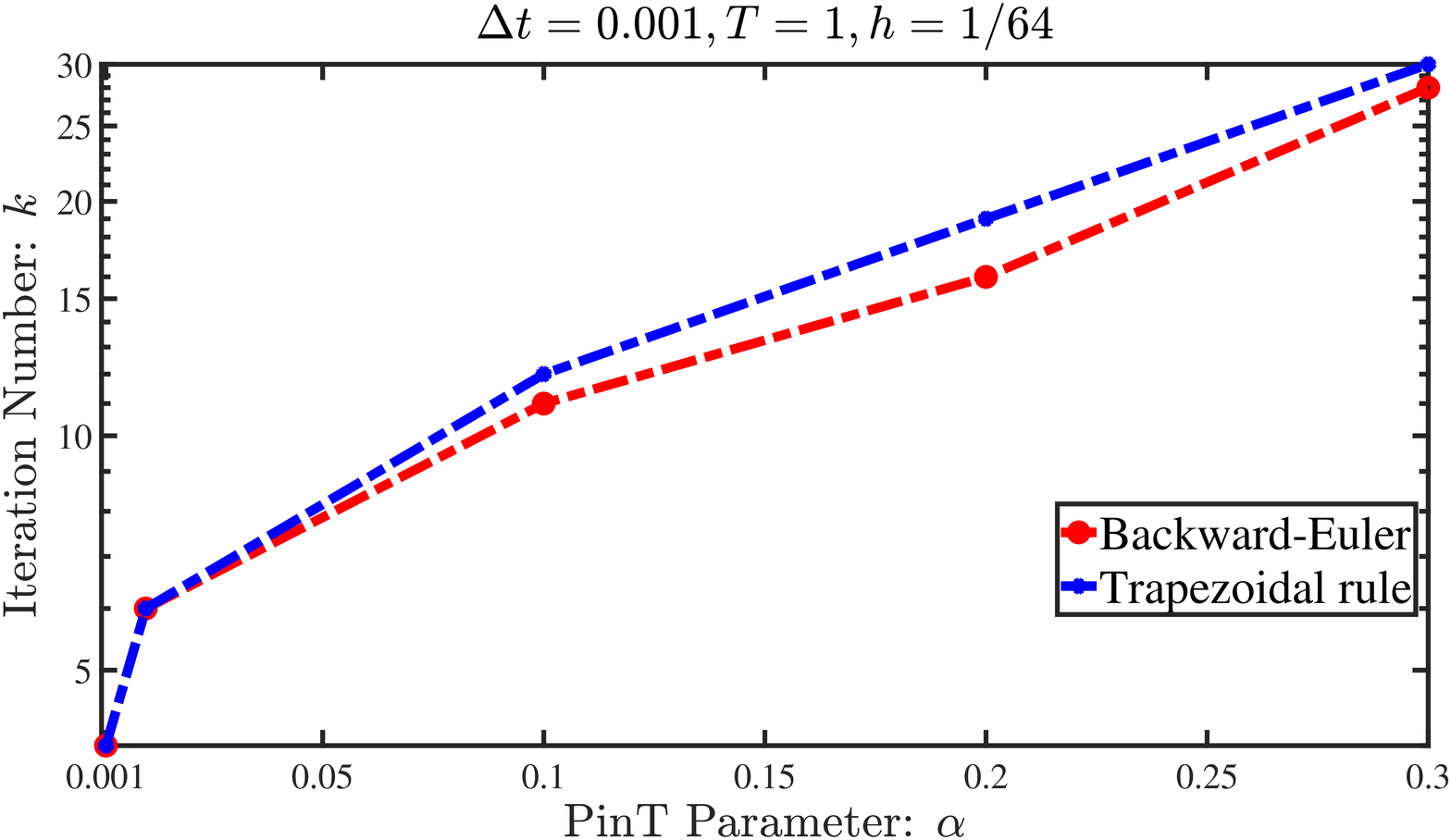} }}
     \subfloat{{\includegraphics[height=4cm,width=4cm]{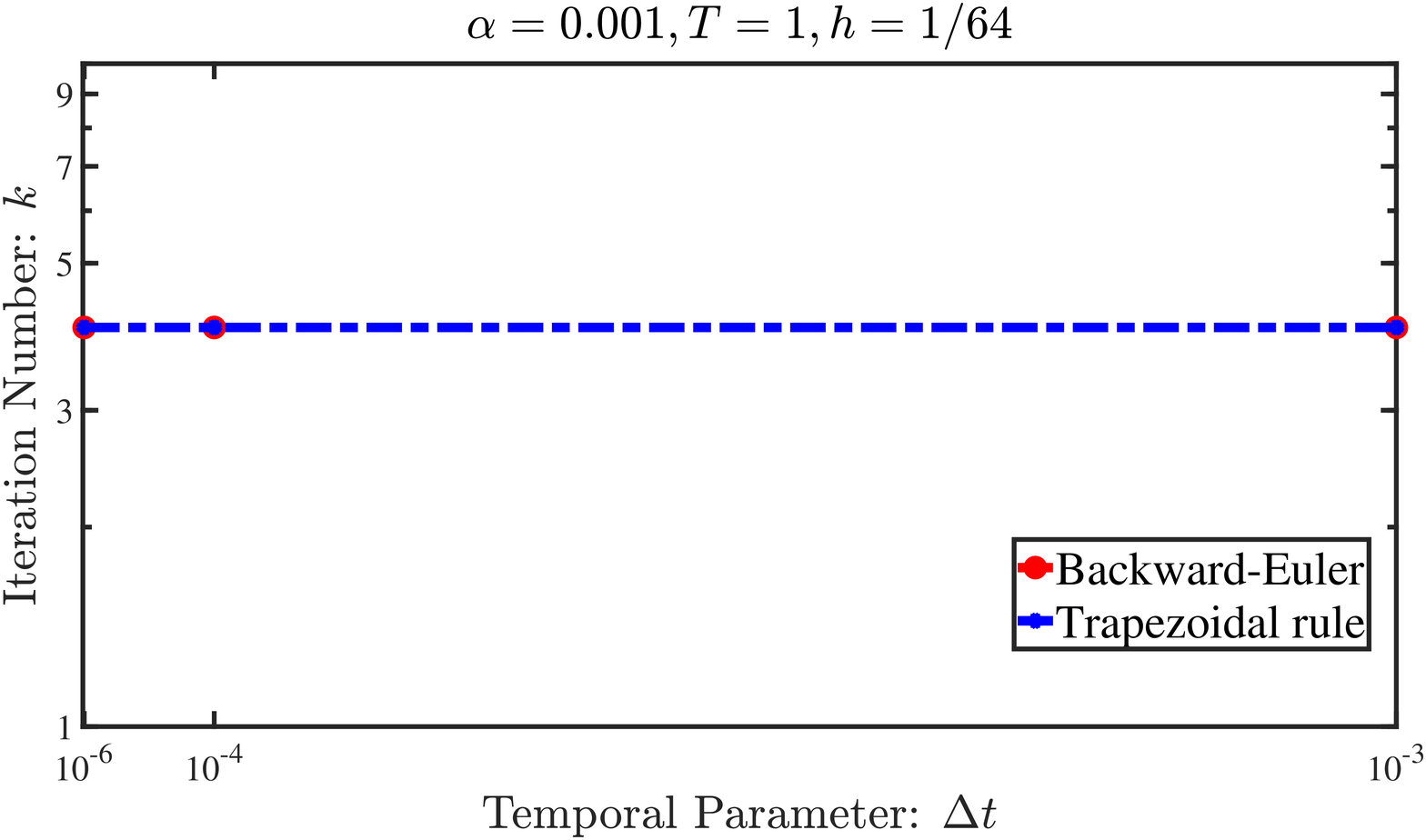} }}
     \subfloat{{\includegraphics[height=4cm,width=4cm]{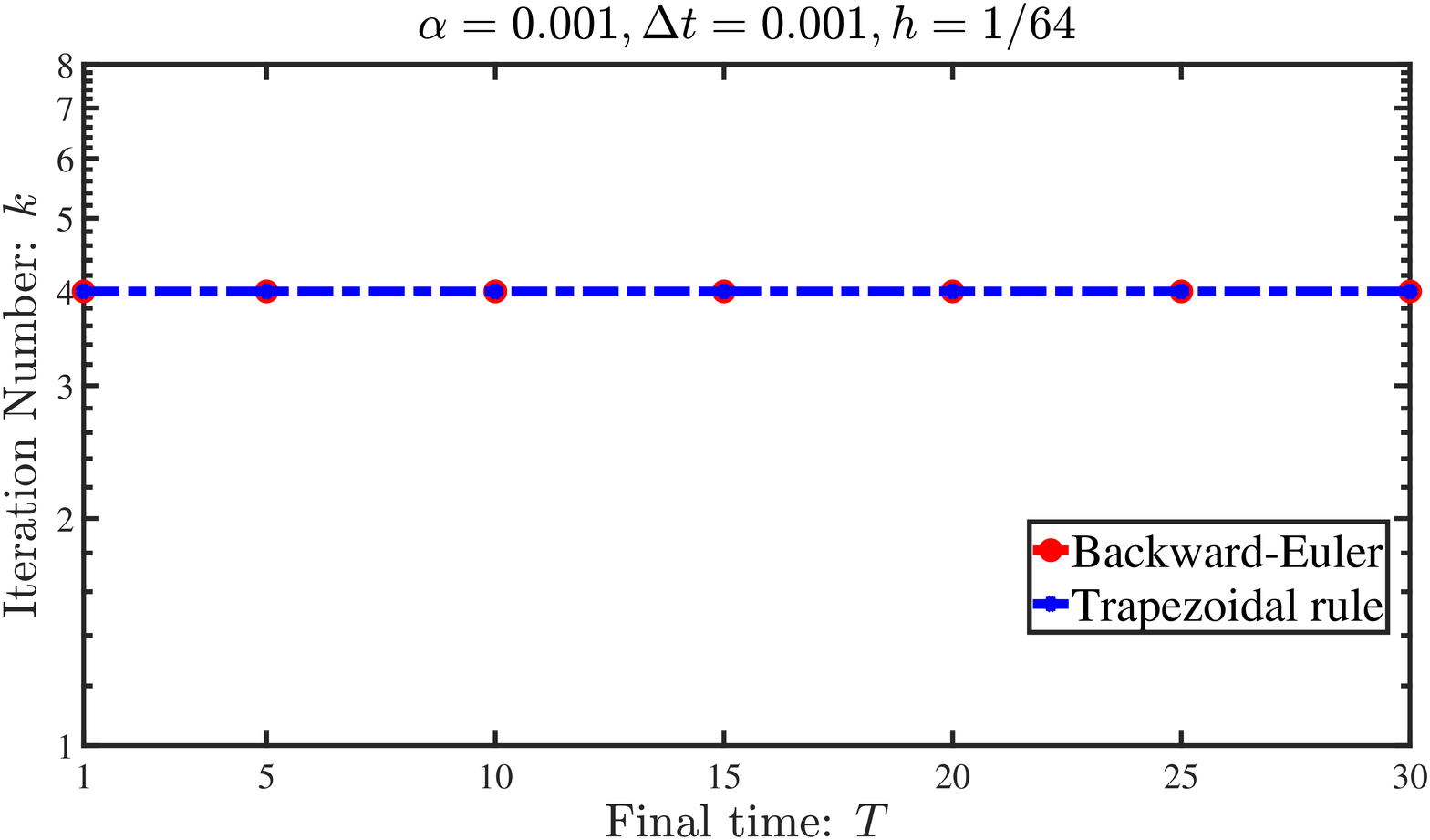} }}
    \caption{Dependency of convergence behaviour of PinT algorithm on $\alpha$ (left), $\Delta t$ (middle), $T$ (right).}
    \label{fig3}
\end{figure}
We also tested the PinT algorithm \eqref{pintiter1} with modified matrix $A$ (due to different boundary conditions) for the following boundary conditions
\[
(i)\; u=0, \partial_{\nu}u=0; (ii)\; u=0, \Delta u=0; (iii)\; \partial_{\nu}u=0, \Delta u=0,
\]
for which the problem \eqref{modelproblem1a} is also well-posed, and we observe similar numerical convergence behaviour.

In 2D for \eqref{modelproblem1a} we take the spatial domain $\Omega=(0, \pi)\times(0, \pi)$, with mesh size $h=1/10$ on both the direction. Figure \ref{fig4} shows the numerical error and theoretical error estimates given in Theorem \ref{thm3} by fixing $T=1, \Delta t=0.001$ and the PinT parameter $\alpha=0.001$. We observe analogous convergence behaviour in 2D for different choice of parameters ($h, \Delta t, \alpha$) as in the case of 1D experiments. Next we present numerical results for the linearized CH equation, which is more general to biharmonic operator. 
\begin{figure}[h!]
    \centering
     \subfloat{{\includegraphics[height=4cm,width=6cm]{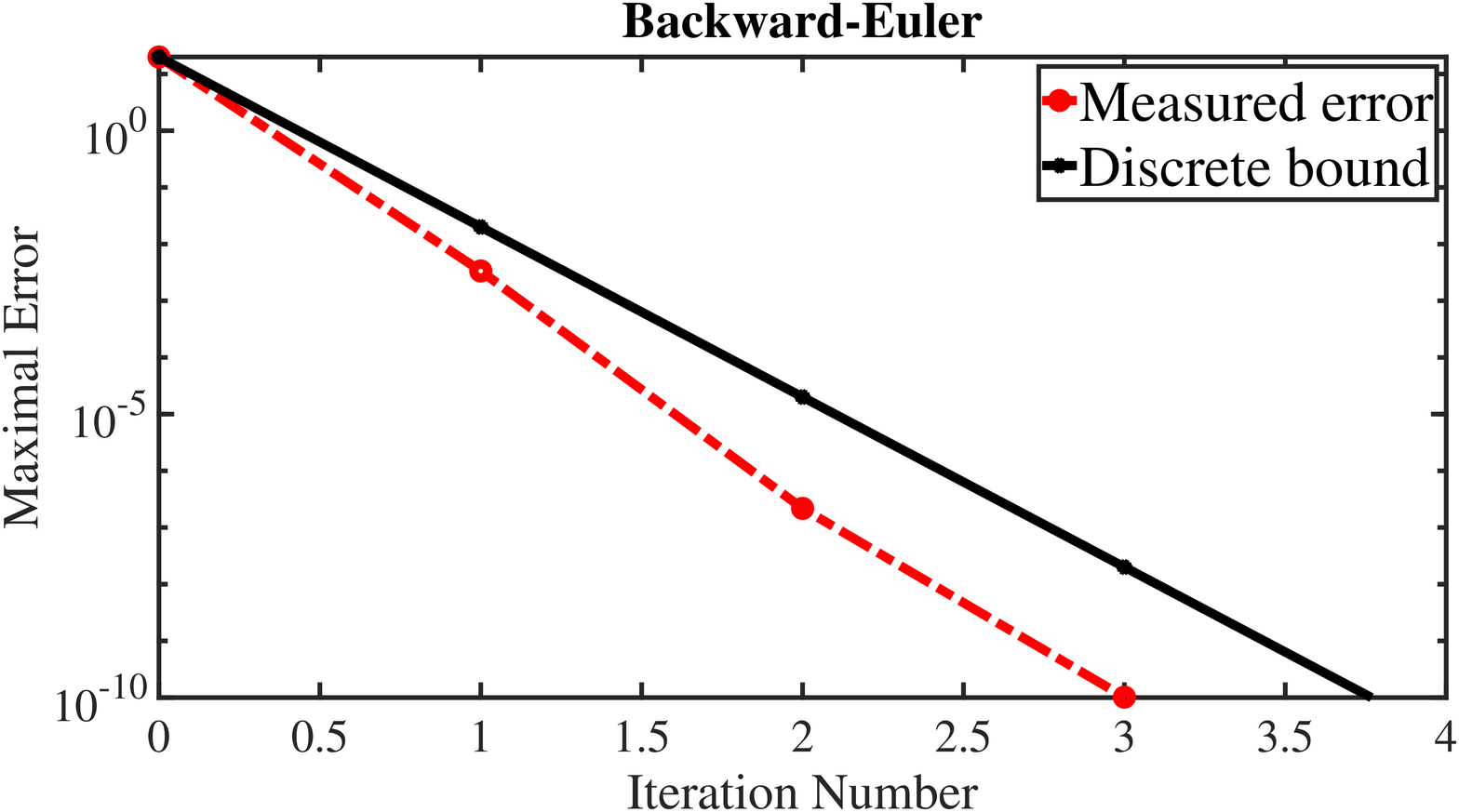} }}
     \subfloat{{\includegraphics[height=4cm,width=6cm]{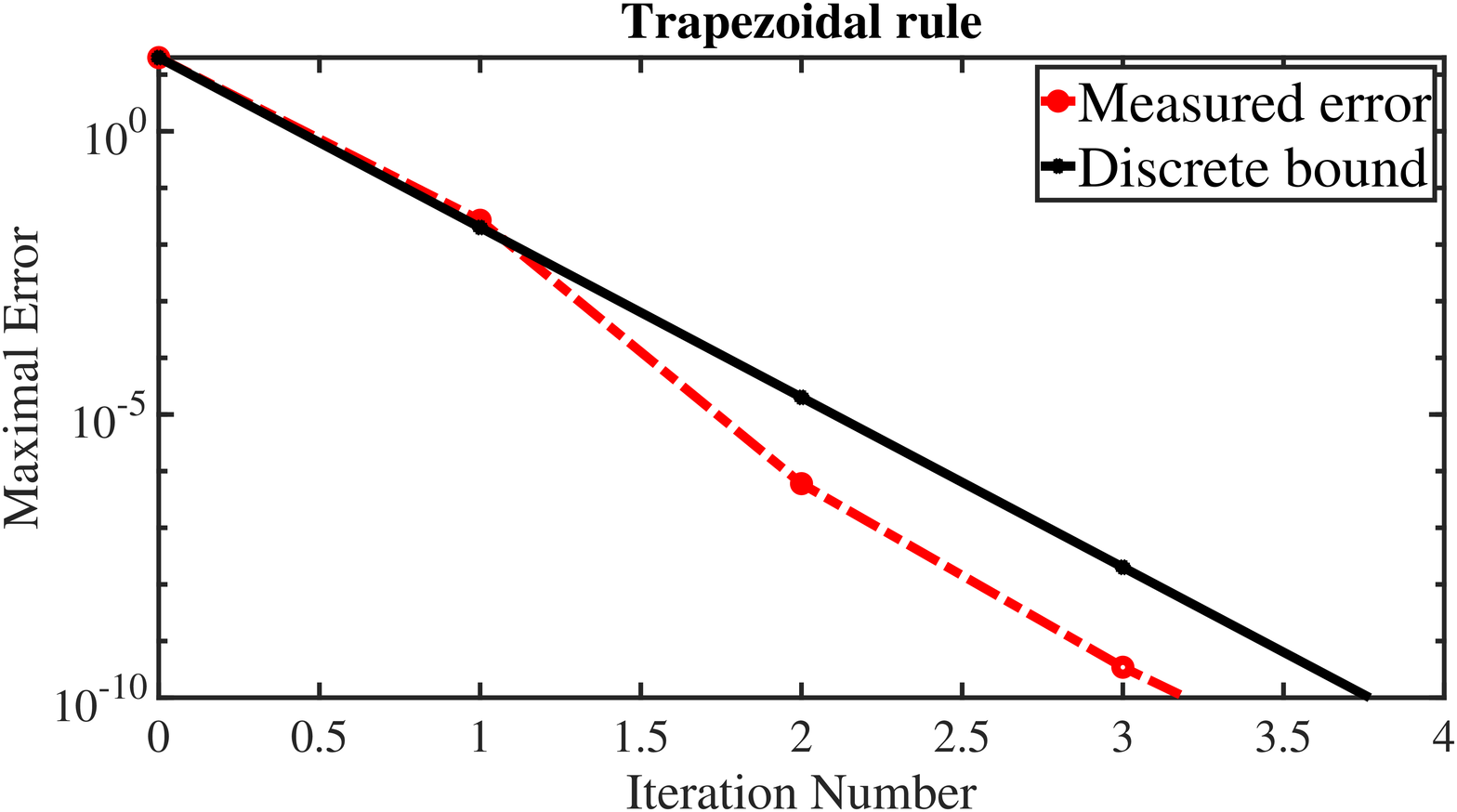} }}
    \caption{Comparison of numerical error and theoretical error bound for the biharmonic problem with $h=1/10, \Delta t=0.001, T=1$ and fixed PinT parameter $\alpha=0.001$. On the left: Backward-Euler; On the right: Trapezoidal rule.}
    \label{fig4}
\end{figure}

\subsection{Linearized CH equation}
We consider the Linearized CH equation \eqref{modelproblem2a} in 1D for the domain $\Omega=(0,1)$. Figure \ref{linch_fig1} shows the numerical error and theoretical error estimate by fixing $T=1, \Delta t=0.0001$ and the PinT parameter $\alpha=0.001$, and varying mesh size $h$ and time integrator. The error is measured in the norm $L^{\infty}(0,T;L^2(\Omega))$ as in Theorem \ref{thm4}. Dependency of the PinT parameter $\alpha$ for different time integrators can be seen in Figure \ref{linch_fig2}, and one can see that the algorithm yields two step convergence for very small choice of $\alpha$. For the experiment performed in Figure \ref{linch_fig1} and Figure \ref{linch_fig2} we take $N_t=10,000$, so that one computes $N_t$ steps at one go per iteration in parallel. In Figure \ref{linch_fig3} we see the dependency on  $\beta$ and final time $T$ of the time integrators. Both the time integrators are immune to choice of $\beta$, but Trapezoidal rule integrator is bit sensitive to long time simulation.
\begin{figure}[h!]
    \centering
    \subfloat{{\includegraphics[height=4cm,width=3.5cm]{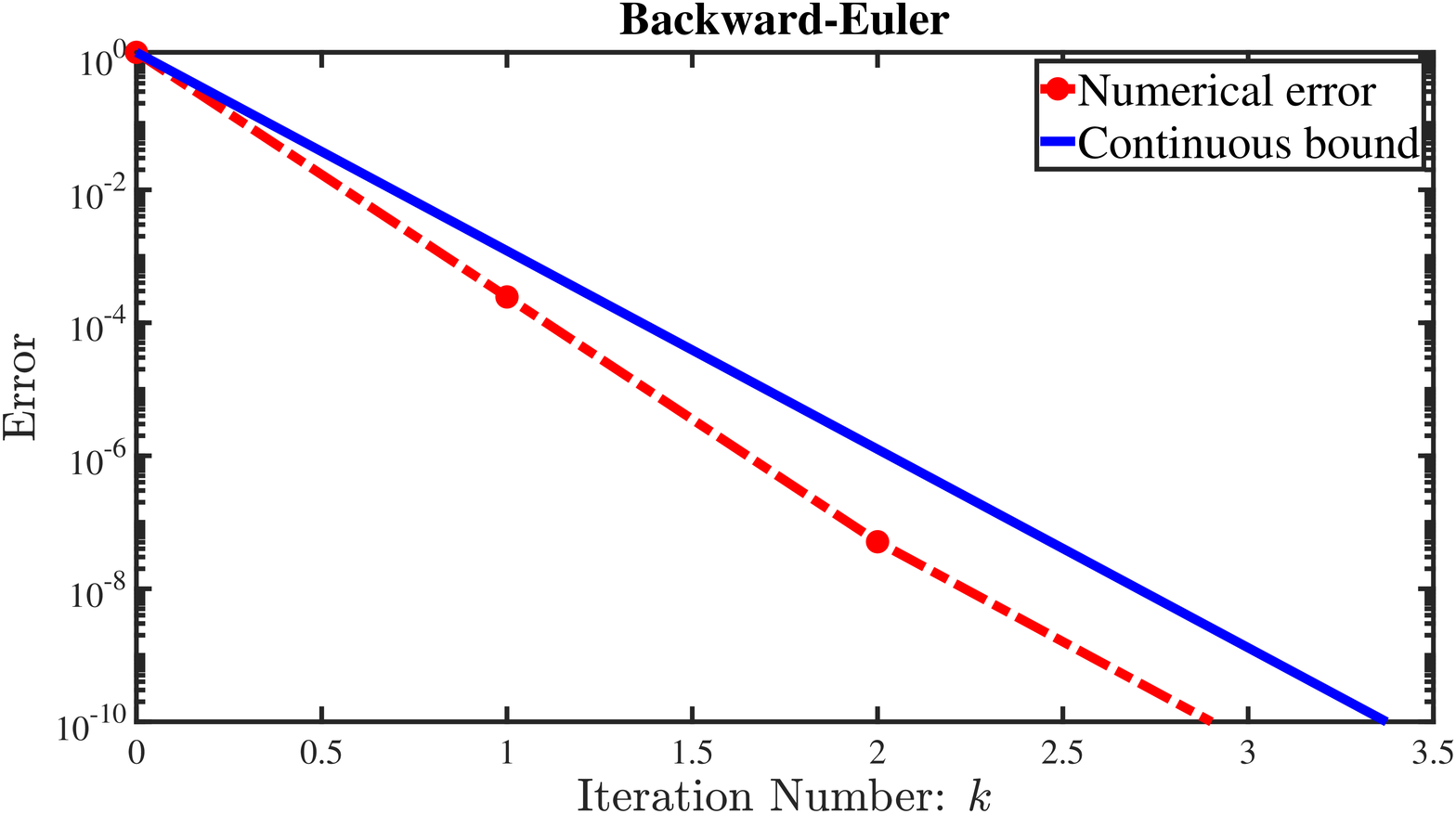} }}
     \subfloat{{\includegraphics[height=4cm,width=3.5cm]{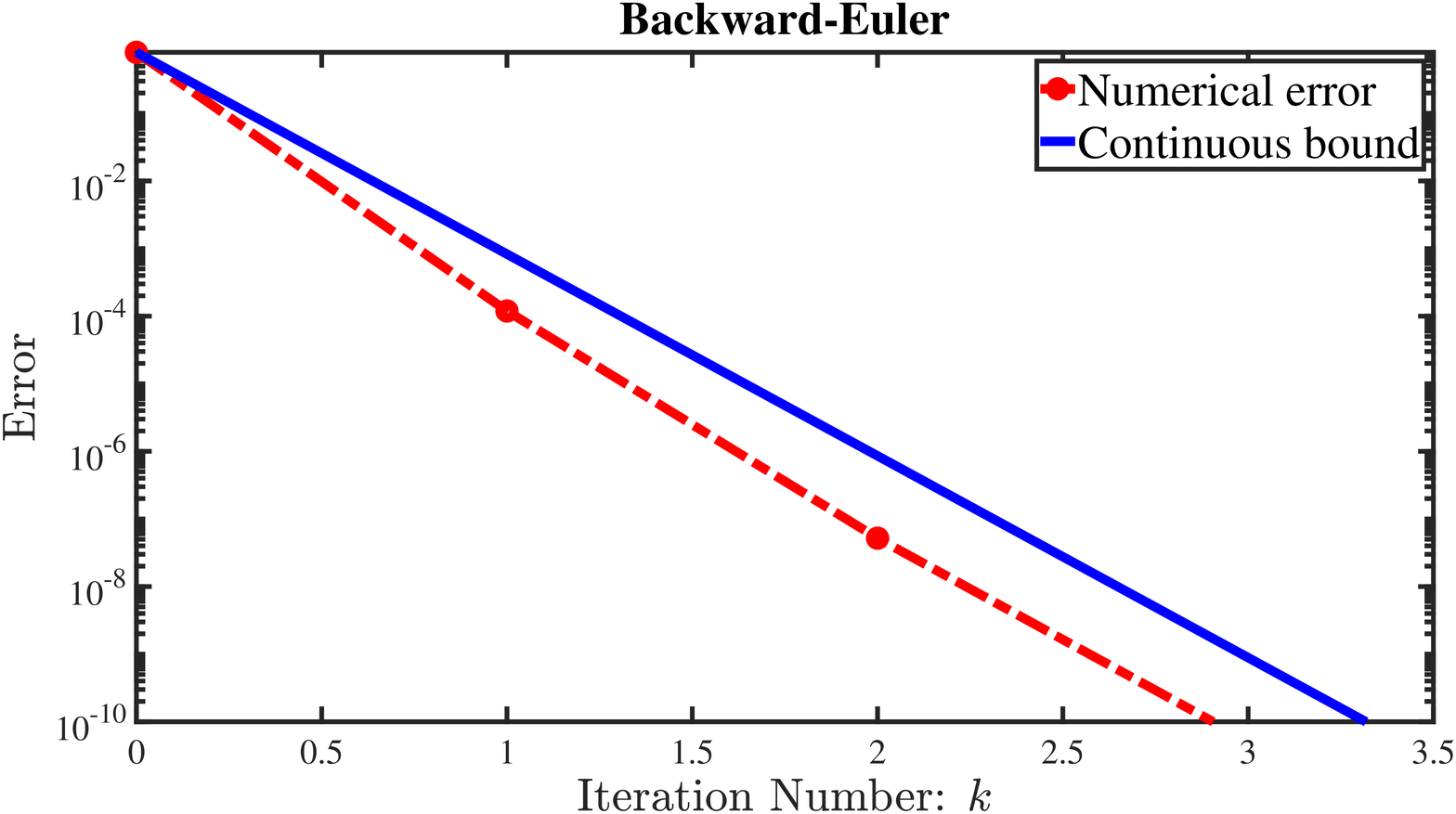} }}
     \subfloat{{\includegraphics[height=4cm,width=3.5cm]{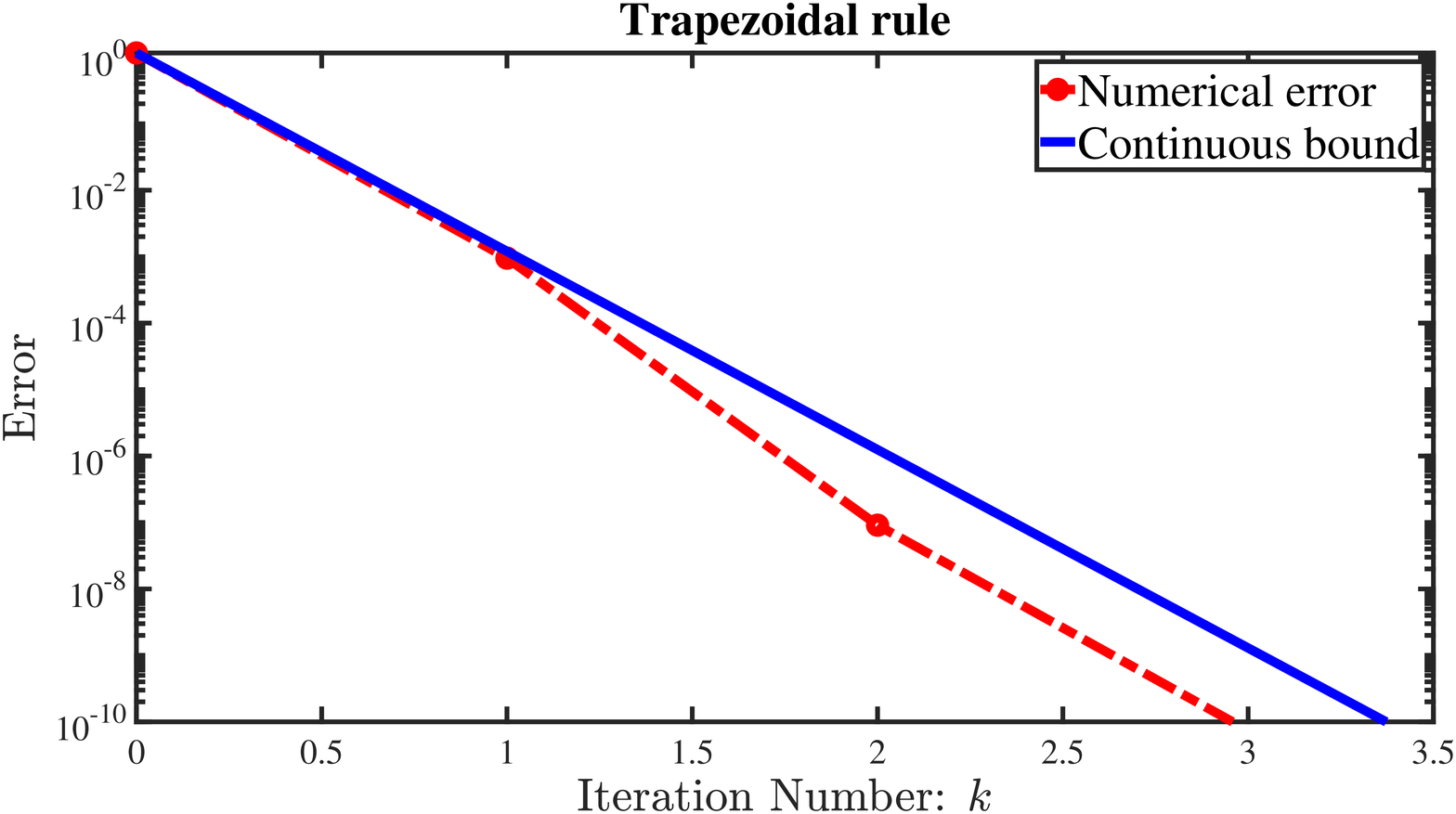} }}
     \subfloat{{\includegraphics[height=4cm,width=3.5cm]{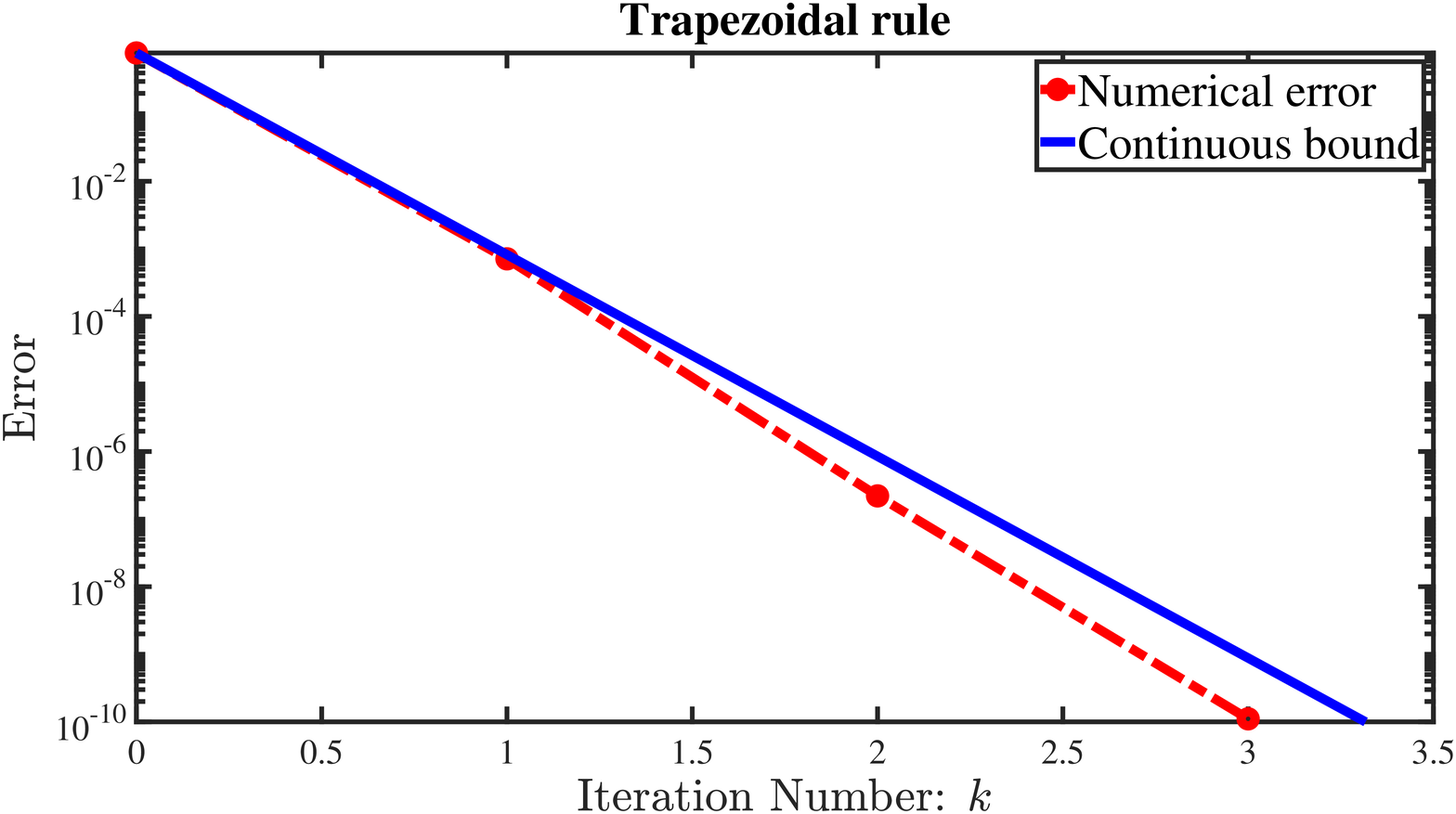} }}
    \caption{Comparison of theoretical estimates and numerical error with different mesh size $h$. First (with $h=1/128$) and second (with $h=1/256$) figure: Backward-Euler; Third (with $h=1/128$) and fourth (with $h=1/256$) figure: Trapezoidal rule.}
    \label{linch_fig1}
\end{figure}
\begin{figure}[h!]
    \centering
     \subfloat{{\includegraphics[height=4cm,width=6cm]{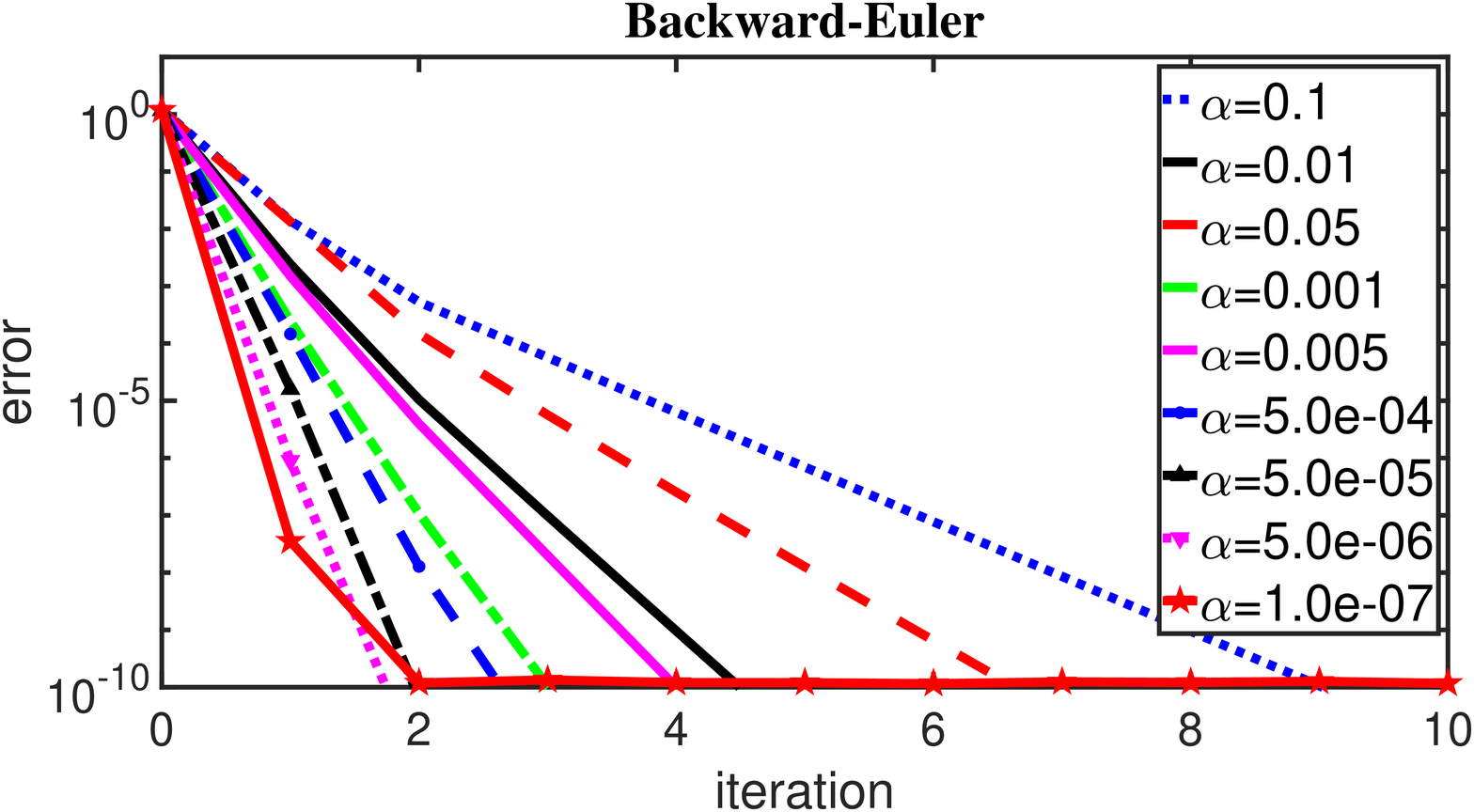} }}
     \subfloat{{\includegraphics[height=4cm,width=6cm]{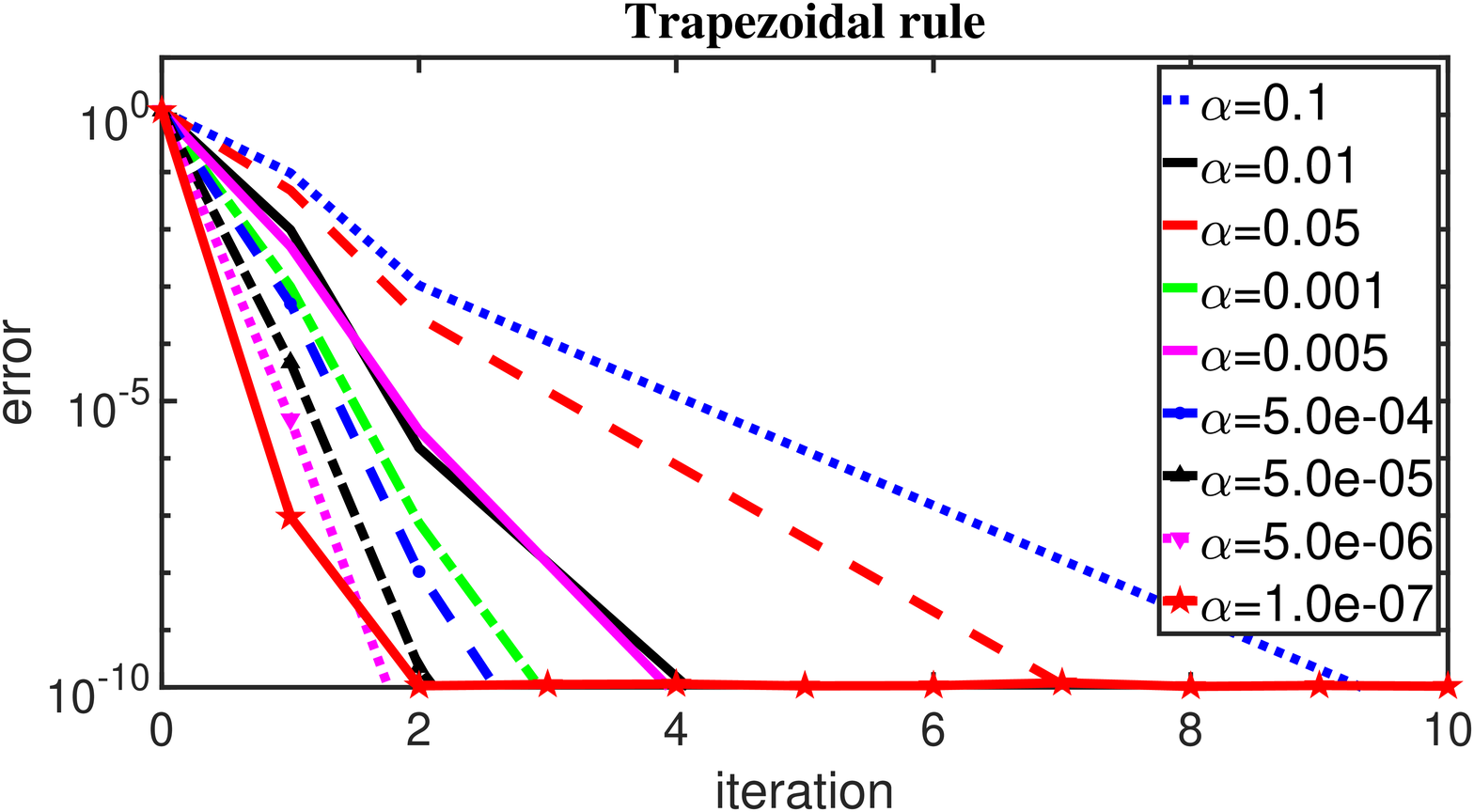} }}
    \caption{Convergence for various $\alpha$ with fixed parameters $h=1/128, \Delta t=0.0001, \beta=0.2, \epsilon^2=0.01, T=1$. On the left: Backward-Euler; On the right: Trapezoidal rule.}
    \label{linch_fig2}
\end{figure}
\begin{figure}[h!]
    \centering
     \subfloat{{\includegraphics[height=4cm,width=4cm]{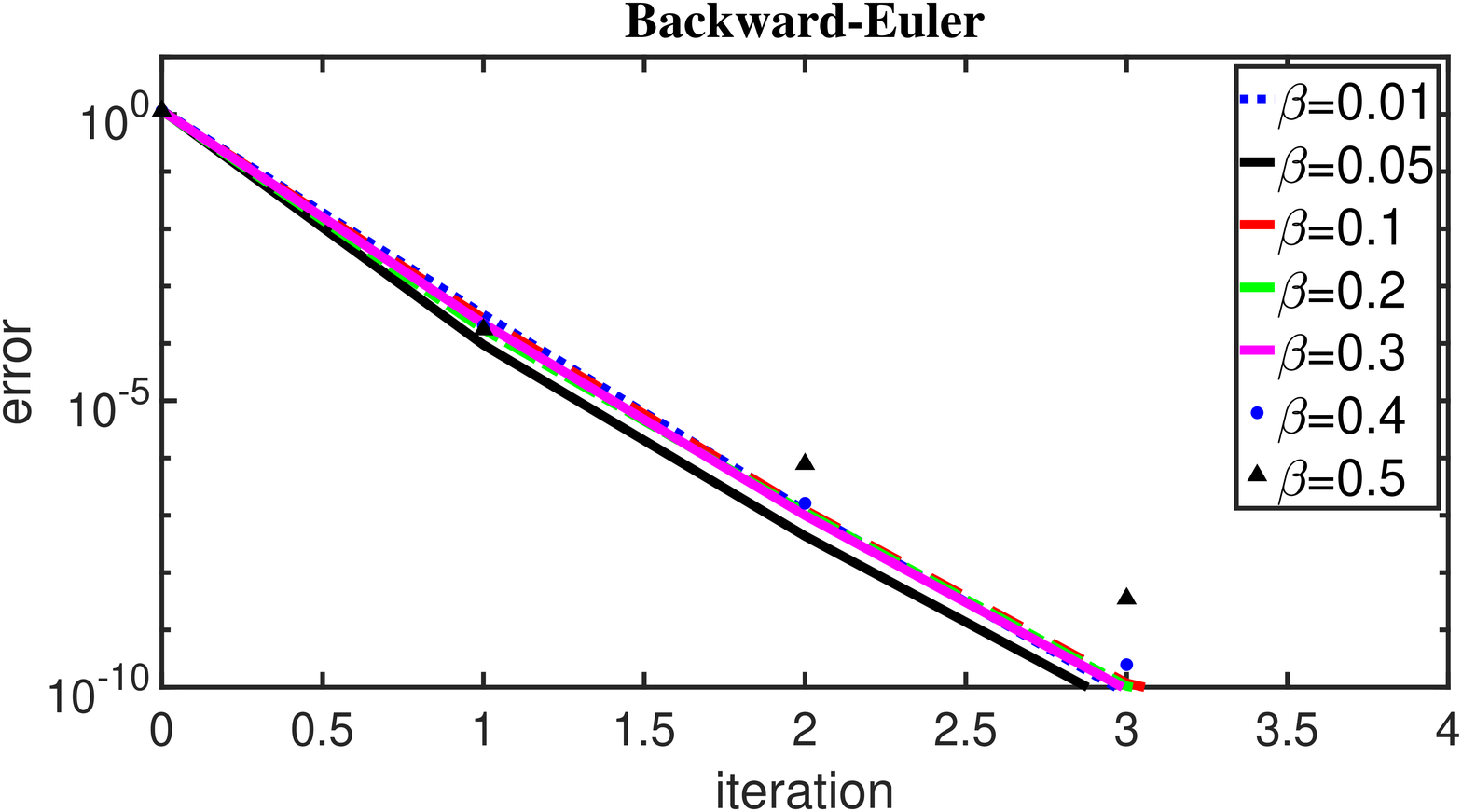} }}
     \subfloat{{\includegraphics[height=4cm,width=4cm]{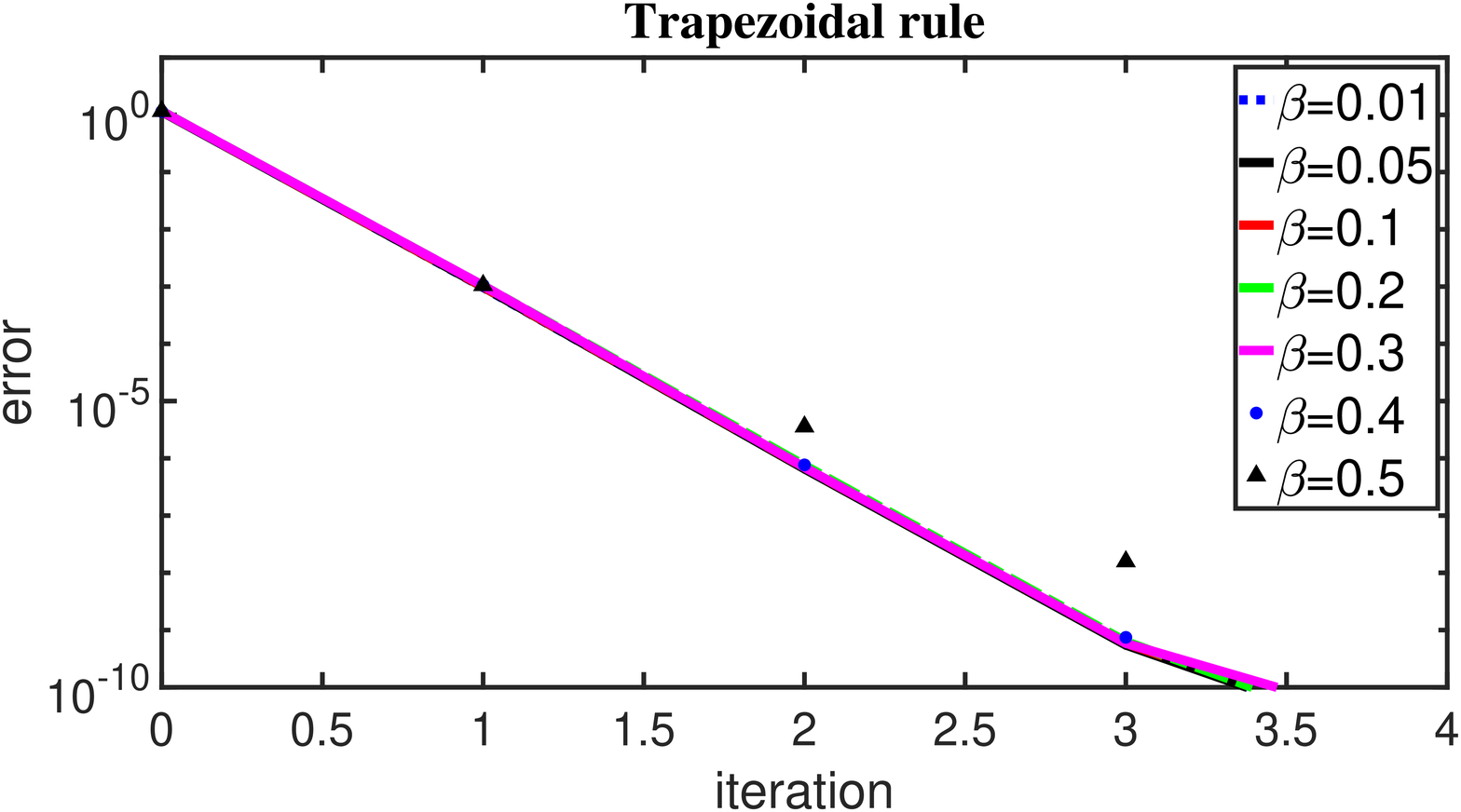} }}
     \subfloat{{\includegraphics[height=4cm,width=4cm]{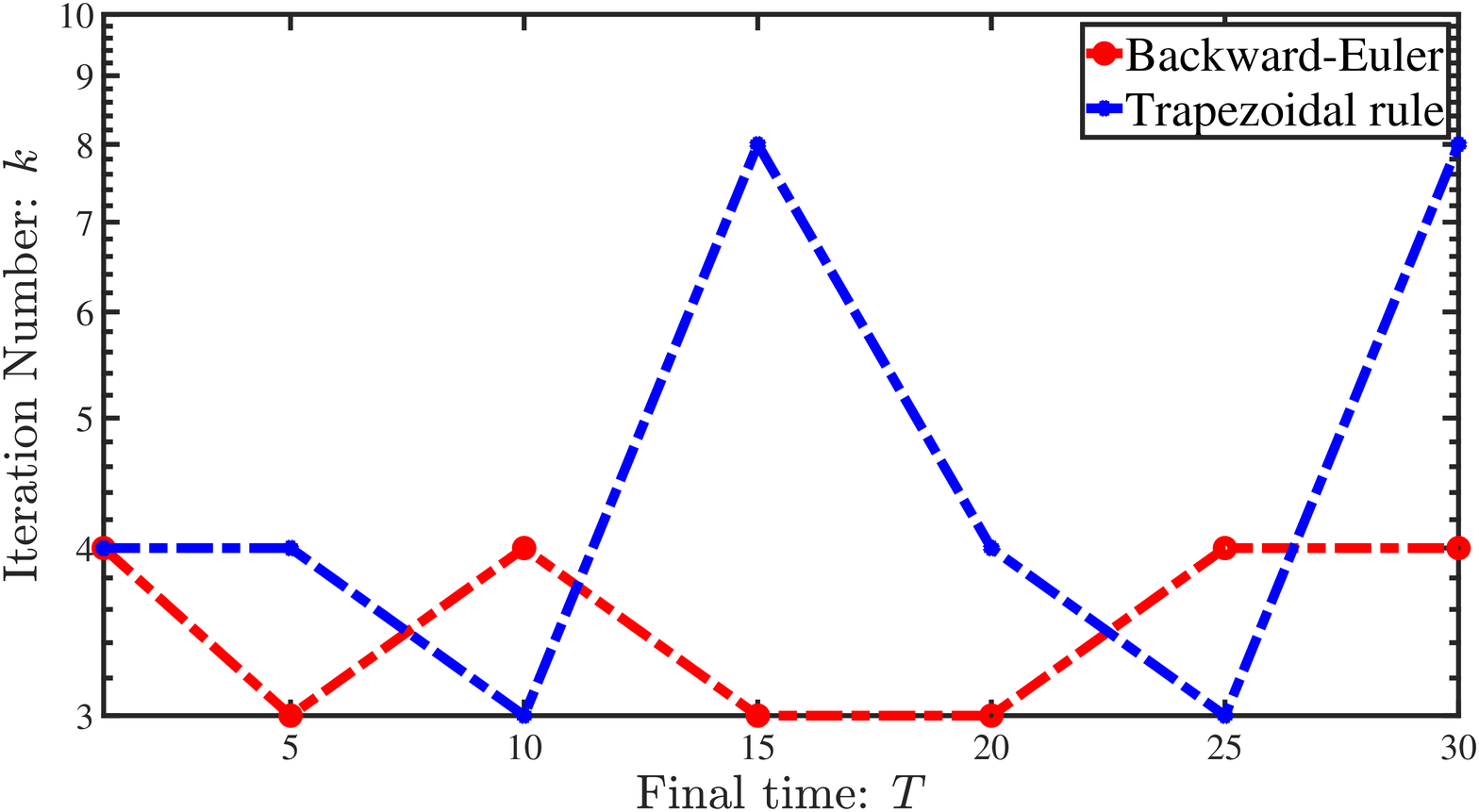} }}
    \caption{Convergence for various $\beta$ and $T$ with fixed $h=1/128, \Delta t=0.0001, \epsilon^2=0.01$. On the left and middle: for different $\beta$; On the right: comparison for different $T$ with $\beta=0.2$.}
    \label{linch_fig3}
\end{figure}

For the Linearized CH equation \eqref{modelproblem2a} in 2D we take the spatial domain $\Omega=(0, \pi)\times(0, \pi)$, having an equidistant mesh size $h$ on both the direction. Figure \ref{linch_fig33} shows the numerical error and the theoretical discrete error bound given in Theorem \ref{thm6} by fixing $T=1, \Delta t=0.0001$ and the PinT parameter $\alpha=0.001$. In Figure \ref{linch_fig4} we see the dependency on $\epsilon$ and $N_t$ with fixed parameters $\beta=0.2, \alpha=0.001, h=1/10, T=1$. As in the case of 1D, we observe similar convergence behaviour irrespective of problem parameters as well as mesh parameters in 2D. We skip the numerical experiments for the general fourth order PDE given in \eqref{general_4thorder} as we observe the similar convergence behaviour like the two already showed linear models. Next we show the convergence behaviour of the PinT algorithm applied to the nonlinear CH equation.
\begin{figure}[h!]
    \centering
    \subfloat{{\includegraphics[height=4cm,width=3.5cm]{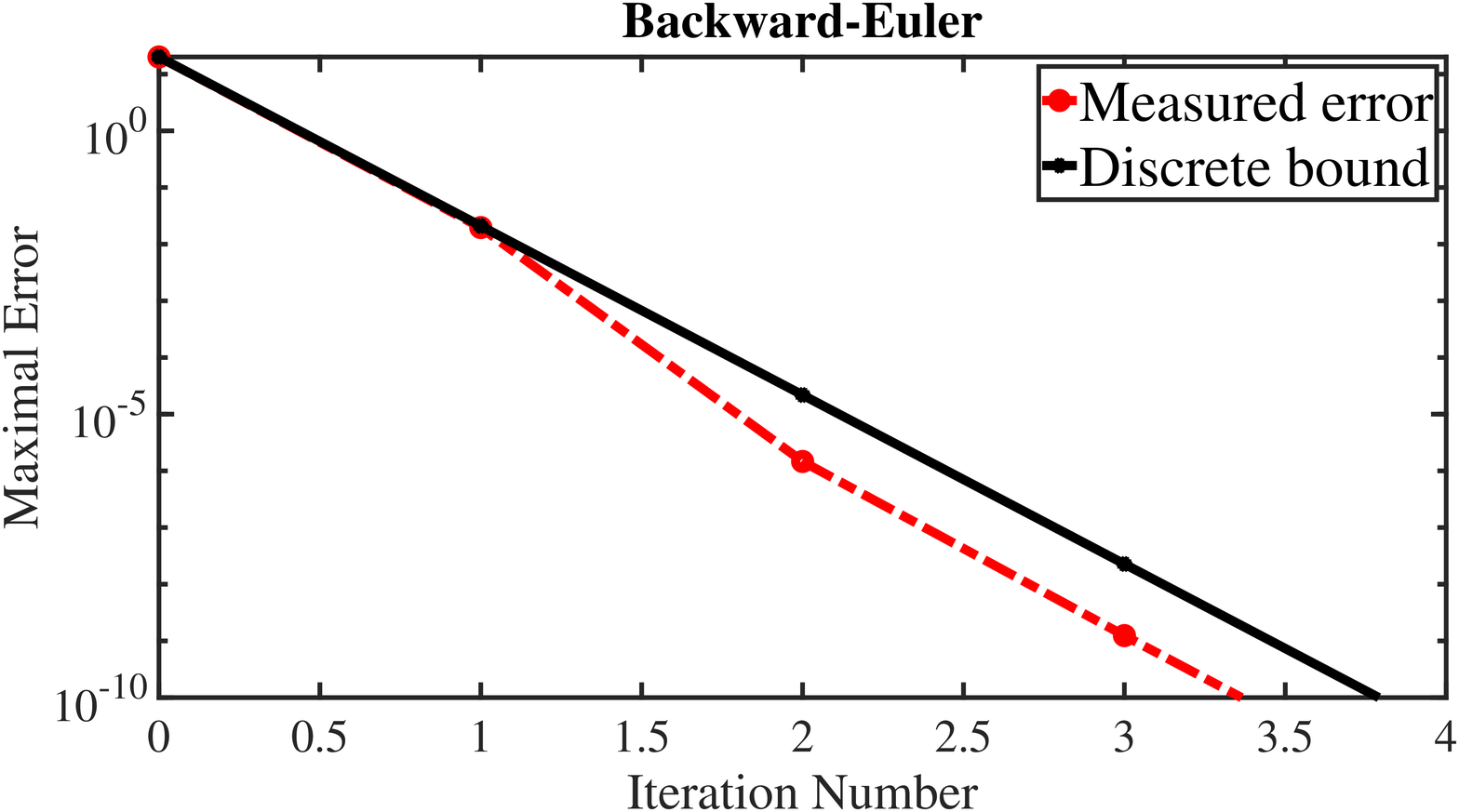} }}
     \subfloat{{\includegraphics[height=4cm,width=3.5cm]{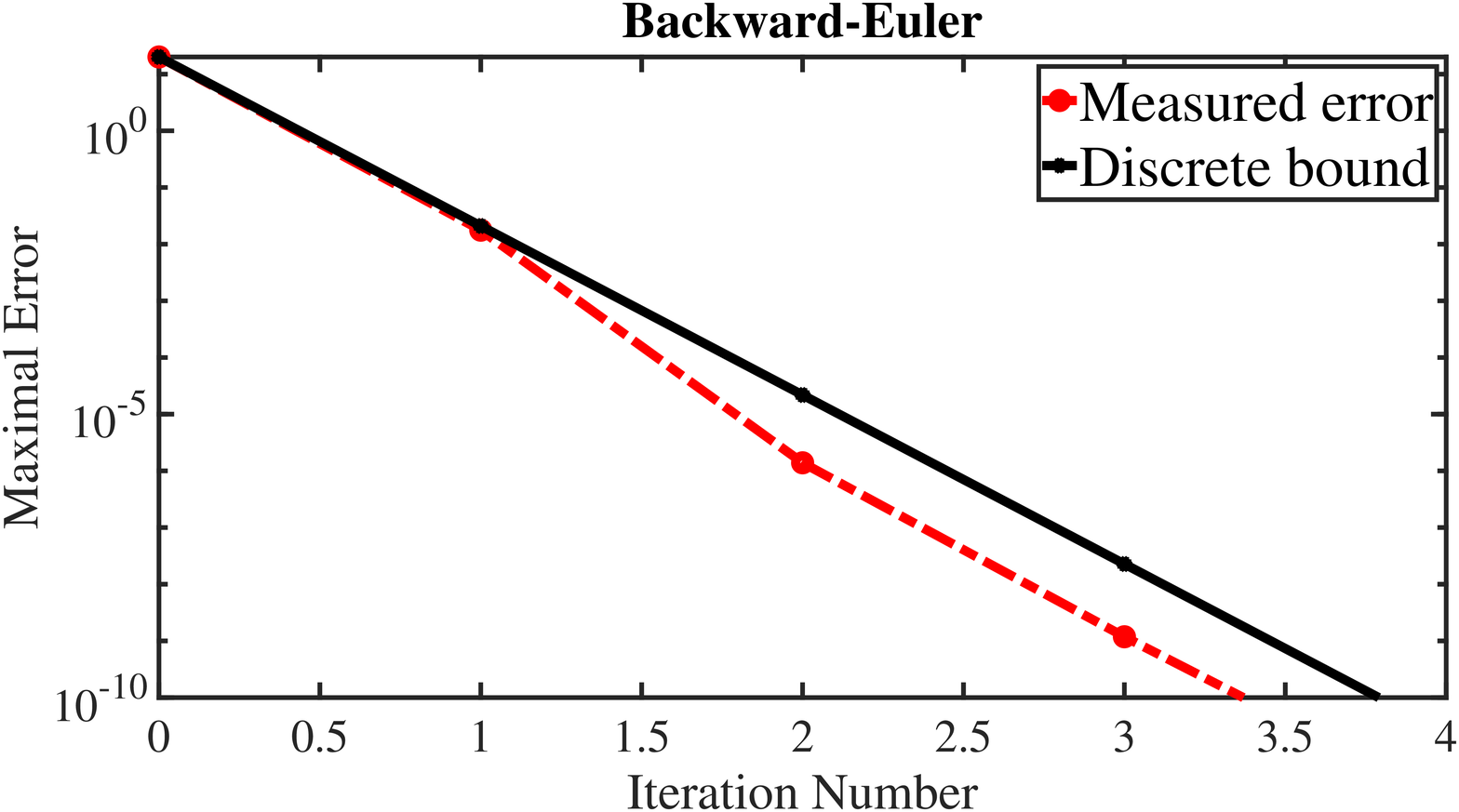} }}
     \subfloat{{\includegraphics[height=4cm,width=3.5cm]{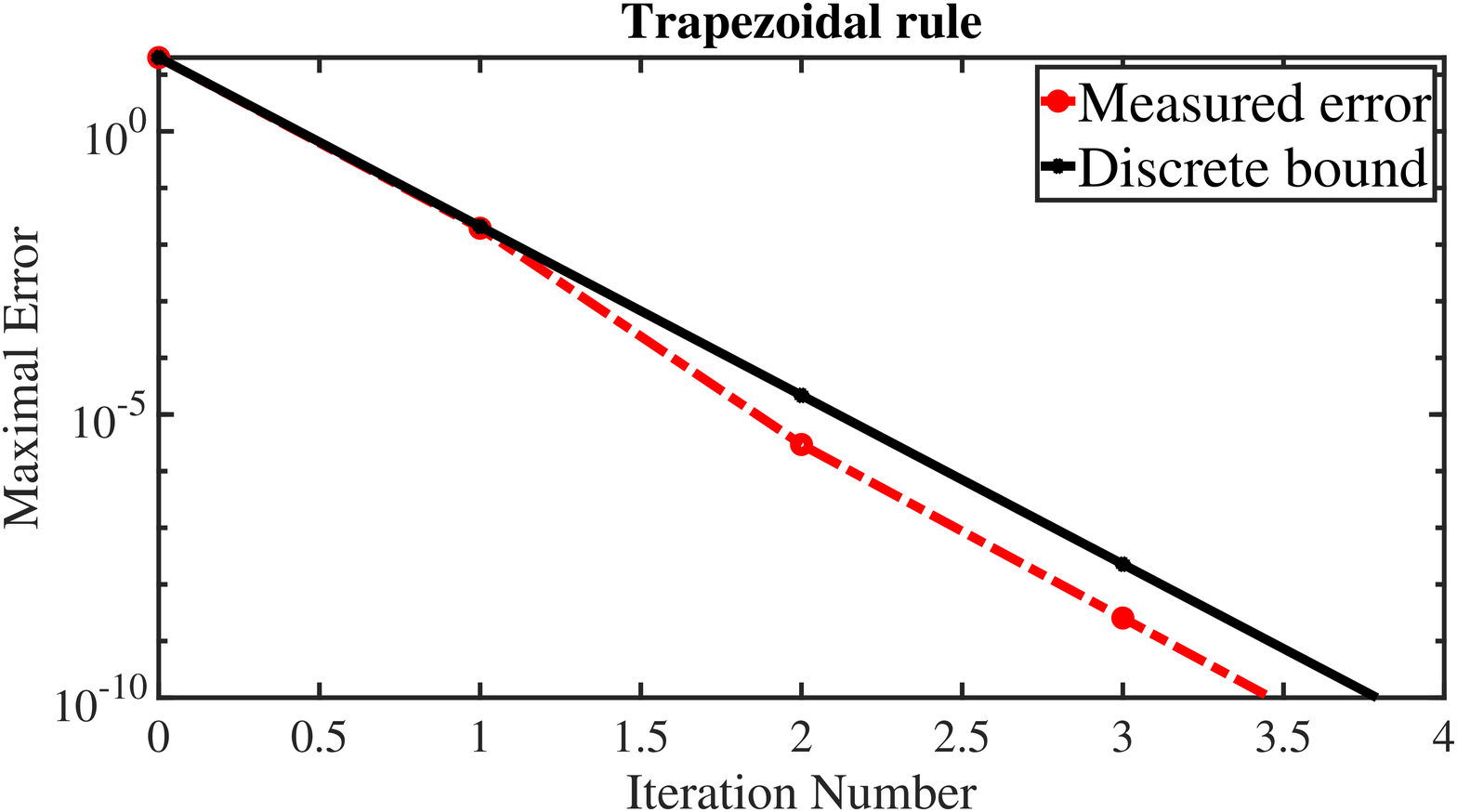} }}
     \subfloat{{\includegraphics[height=4cm,width=3.5cm]{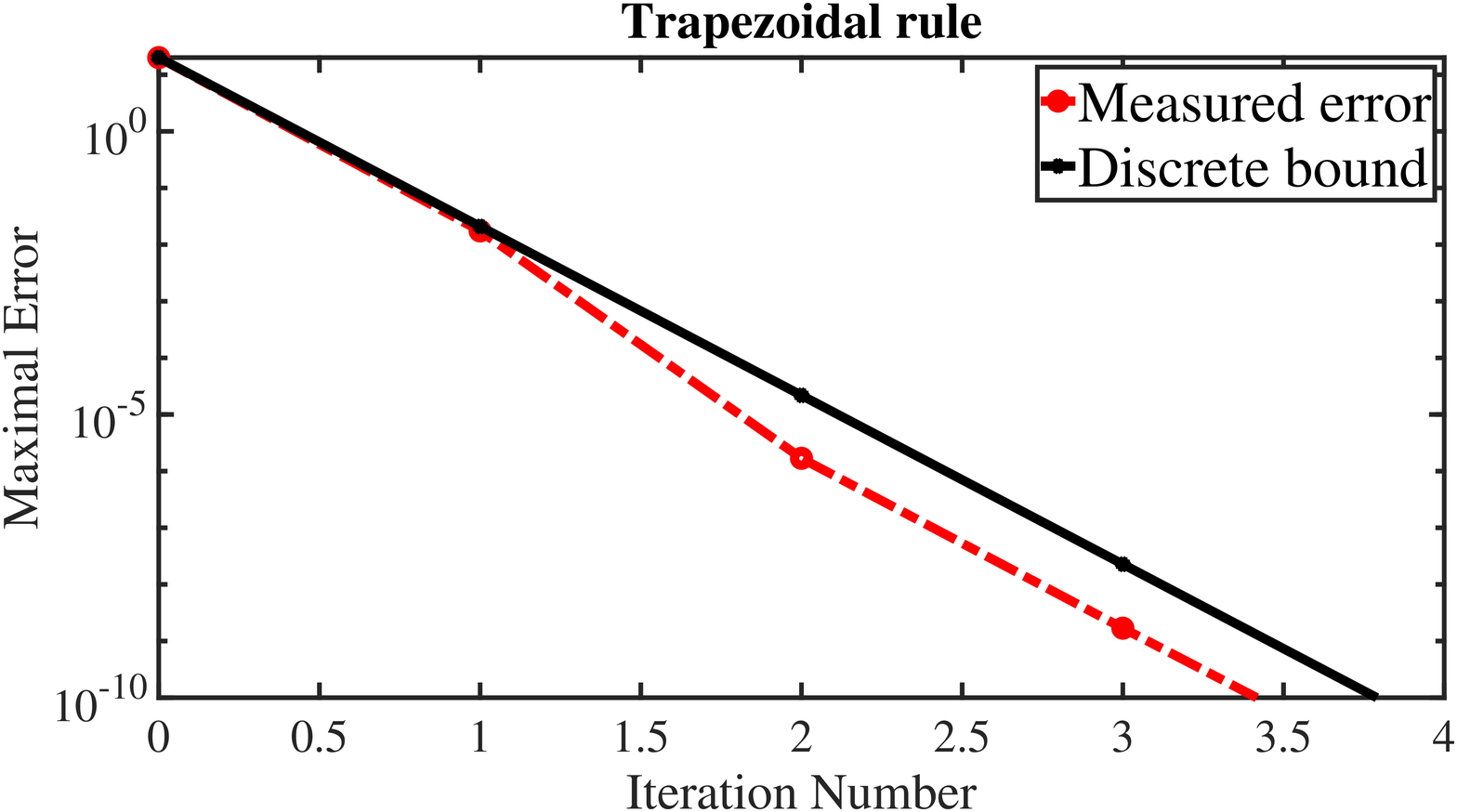} }}
    \caption{Comparison of theoretical estimate and numerical error with different mesh size $h$. First (with $h=1/10$) and second (with $h=1/20$) figure: Backward-Euler; Third (with $h=1/10$) and fourth (with $h=1/20$) figure: Trapezoidal rule.}
    \label{linch_fig33}
\end{figure}
\begin{figure}[h!]
    \centering
    \subfloat{{\includegraphics[height=4cm,width=3.5cm]{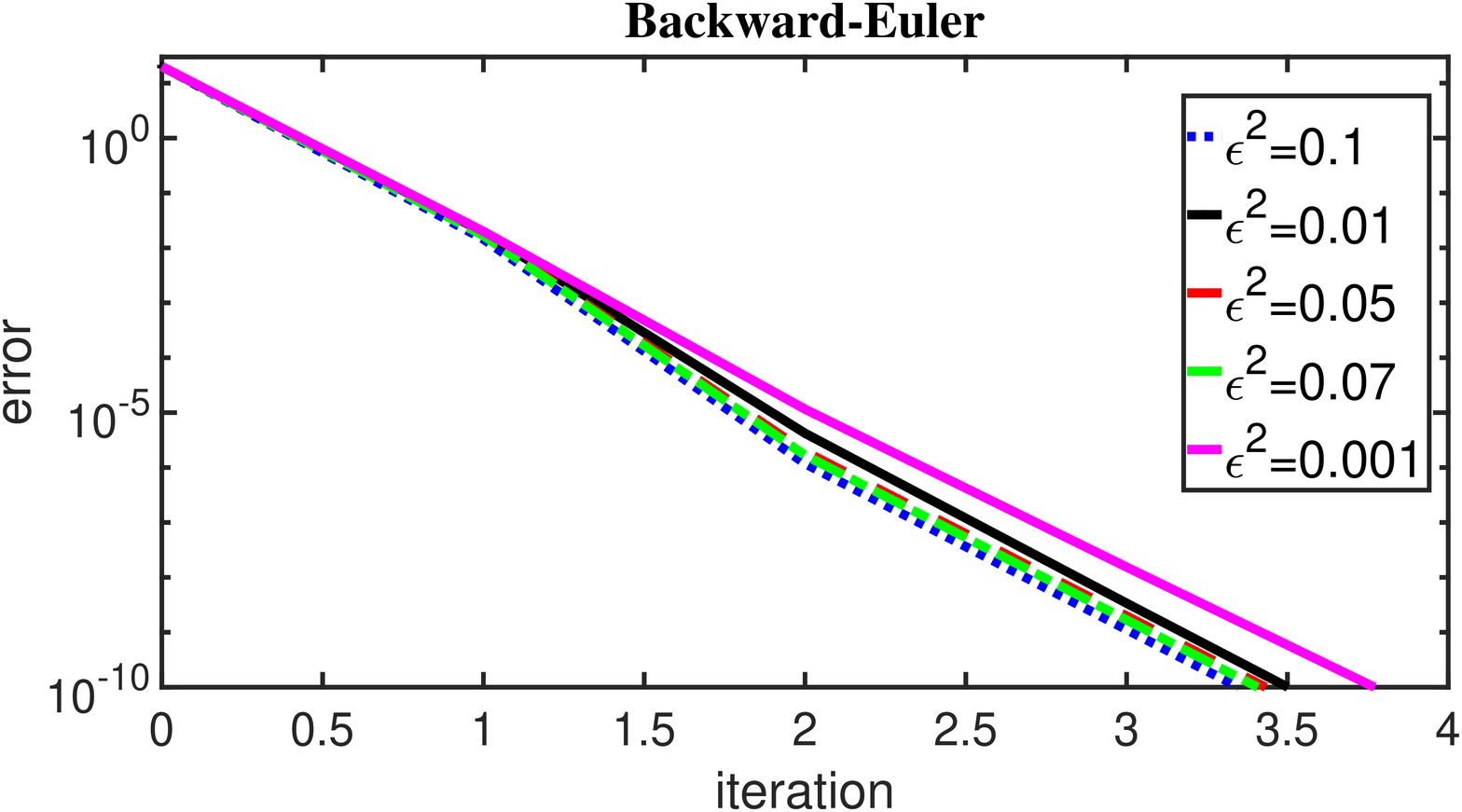} }}
     \subfloat{{\includegraphics[height=4cm,width=3.5cm]{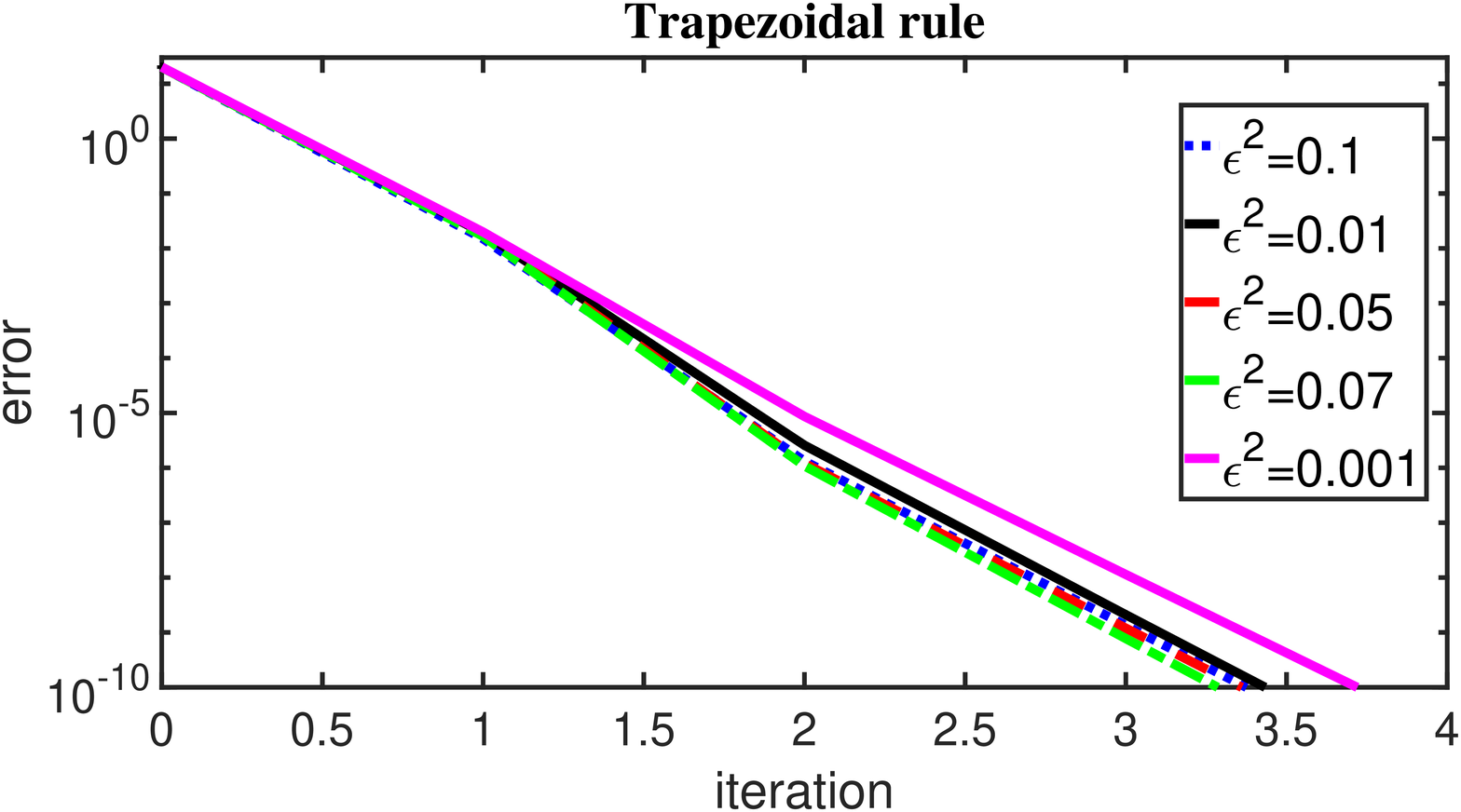} }}
     \subfloat{{\includegraphics[height=4cm,width=3.5cm]{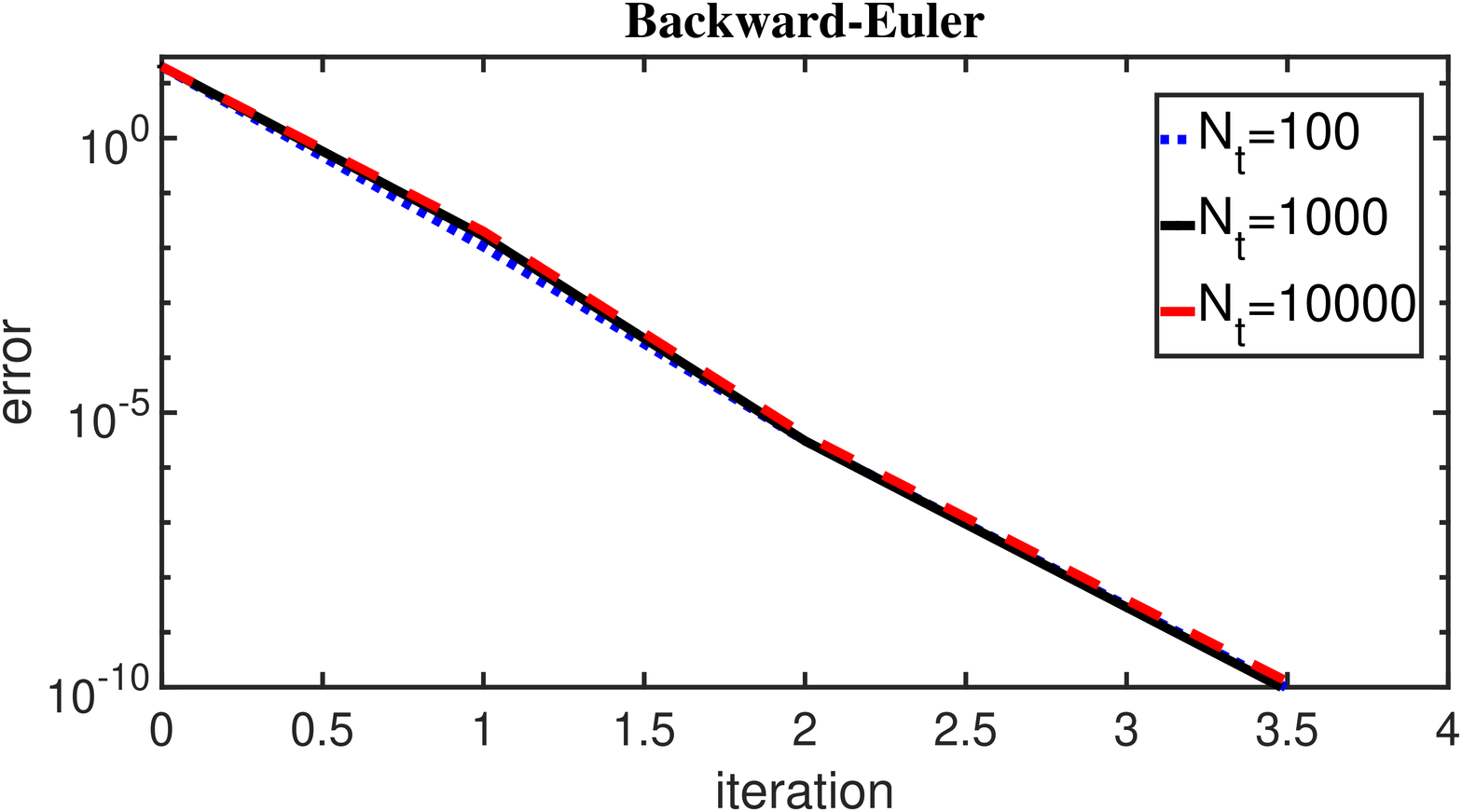} }}
     \subfloat{{\includegraphics[height=4cm,width=3.5cm]{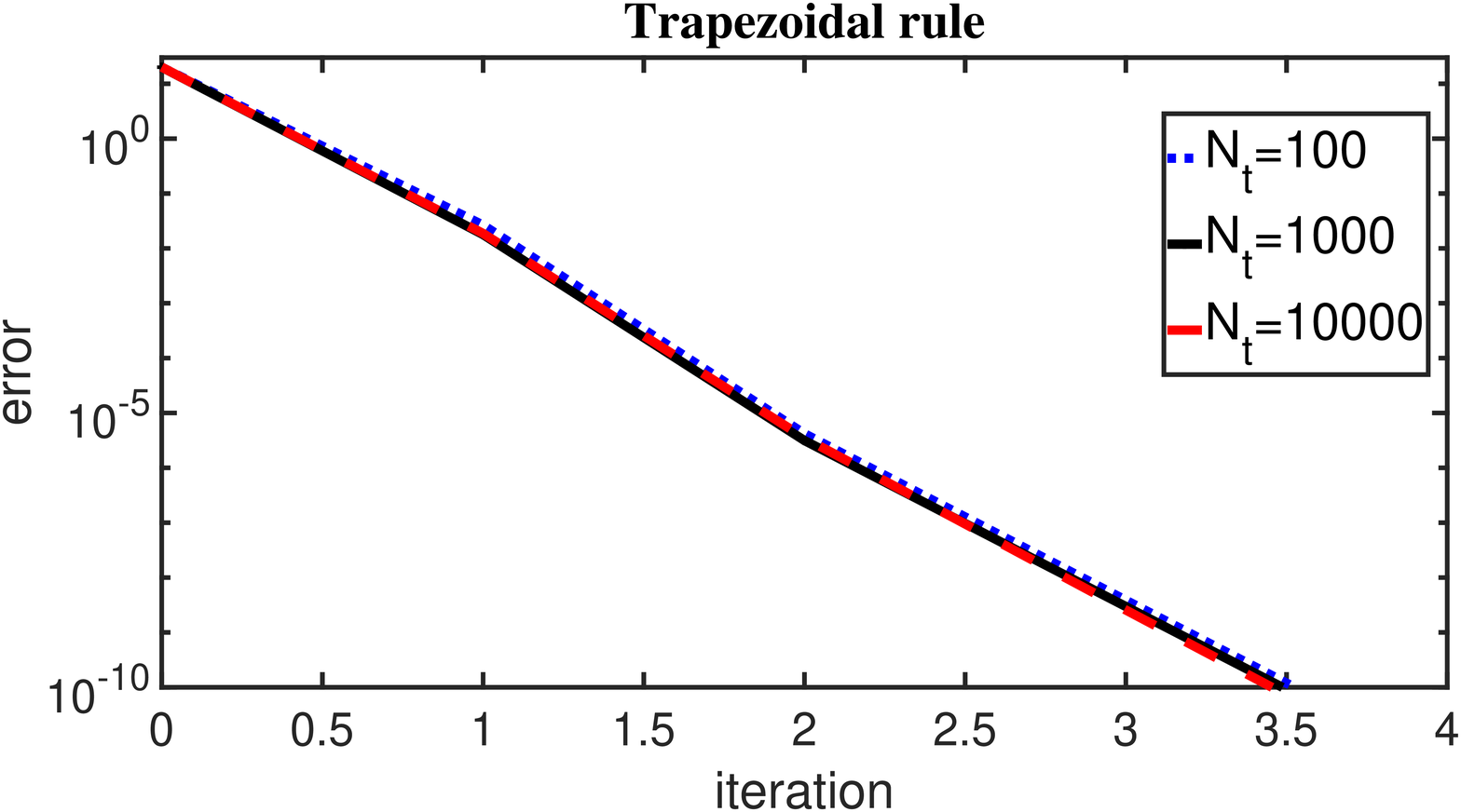} }}
    \caption{Convergence for various $\epsilon, N_t$ and different time integrators. First  and second figure: $\epsilon$ dependency with $\Delta t=10^{-4}$; Third and fourth figure: $N_t$ dependency with $\epsilon^2 = 0.01$.}
    \label{linch_fig4}
\end{figure}
\subsection{The nonlinear CH equation}
First we present the numerical results in 1D by considering the domain $\Omega=(0, 1)$. We plot the convergence curve in Figure \ref{nonlinch_fig1} of PinT-I and PinT-II to see the dependency on PinT parameter $\alpha$, and one can see that very small $\alpha$ does not provide good convergence. In Figure \ref{nonlinch_fig2}
we observe the mesh independence and problem parameter $\epsilon$ independence of the proposed methods.
\begin{figure}[h!]
    \centering
    \subfloat{{\includegraphics[height=5cm,width=6.5cm]{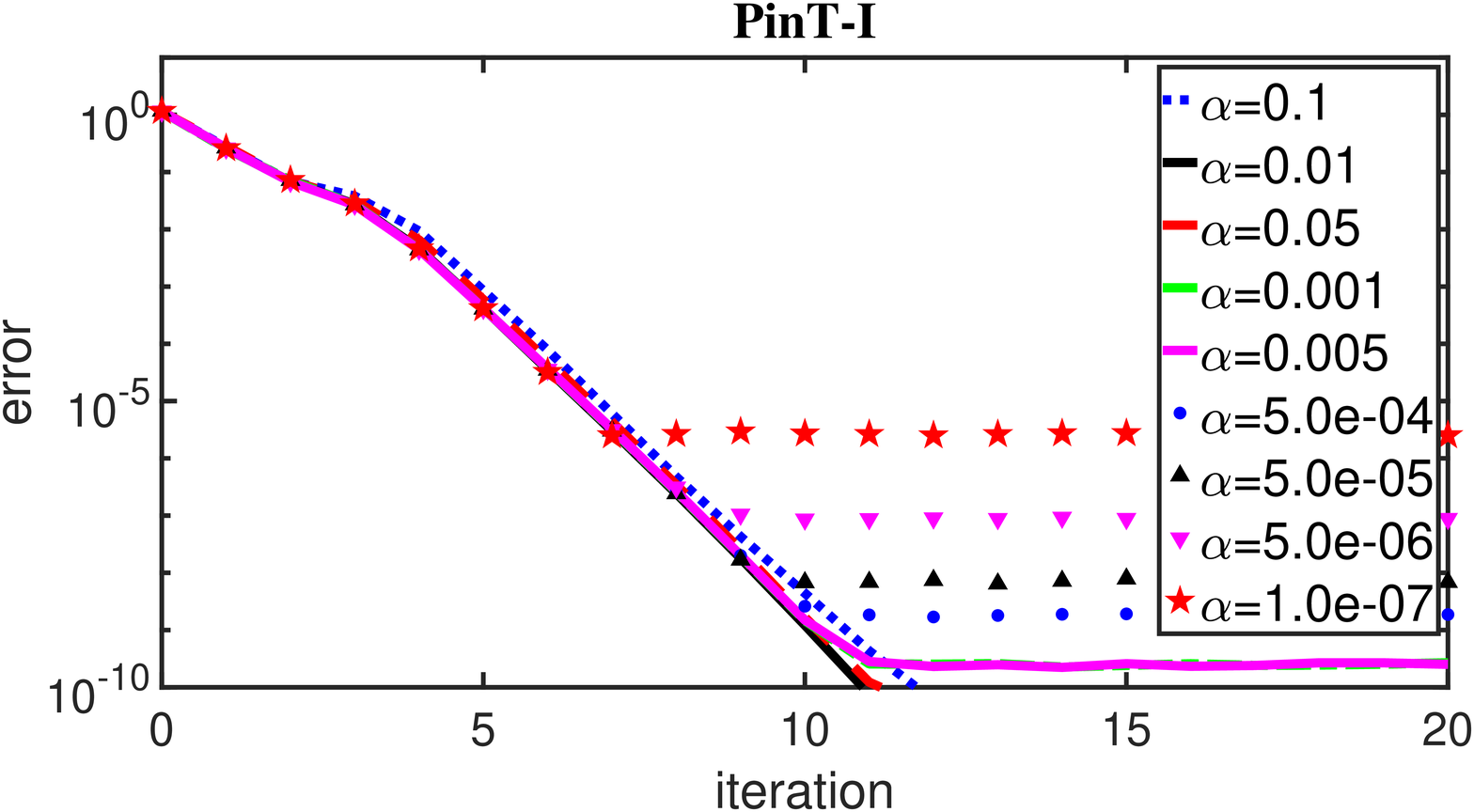} }}
     \subfloat{{\includegraphics[height=5cm,width=6.5cm]{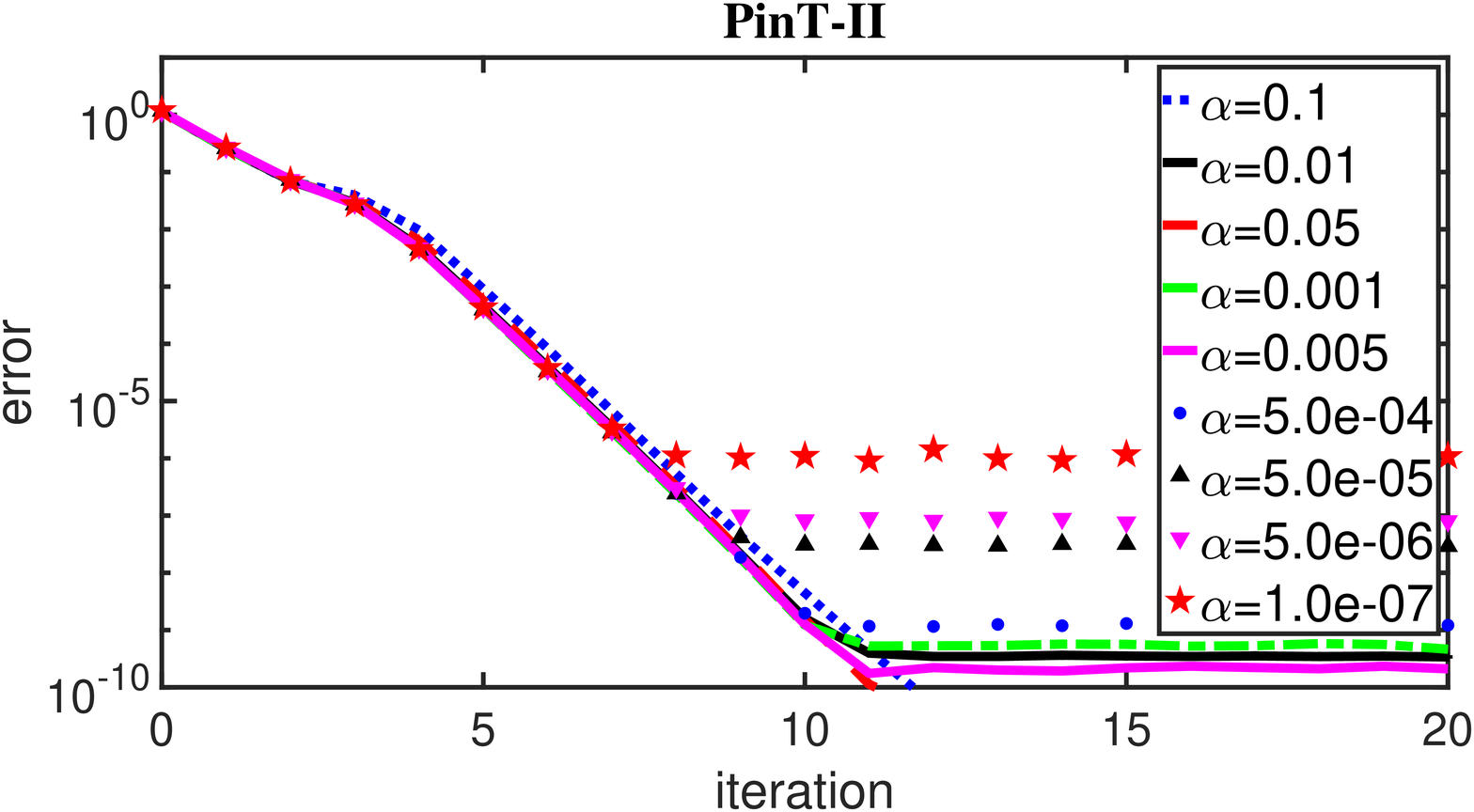} }}
    \caption{Convergence for different $\alpha$ with fixed $h=1/128, \Delta t=10^{-4}, T=0.1, \epsilon^2=0.05$. On the left: PinT-I; On the right: PinT-II.}
    \label{nonlinch_fig1}
\end{figure}
\begin{figure}[h!]
    \centering
    \subfloat{{\includegraphics[height=4cm,width=3.5cm]{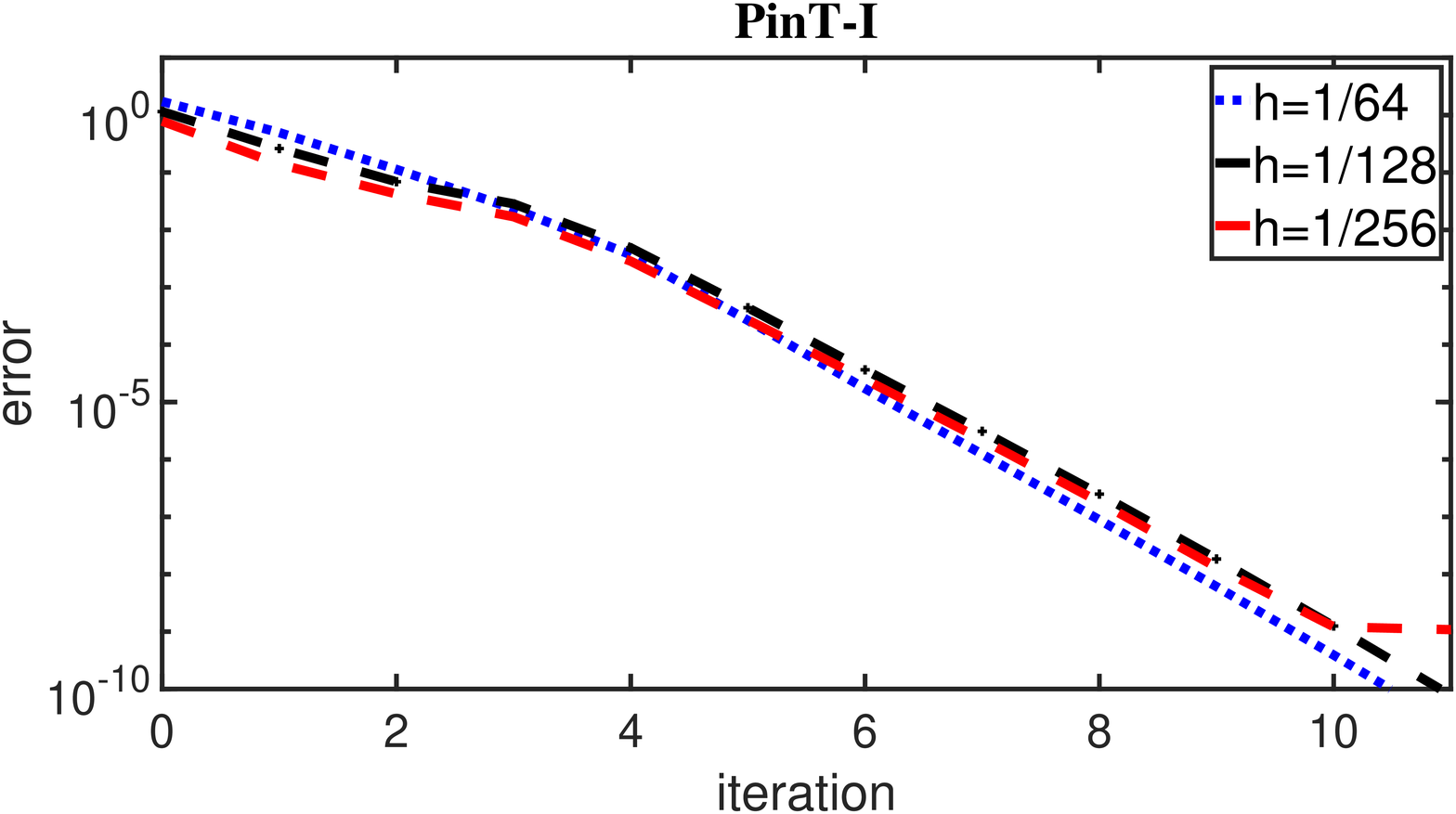} }}
     \subfloat{{\includegraphics[height=4cm,width=3.5cm]{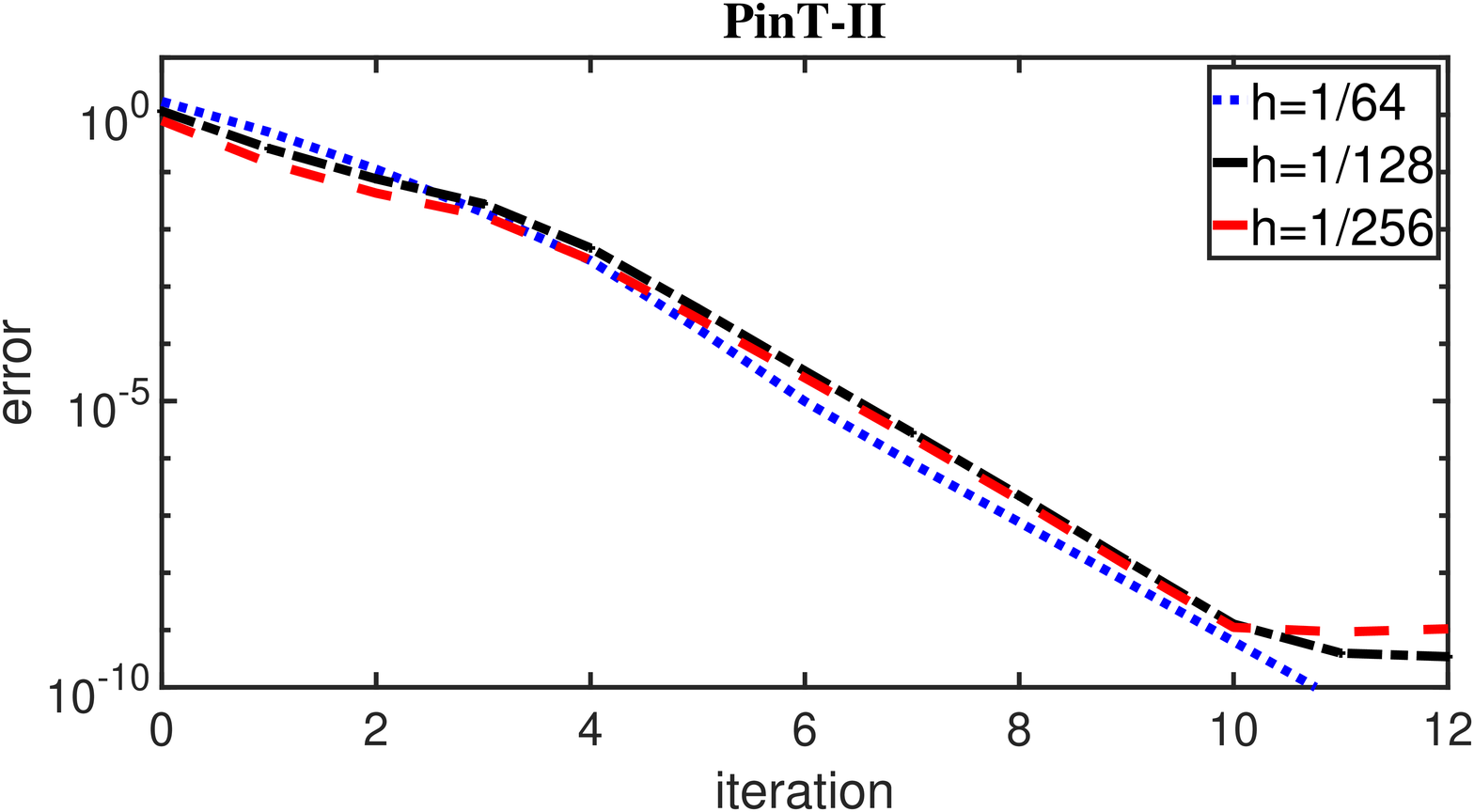} }}
     \subfloat{{\includegraphics[height=4cm,width=3.5cm]{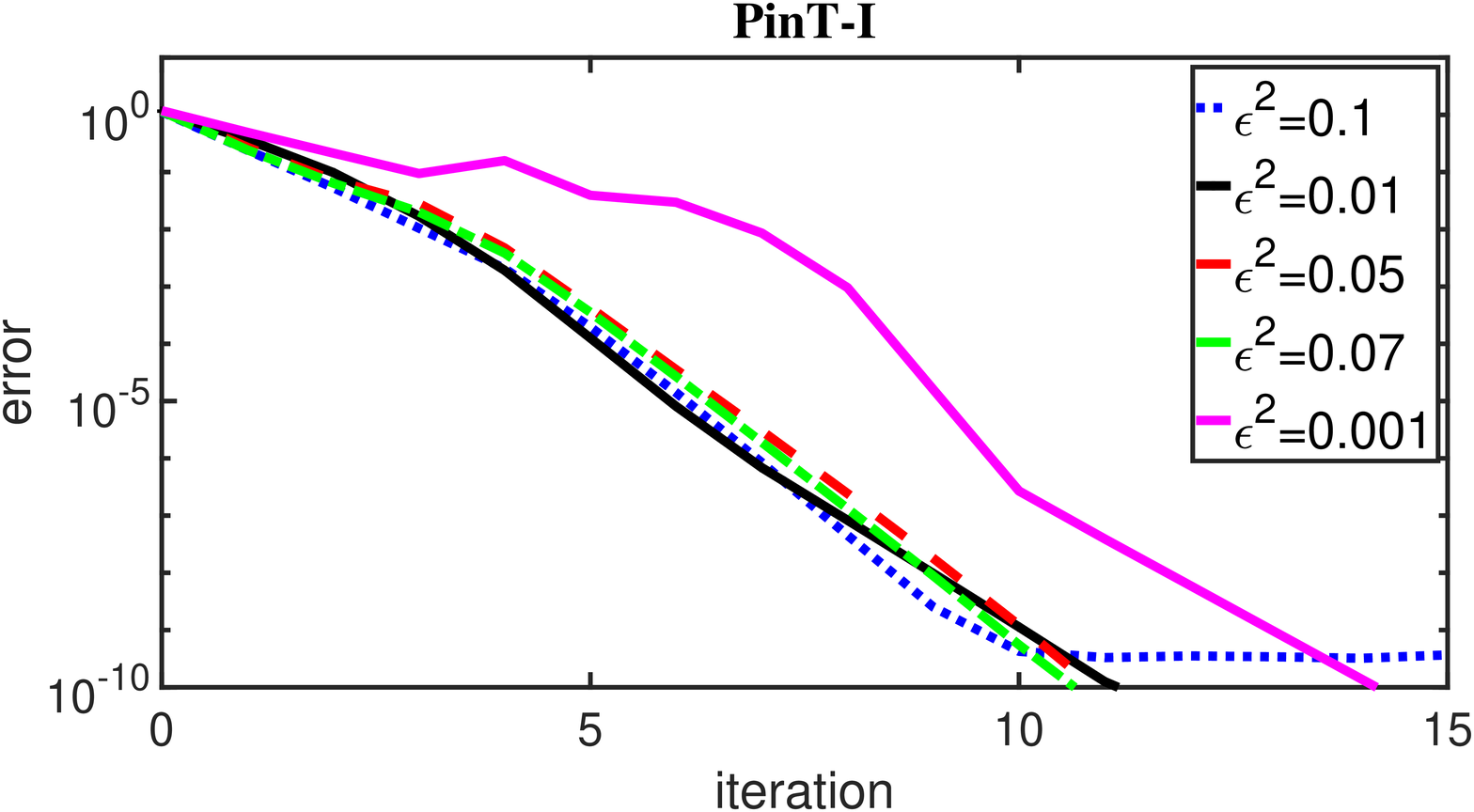} }}
     \subfloat{{\includegraphics[height=4cm,width=3.5cm]{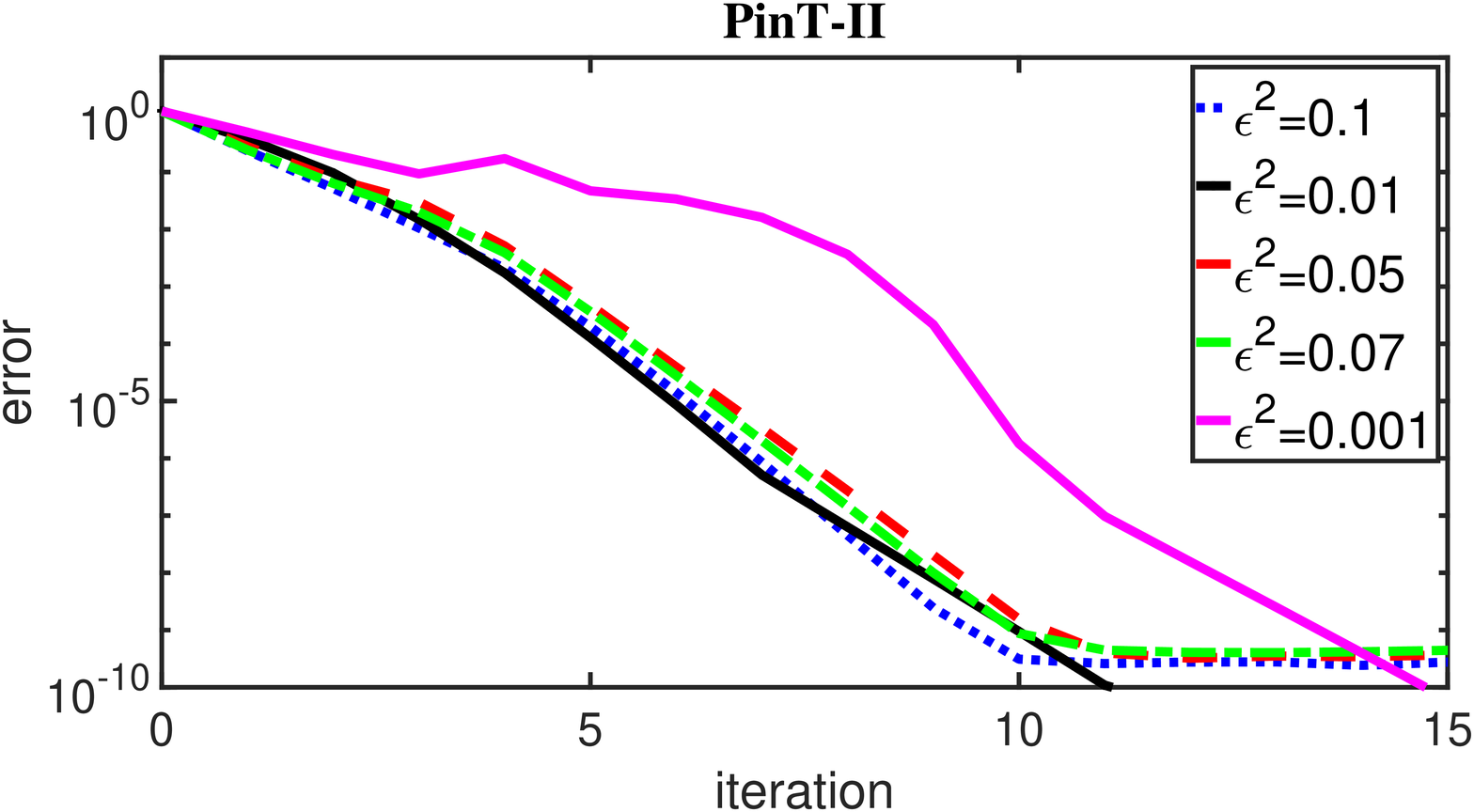} }}
    \caption{Convergence for various $h, \epsilon$ with different integrator. First and second figure: $h$ dependency with $\Delta t=10^{-4}, T=0.1, \epsilon^2=0.05,\alpha=0.01$; Third and fourth figure: $\epsilon$ dependency with $\Delta t=10^{-4}, T=0.1, h=1/128,\alpha=0.01$.}
    \label{nonlinch_fig2}
\end{figure}
\begin{figure}[h!]
    \centering
    \subfloat{{\includegraphics[height=4cm,width=3.5cm]{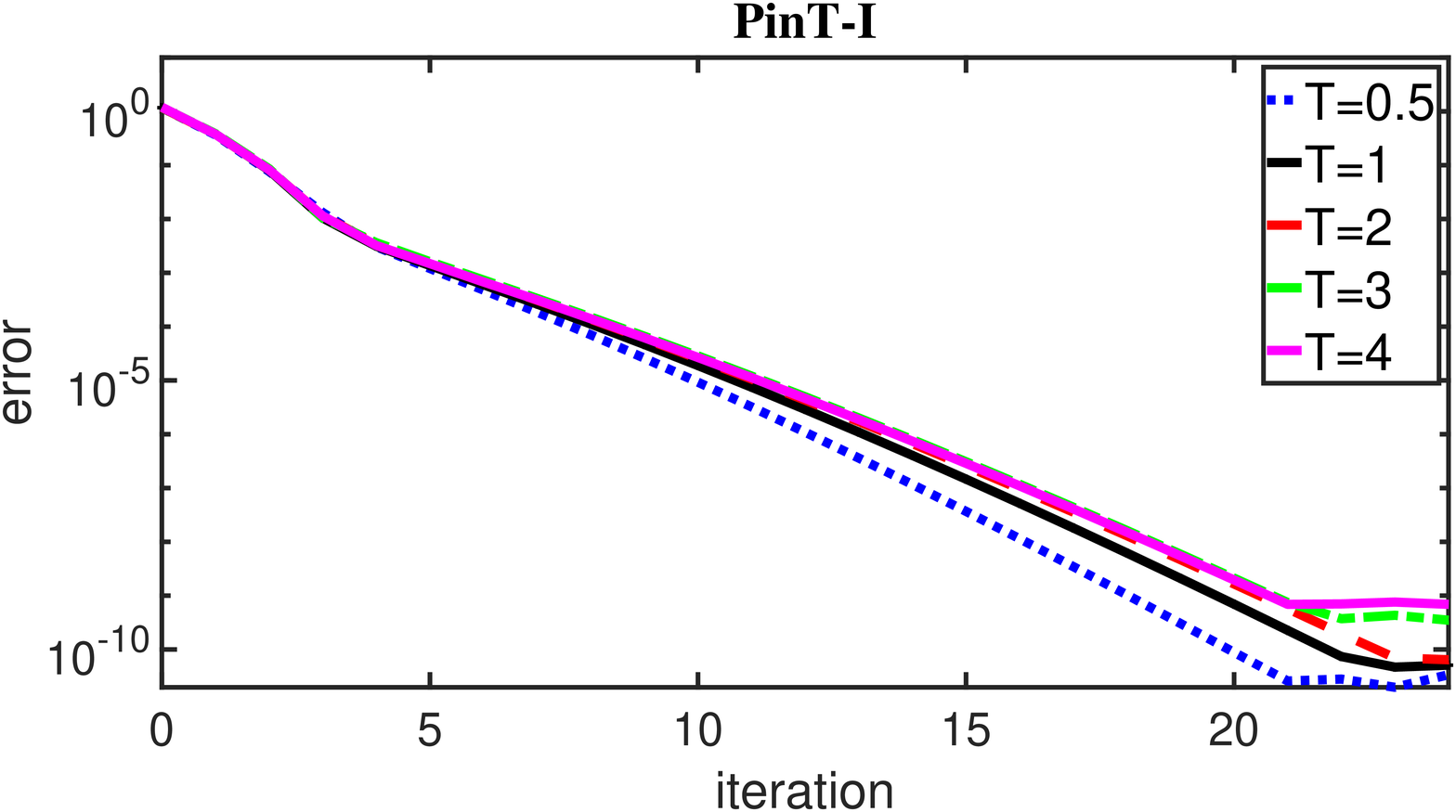} }}
     \subfloat{{\includegraphics[height=4cm,width=3.5cm]{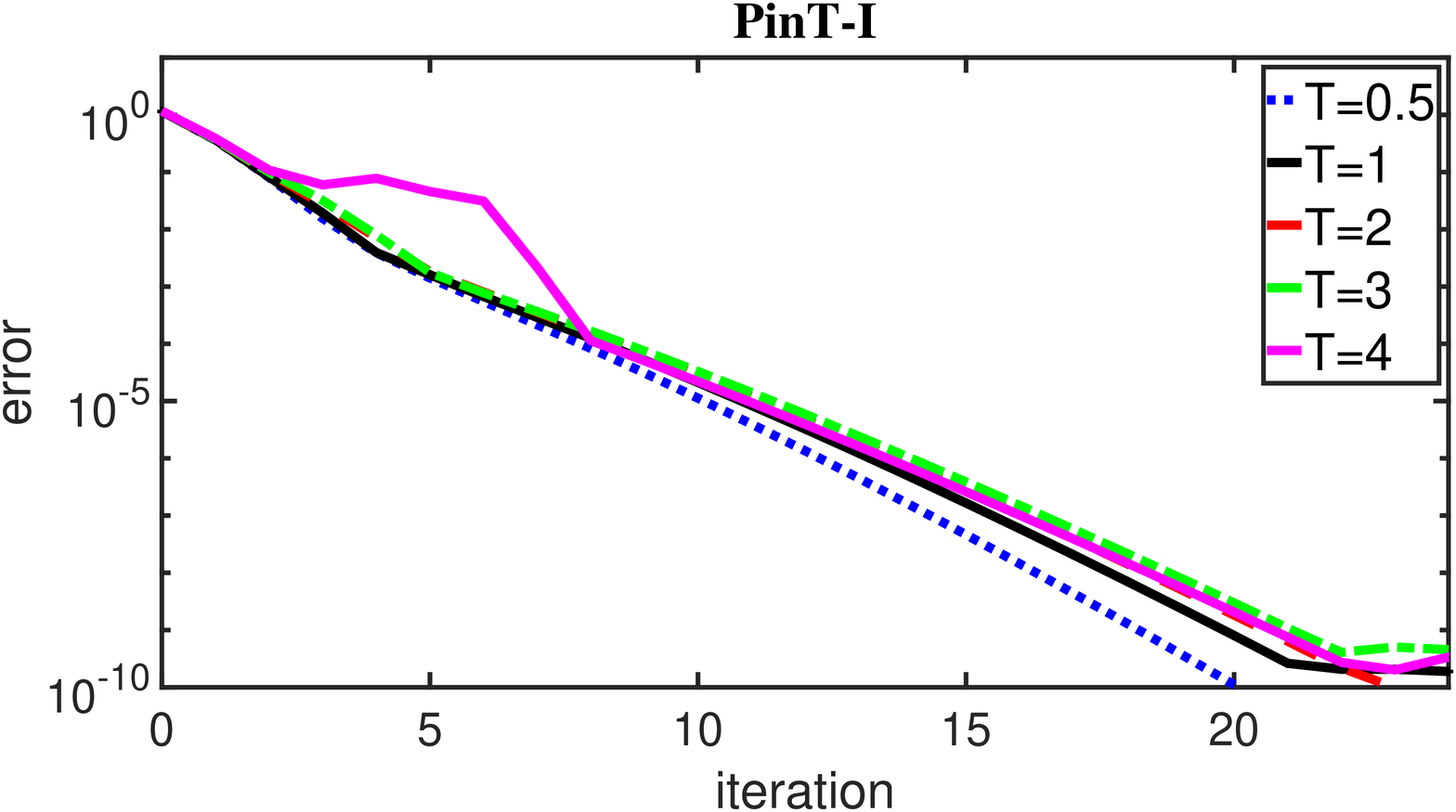} }}
     \subfloat{{\includegraphics[height=4cm,width=3.5cm]{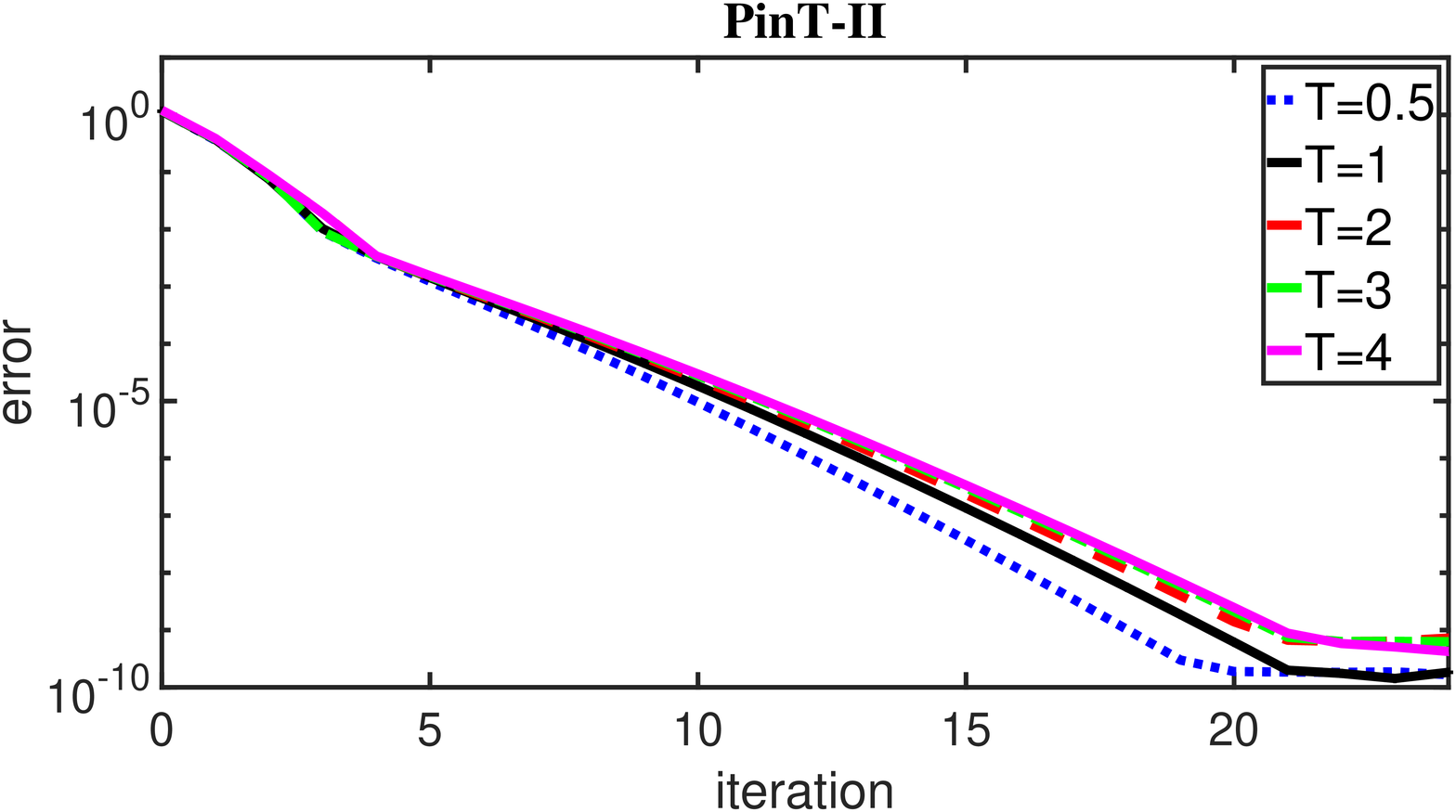} }}
     \subfloat{{\includegraphics[height=4cm,width=3.5cm]{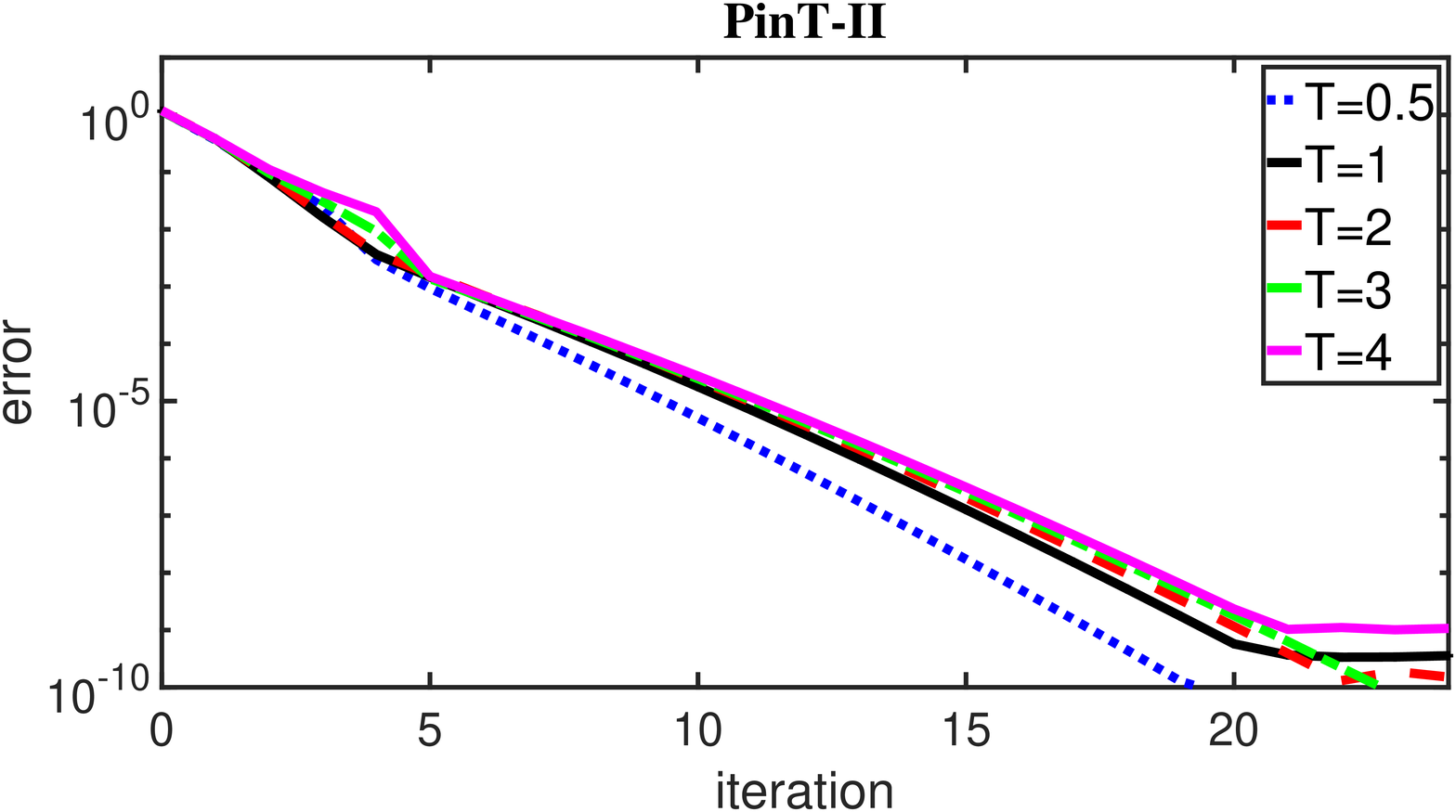} }}
    \caption{Convergence for different $T$ and different integrator with fixed $h=1/128, \epsilon^2=0.01$. First  and third figure: $\Delta t=10^{-4}, \alpha=0.05$; Second and fourth figure: $\Delta t=10^{-3}, \alpha=0.15$.}
    \label{nonlinch_fig3}
    \end{figure}

Both methods exhibit similar kind of convergence behaviour in terms of iteration count, and can be seen in Figure \ref{nonlinch_fig3}. Even though we get robust convergence for large $T$, the privileges of getting solution in a very long time window is not straight forward, because over a long time the Jacobian approximation may not be quite accurate. For example with $T=4, \Delta t=10^{-4}$ we have $N_t=40000$; so that one computes those many time steps in parallel per iteration and taking the Jacobian approximation by averaging in each iteration. As $N_t$ grows the approximation to Jacobian becomes poorer. One idea of getting solution over long time is to use windowing technique.
\begin{figure}[h!]
    \centering
    \subfloat{{\includegraphics[height=6cm,width=6.5cm]{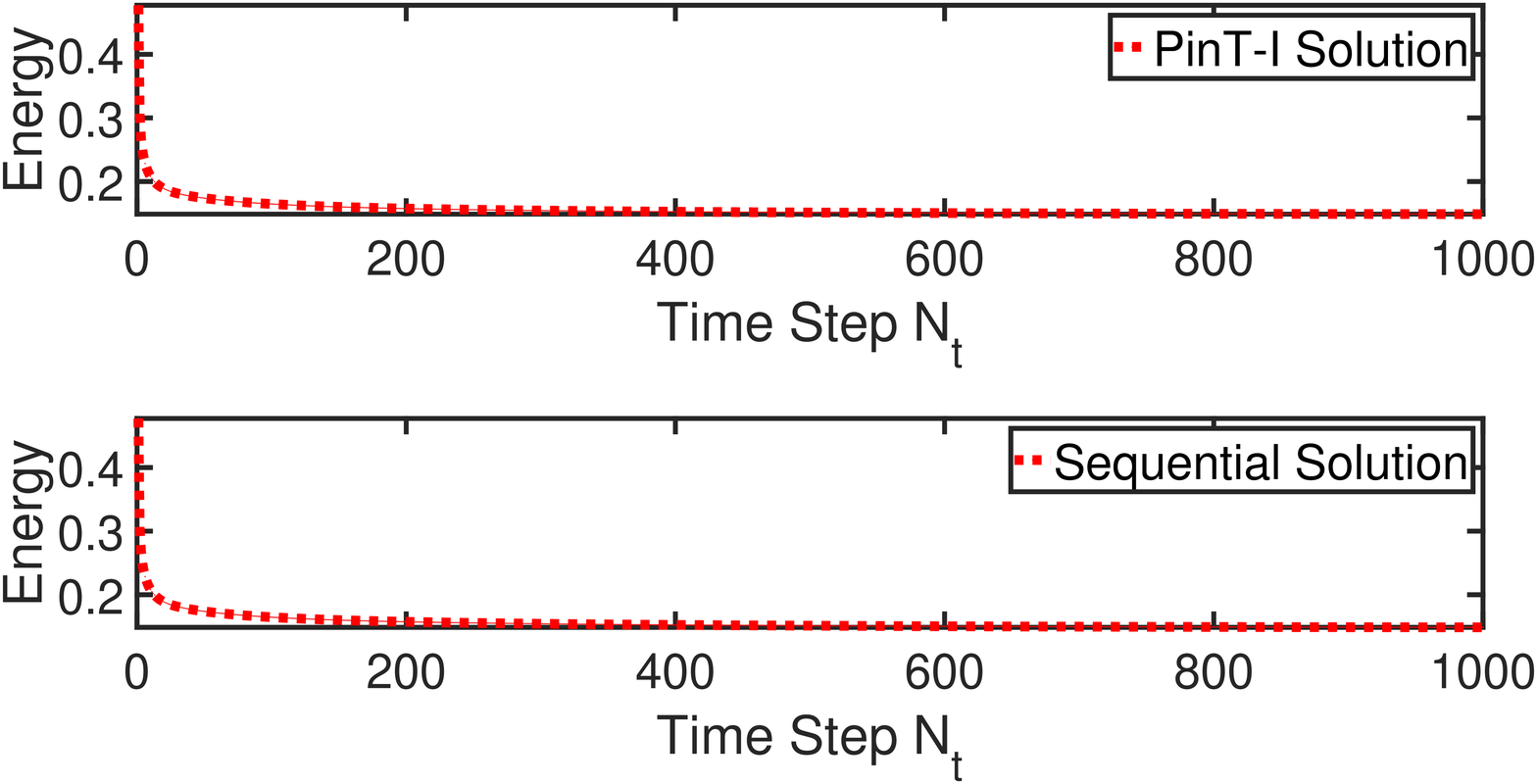} }}
     \subfloat{{\includegraphics[height=6cm,width=6.5cm]{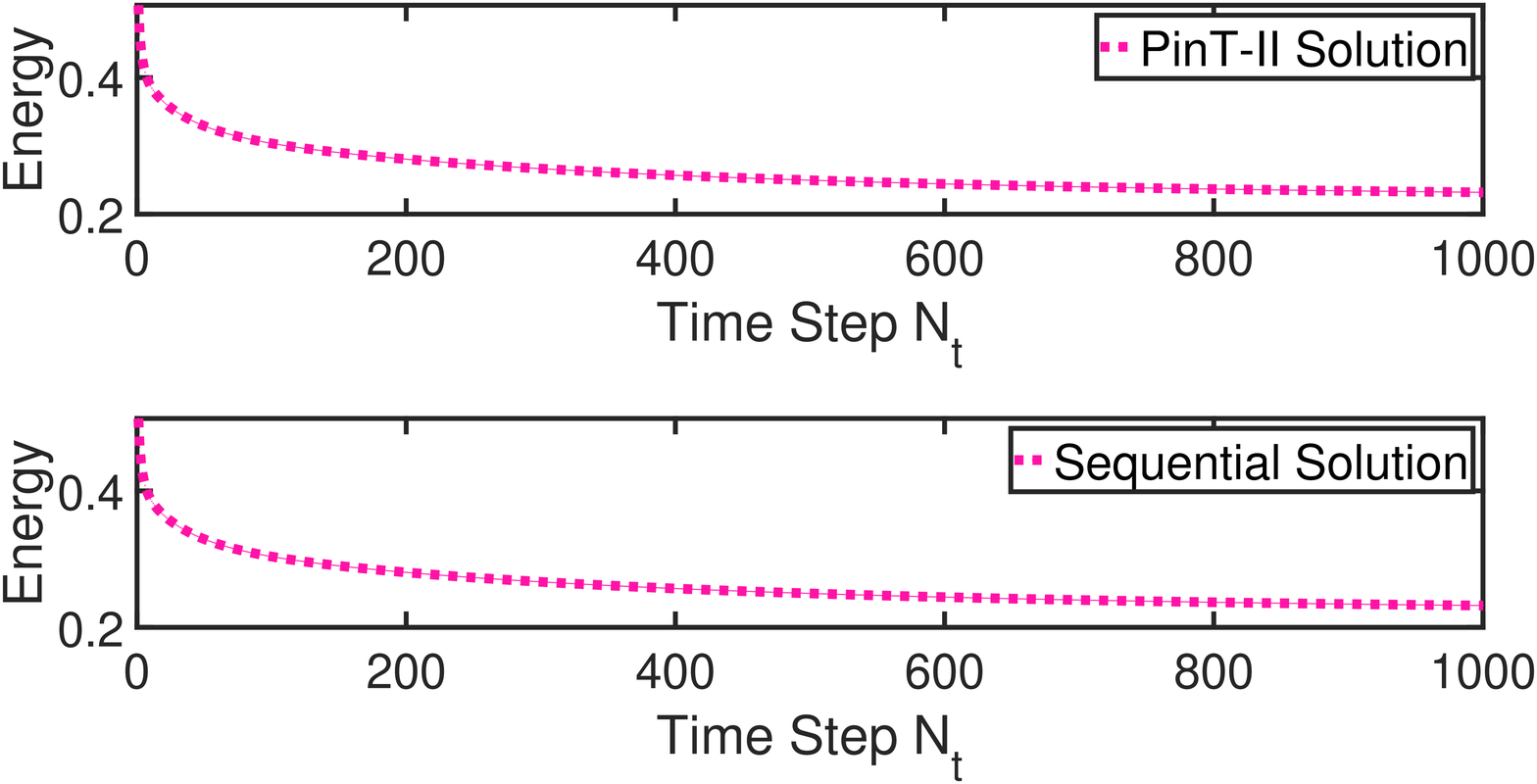} }}
    \caption{Discrete energy of solutions with fixed $h=1/128, \Delta t=10^{-4}, \alpha=0.05$. On the left: PinT-I with $\epsilon^2=0.025$; On the right: PinT-II with $\epsilon^2=0.075$.}
    \label{nonlinch_fig4}
\end{figure}
In Figure \ref{nonlinch_fig4} we show the non-increasing property of the discrete energy of both integrators with different choice of $\epsilon$.

To perform numerical experiments in 2D we consider the domain $\Omega=(0, 1)\times (0, 1)$. We observe the similar convergence behaviour of both PinT-I and PinT-II as in the case of 1D. Figure \ref{nonlinch_fig5} shows the convergence of PinT-II for long time window. The method shows very robust convergence behaviour in terms of iteration count over long time window with respect to different $\epsilon$. 
\begin{figure}[h!]
    \centering
    \subfloat{{\includegraphics[height=5cm,width=6.5cm]{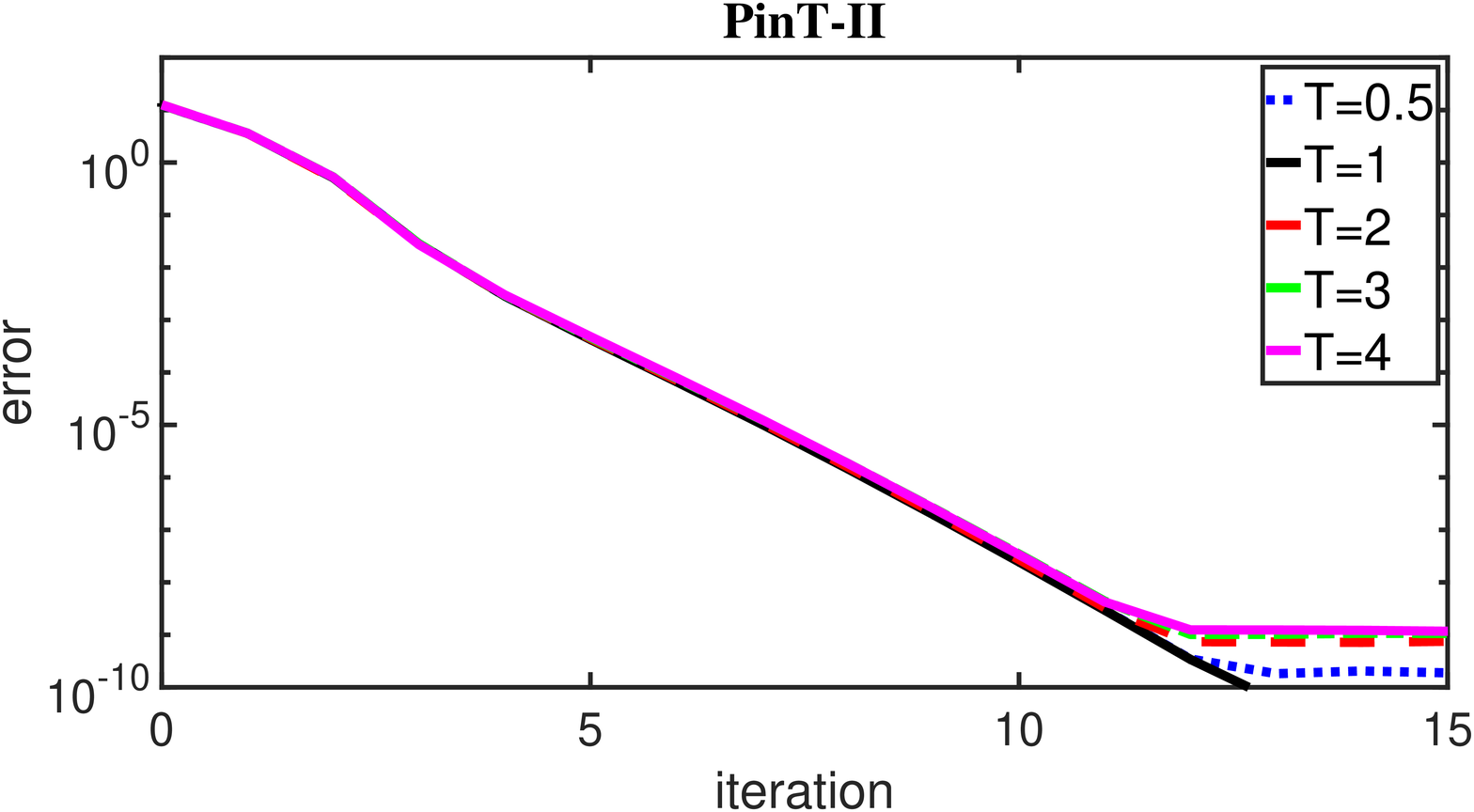} }}
     \subfloat{{\includegraphics[height=5cm,width=6.5cm]{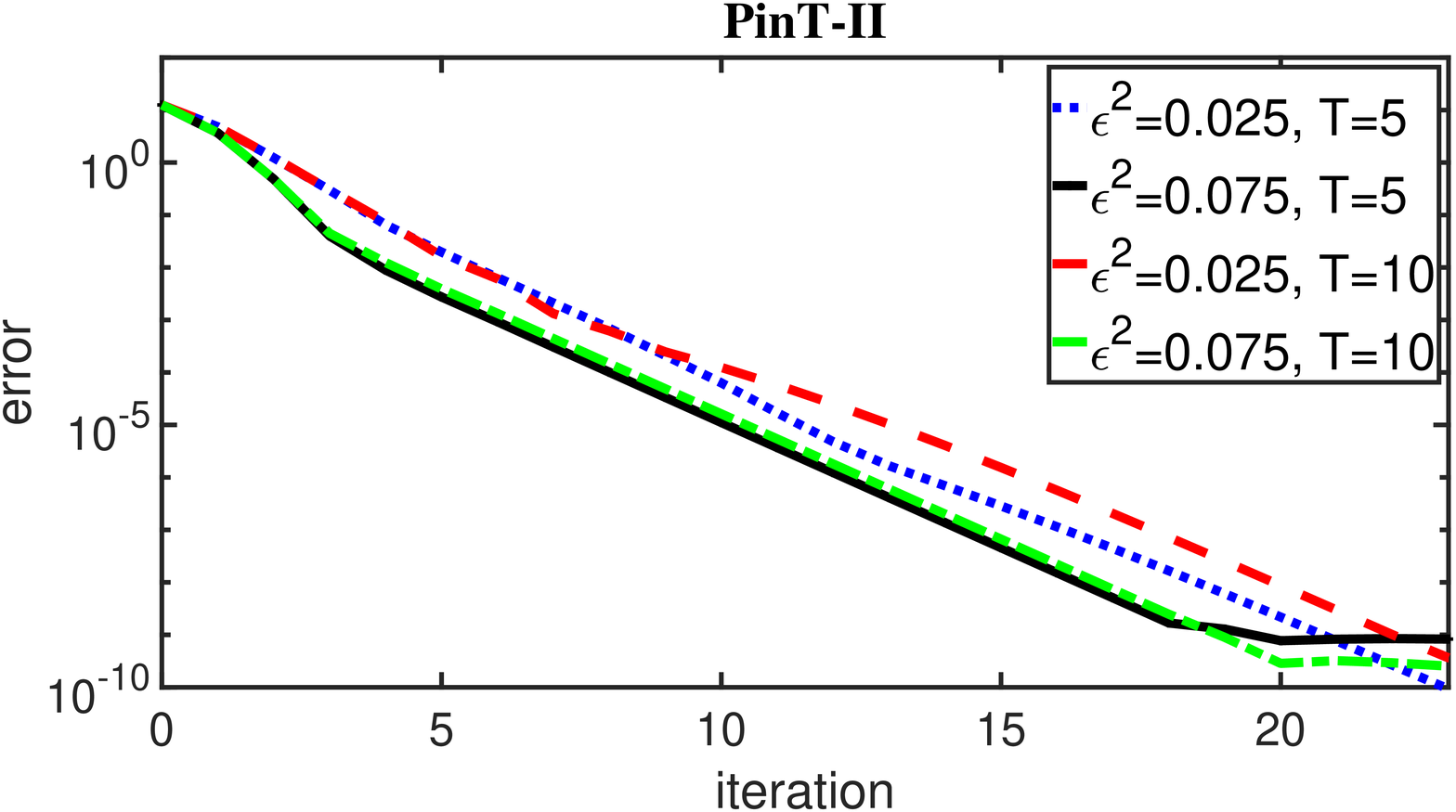} }}
    \caption{On the left: PinT-II convergence for different $T$ with fixed $h=1/32, \Delta t=10^{-4}, \alpha=0.05, \epsilon^2=0.075$. On the right: PinT-II convergence for different $T$ amd $ \epsilon$ with fixed $h=1/32, \Delta t=10^{-3}, \alpha=0.25$.}
    \label{nonlinch_fig5}
\end{figure}
\subsection{Verification of Solutions \& Physical Phenomenon}
In this subsection we present numerical solution of the nonlinear CH equation. First we present numerical solution of PinT-I in 1D. We take the computational domain $\Omega=(0, 1)$ with mesh size $h=1/128$. The initial solution of the CH equation is taken as $u_0(x, 0)=0.75\sin(2\pi x)+0.25\cos(4\pi x)$. The solution of the CH equation using PinT-I obtained over the time interval $(0, T=0.1)$ with fixed time step $\Delta t=10^{-4}, \epsilon=0.1$, and parameter $\alpha=0.005$. We start with random initial guess for PinT-I and then obtained follow up iterations, which can be seen from Figure \ref{pint1_earlystage}. After 2nd iteration still there are smaller frequency mode in the solution, and which are gone in 3rd iteration. We essentially get the solution at 4th iteration, and can be seen from middle plot of Figure \ref{pint1_earlystage2}. On the last plot of Figure \ref{pint1_earlystage2} we can see that energy is decaying over the time, which implies that the method PinT-I preserved the energy minimization property. 
\begin{figure}[h]
    \centering
    \subfloat{{\includegraphics[height=5.5cm,width=4.5cm]{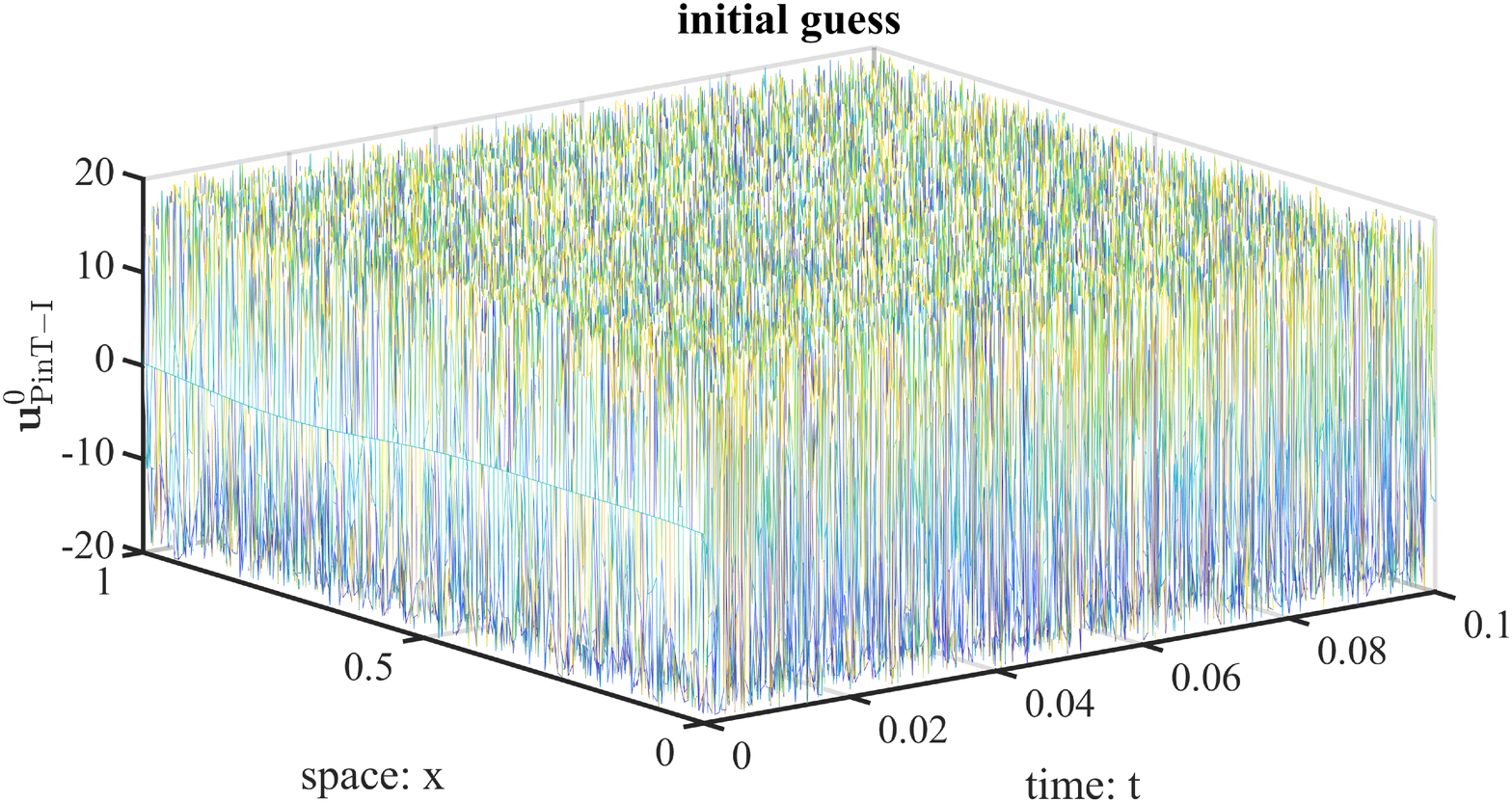} }}
     \subfloat{{\includegraphics[height=5.5cm,width=4.5cm]{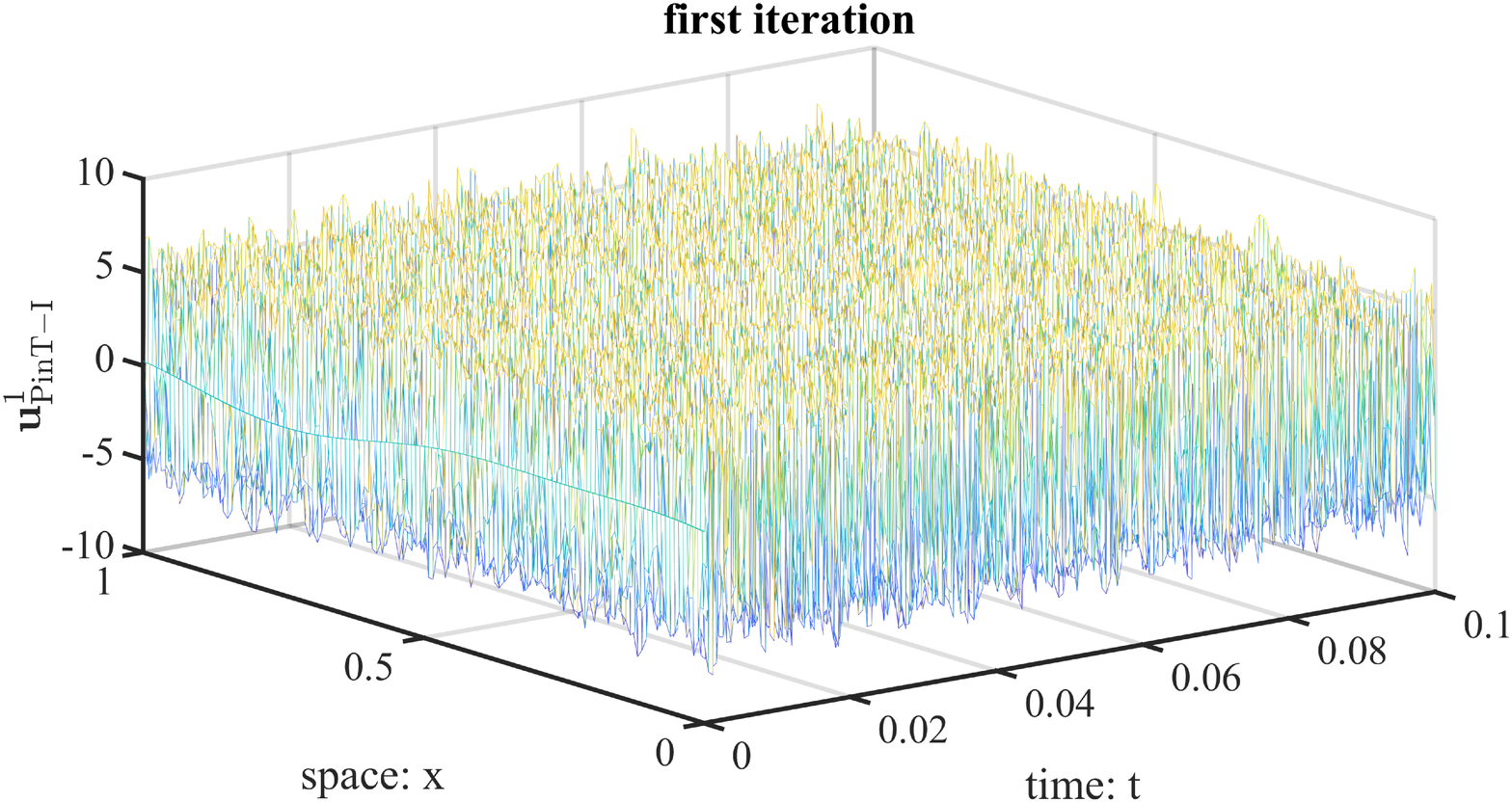} }}
     \subfloat{{\includegraphics[height=5.5cm,width=4.5cm]{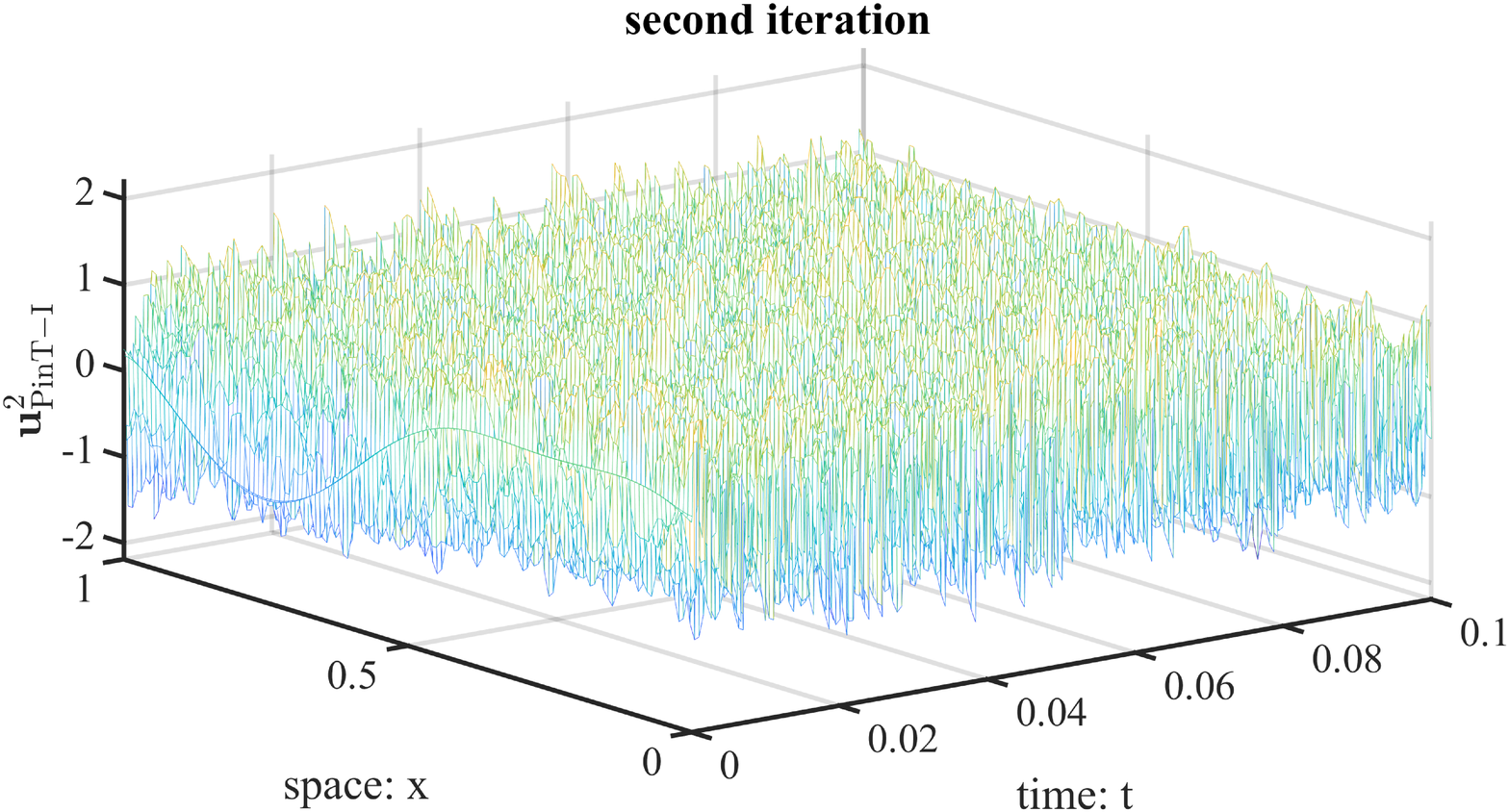} }}
    \caption{From left to right: solution of PinT-I, from initial guess to 2nd iteration}
    \label{pint1_earlystage}
\end{figure}
\begin{figure}[h]
    \centering
    \subfloat{{\includegraphics[height=5.5cm,width=4.5cm]{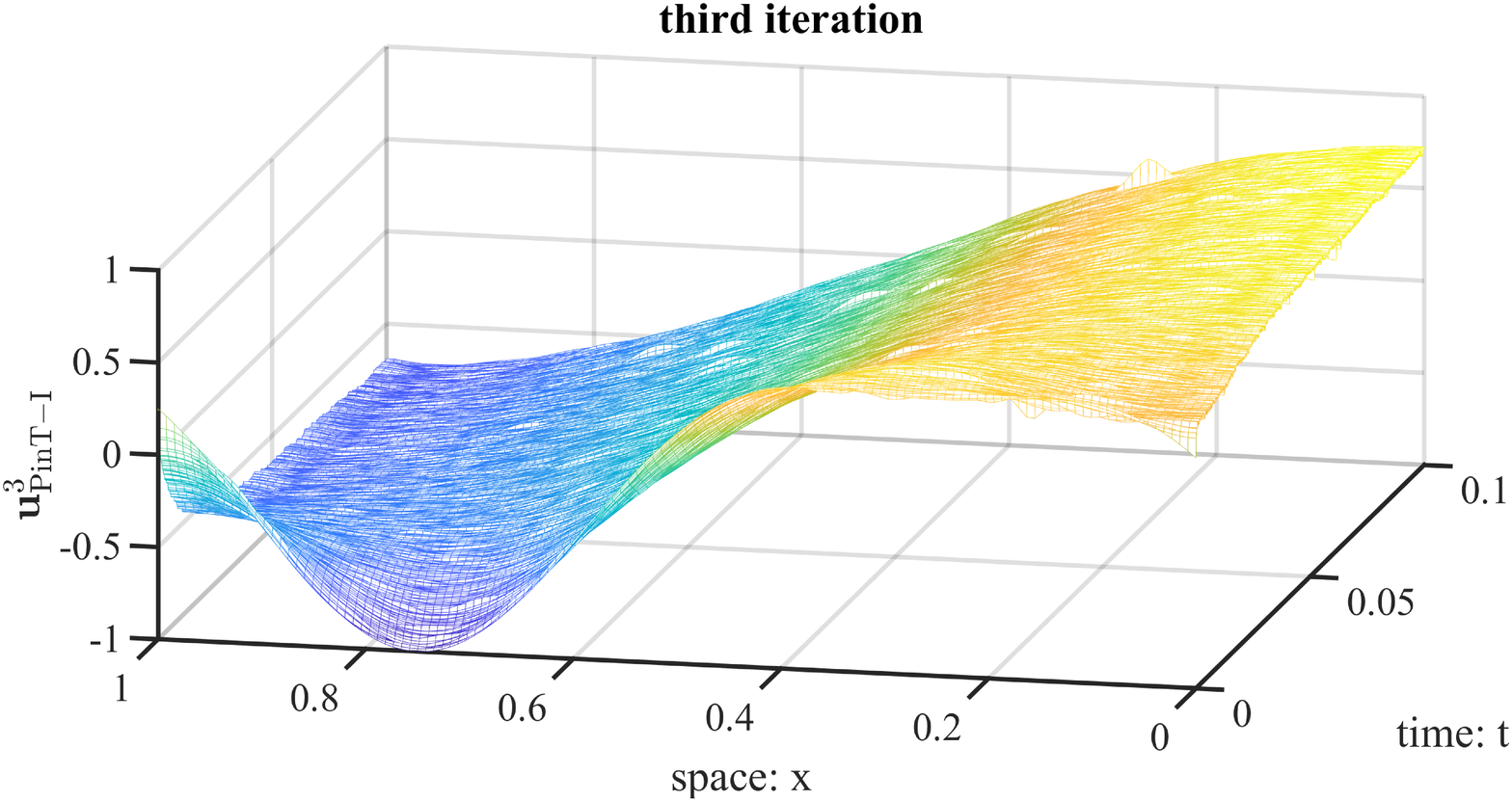} }}
     \subfloat{{\includegraphics[height=5.5cm,width=4.5cm]{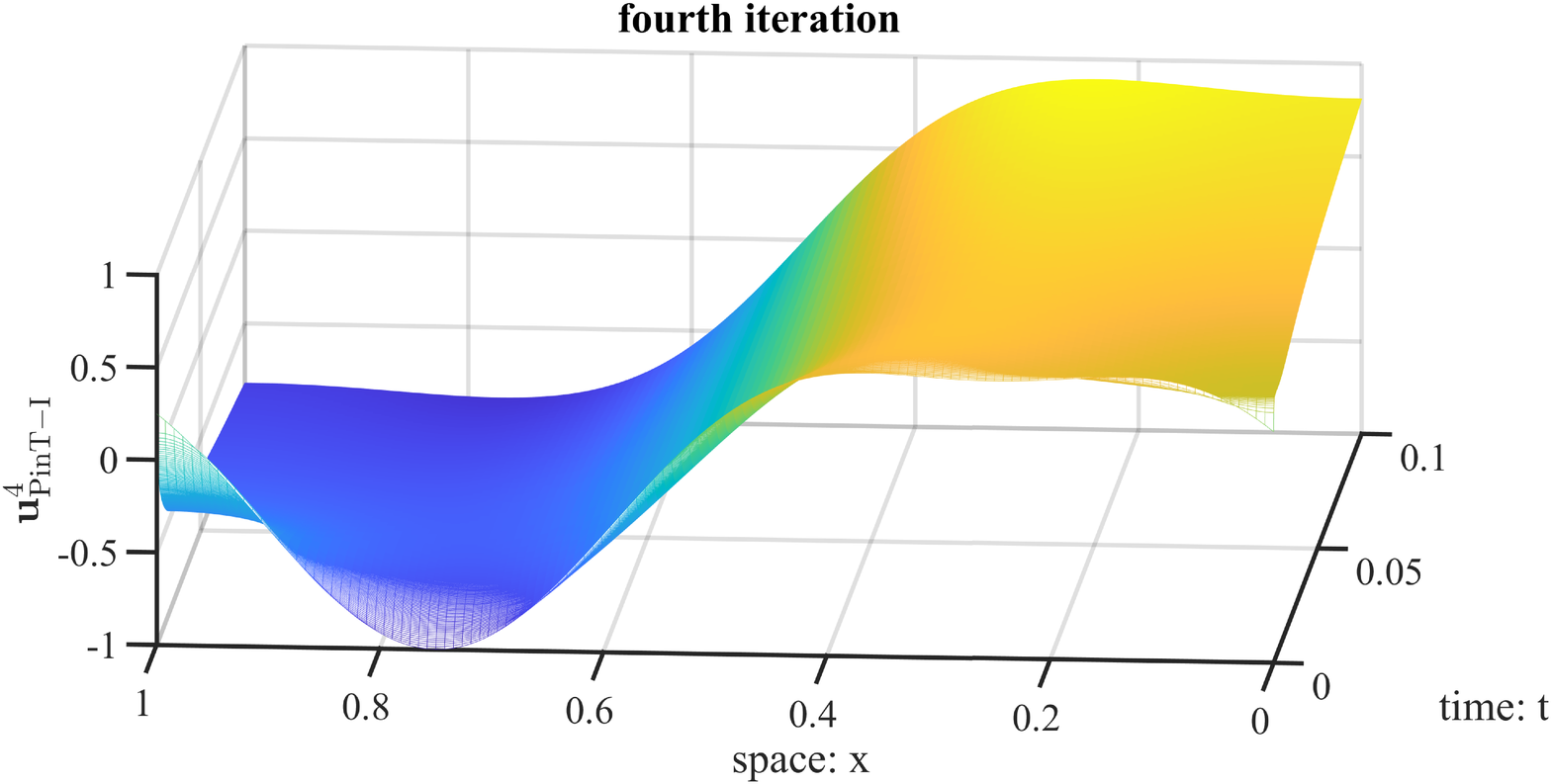} }}
     \subfloat{{\includegraphics[height=5.5cm,width=4.5cm]{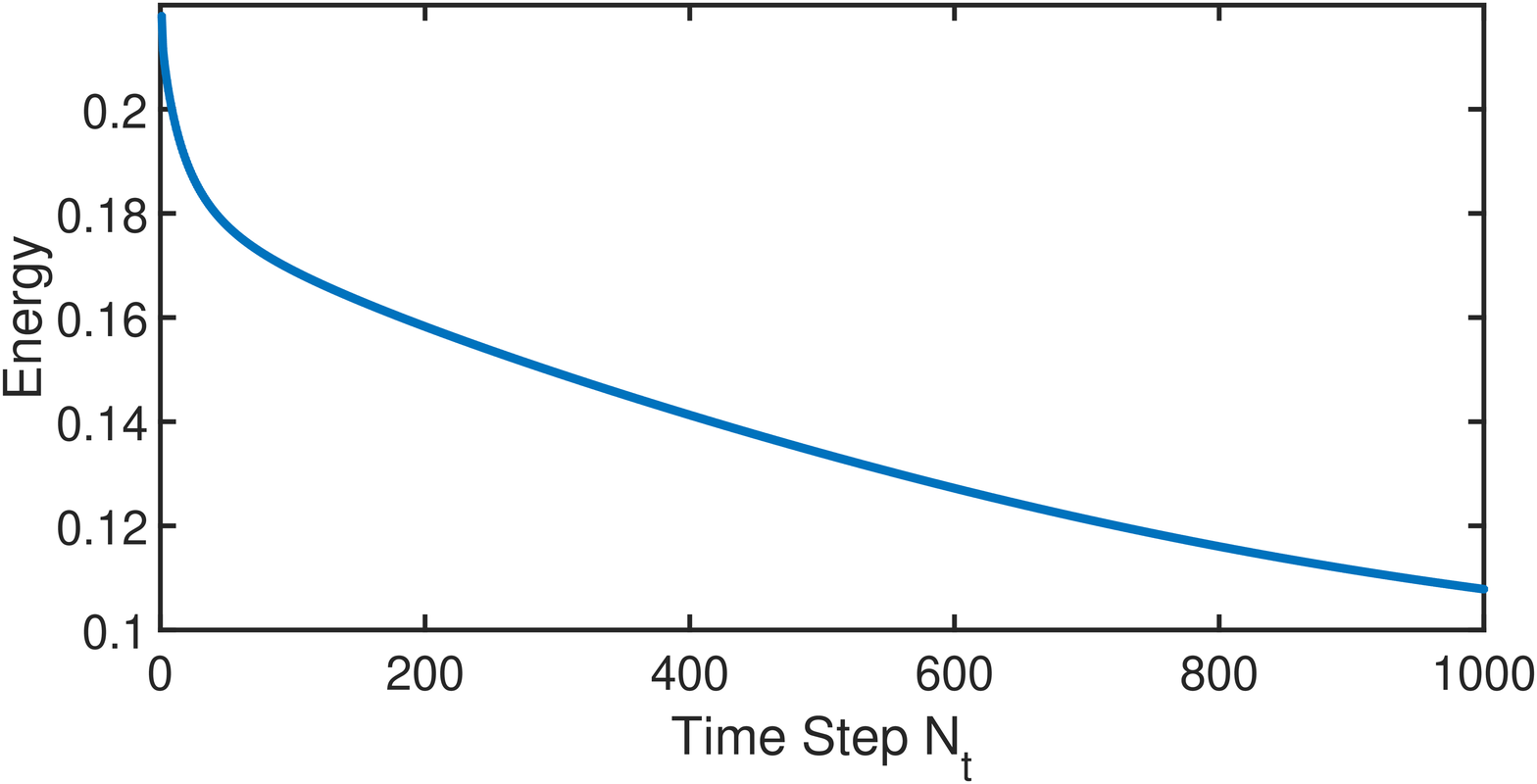} }}
    \caption{First two plot: 3rd and 4th iteration of PinT-I respectively; Last plot: discrete energy vs time }
    \label{pint1_earlystage2}
\end{figure}

To perform numerical experiments in 2D for PinT-II, we fixed the computational domain $\Omega=(0, 1)^2$ with the spatial discretization parameters $h_x=h_y=1/64$. We take the intimal solution $u_0(x, y, 0)=0.1\rand(x,y)$, where $\rand(x,y)$ is a random number in $[-1, 1]$. The solution of the CH equation using PinT-II method obtained over the time interval $(0, T=0.1)$ with fixed time step $\Delta t=10^{-5}$. The evolution of solution of the CH equation with $\epsilon=0.01, \alpha=0.05$ can be seen from Figure \ref{pint2_2d} for the phase separation and phase coarsening stage. We plot the discrete energy on the left of Figure \ref{pint2_energy}, and we find that energy is decaying over the time. To observe the conservation of mass over time interval see the right plot of Figure \ref{pint2_energy}. Hence we can see that the newly formulated PinT-II method respects all the physical phenomena, including the energy decay and mass conservation.

\begin{figure}[h]
    \centering %
\begin{subfigure}{0.2\textwidth}
  \includegraphics[height=4cm,width=4cm]{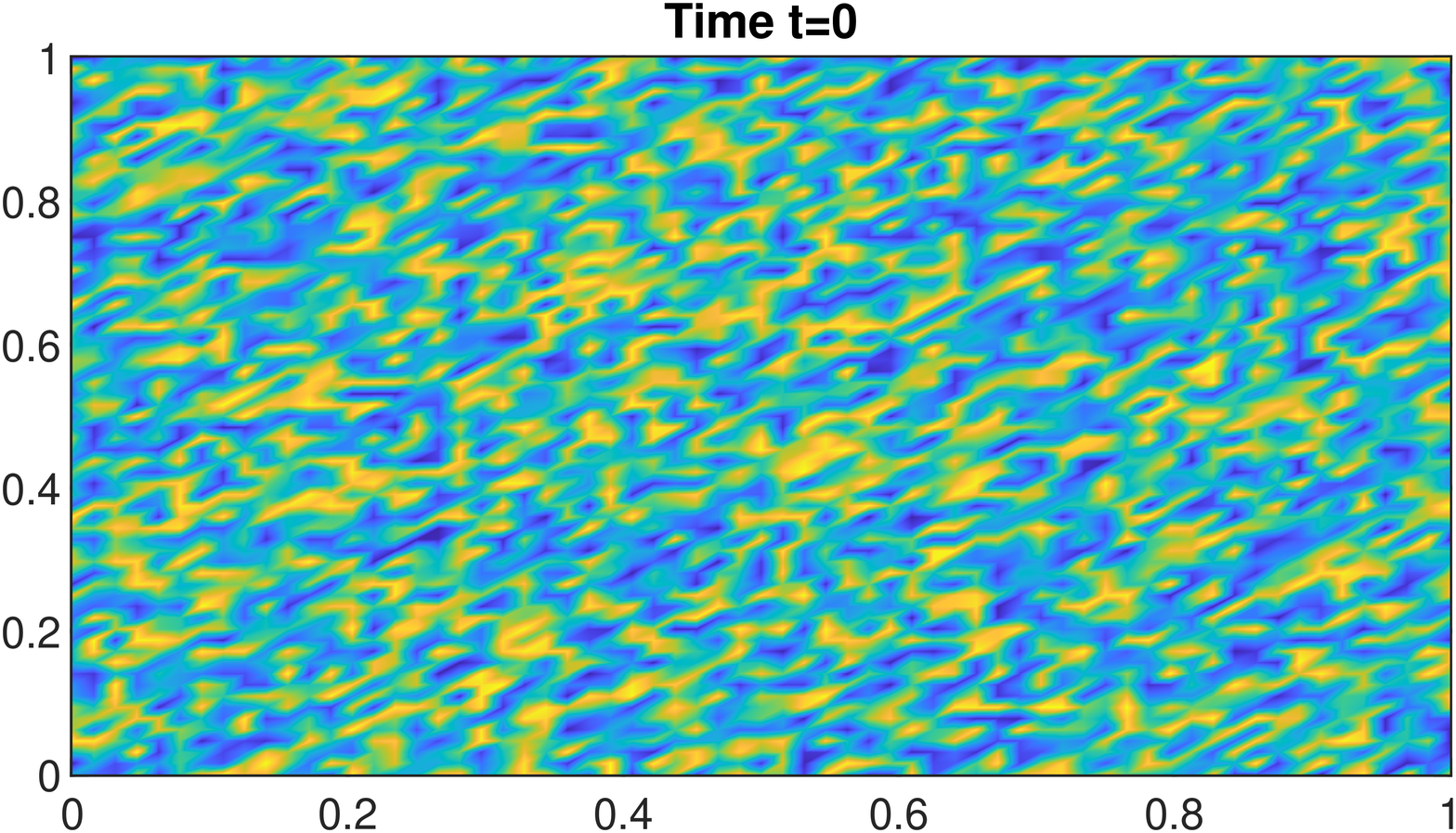}
\end{subfigure} 
\begin{subfigure}{0.2\textwidth}
  \includegraphics[height=4cm,width=4cm]{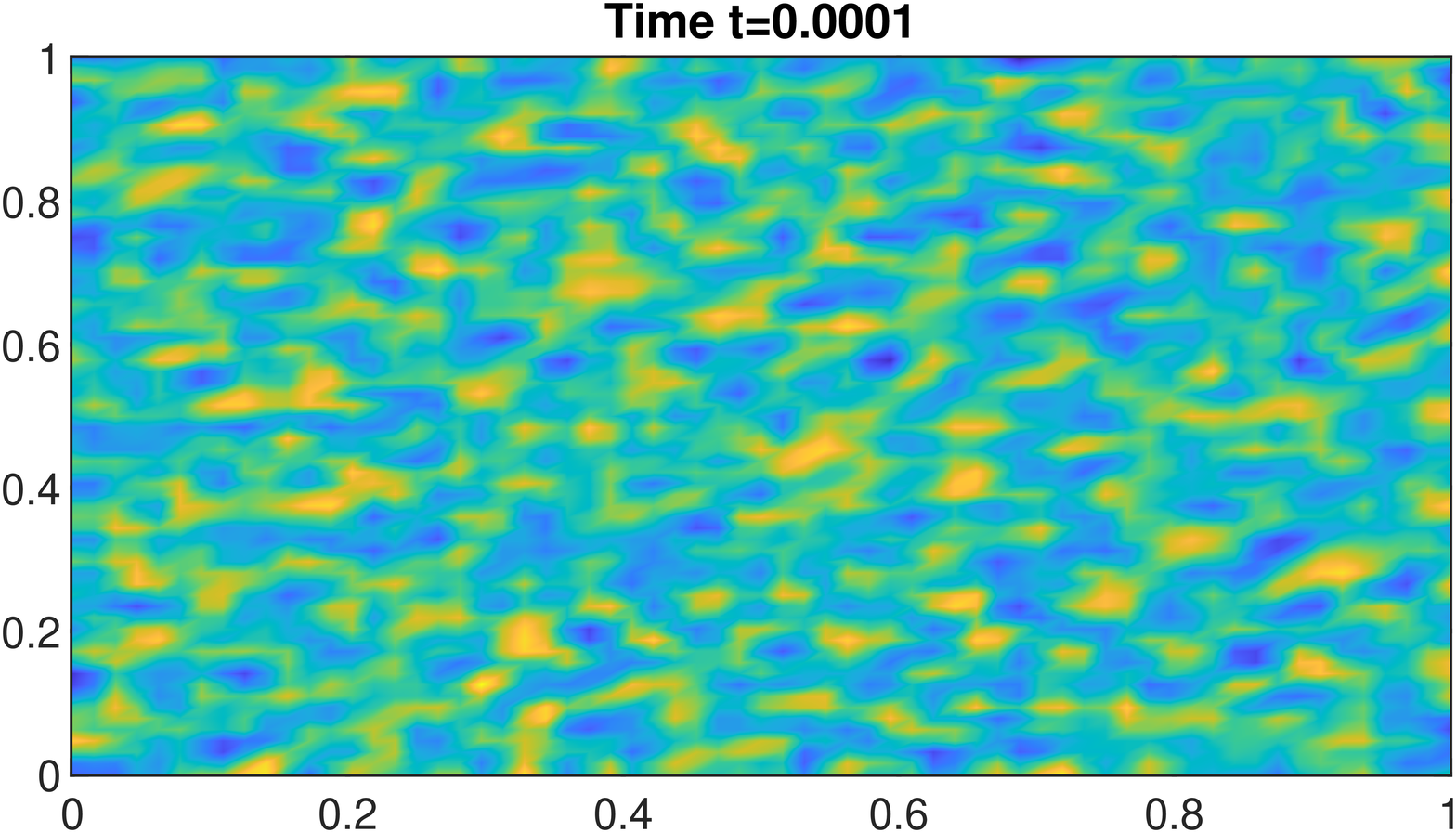} 
\end{subfigure}
\begin{subfigure}{0.2\textwidth}
 \includegraphics[height=4cm,width=4cm]{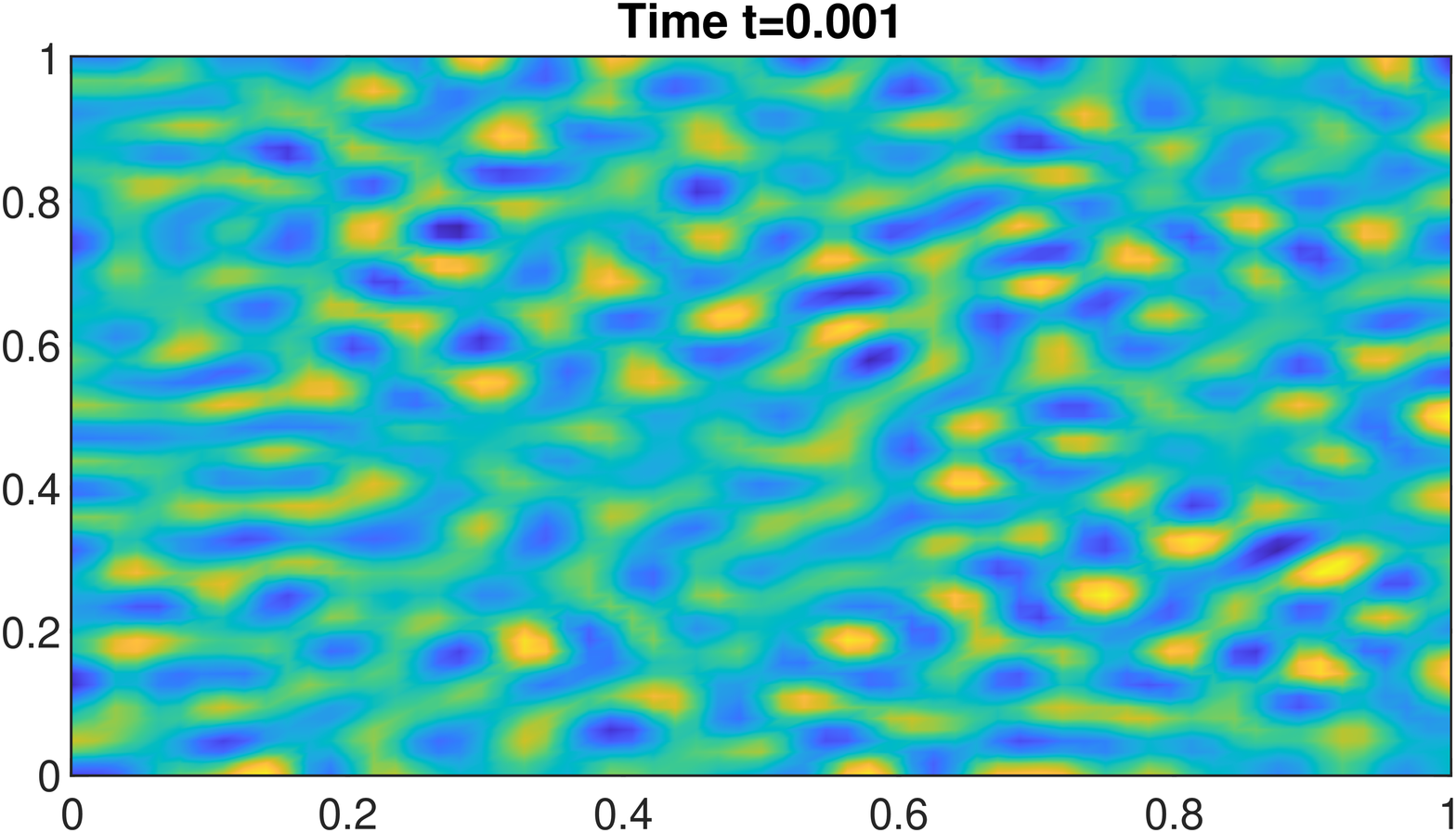}
\end{subfigure}
\begin{subfigure}{0.2\textwidth}
 \includegraphics[height=4cm,width=4cm]{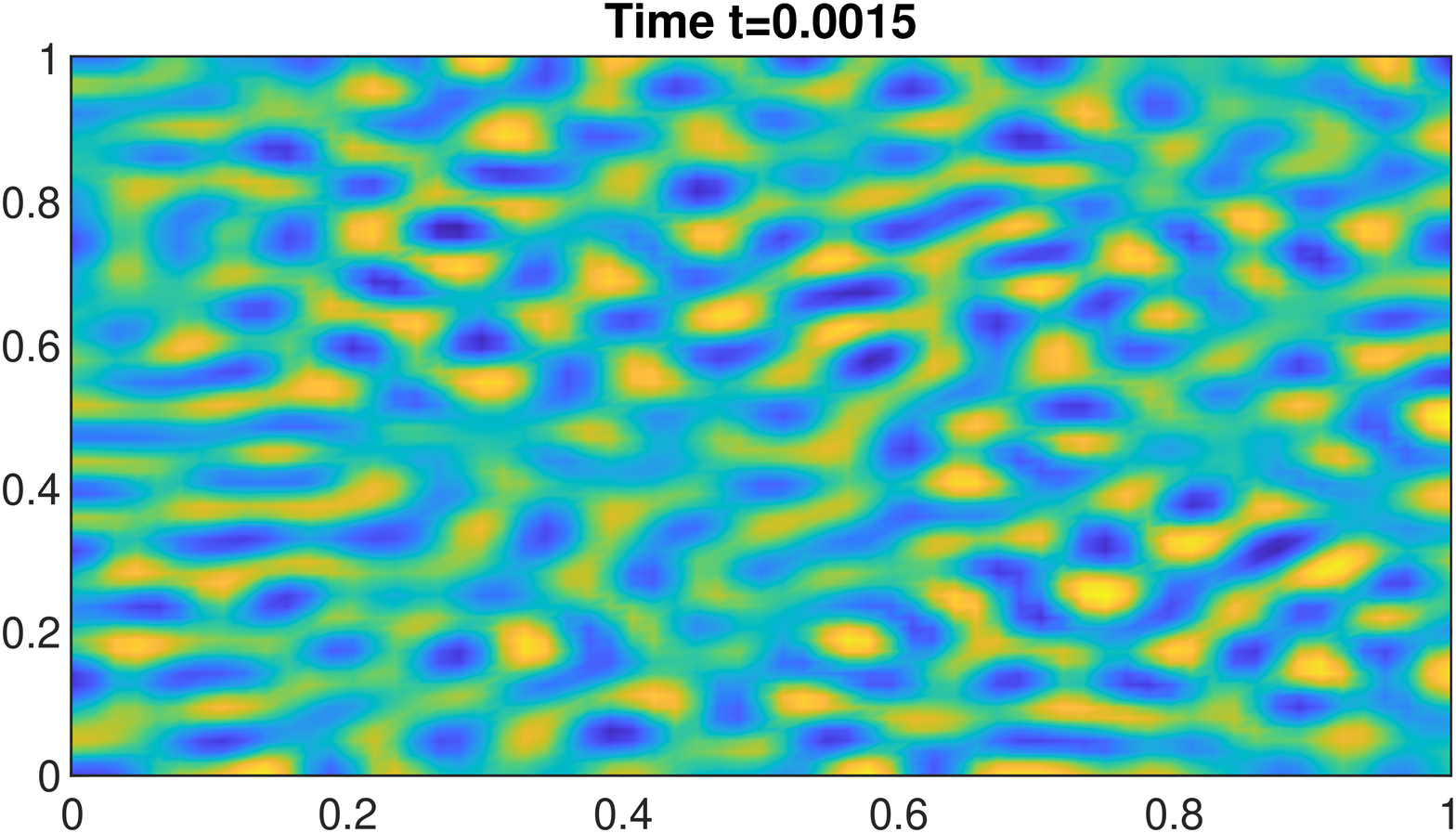}
\end{subfigure}

\medskip
\begin{subfigure}{0.2\textwidth}
  \includegraphics[height=4cm,width=4cm]{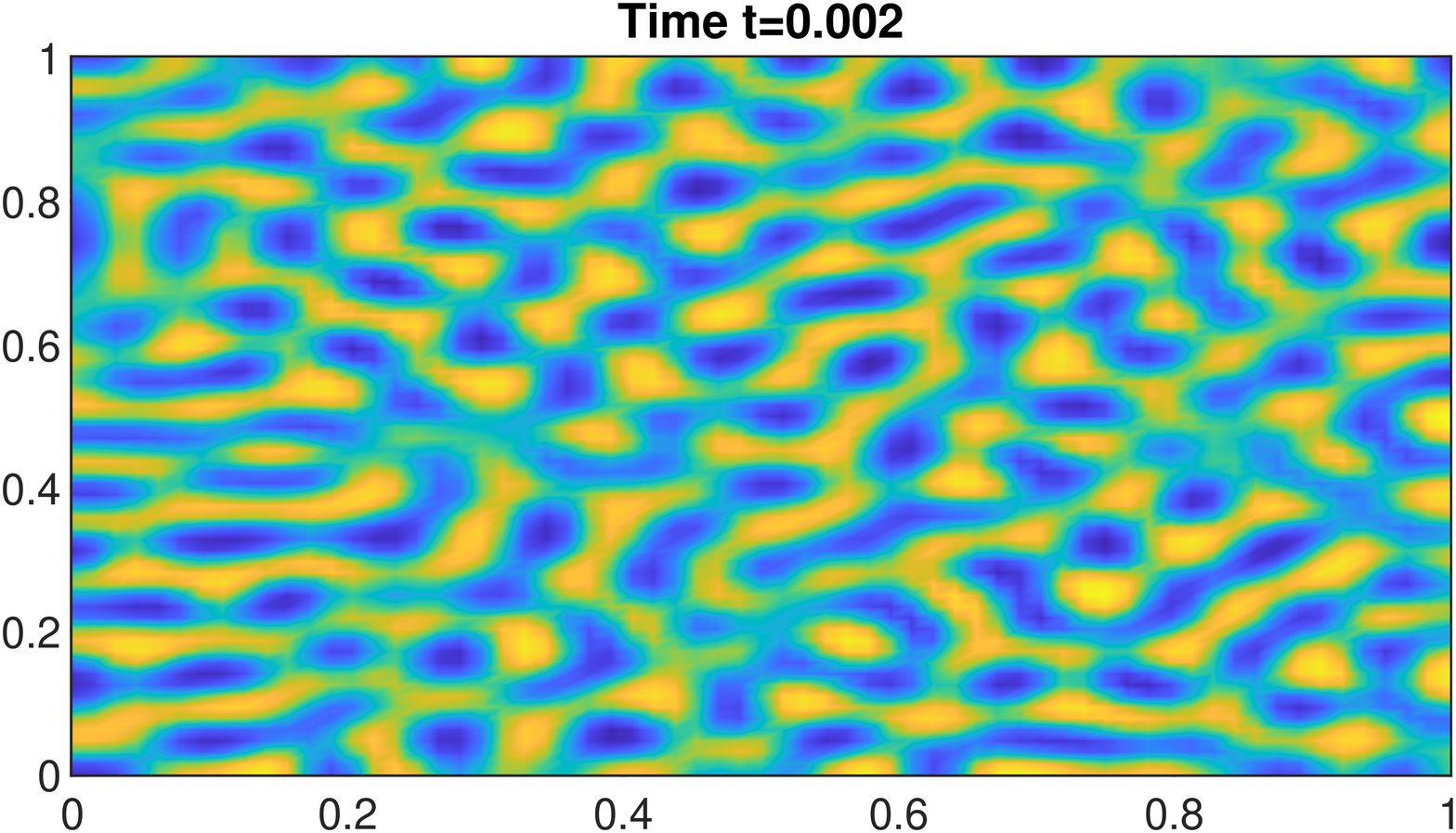}
\end{subfigure}
\begin{subfigure}{0.2\textwidth}
  \includegraphics[height=4cm,width=4cm]{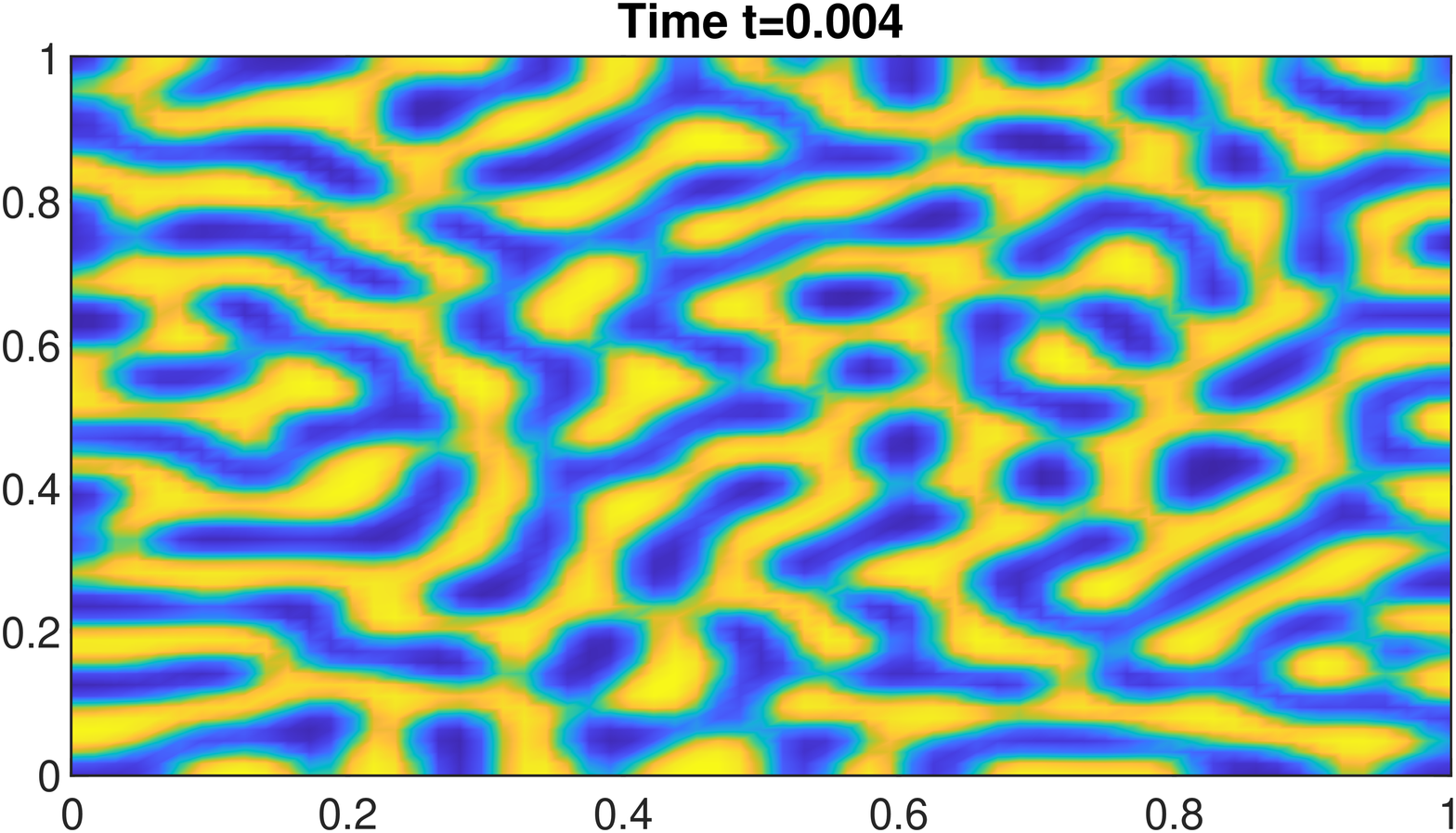}
\end{subfigure}
\begin{subfigure}{0.2\textwidth}
  \includegraphics[height=4cm,width=4cm]{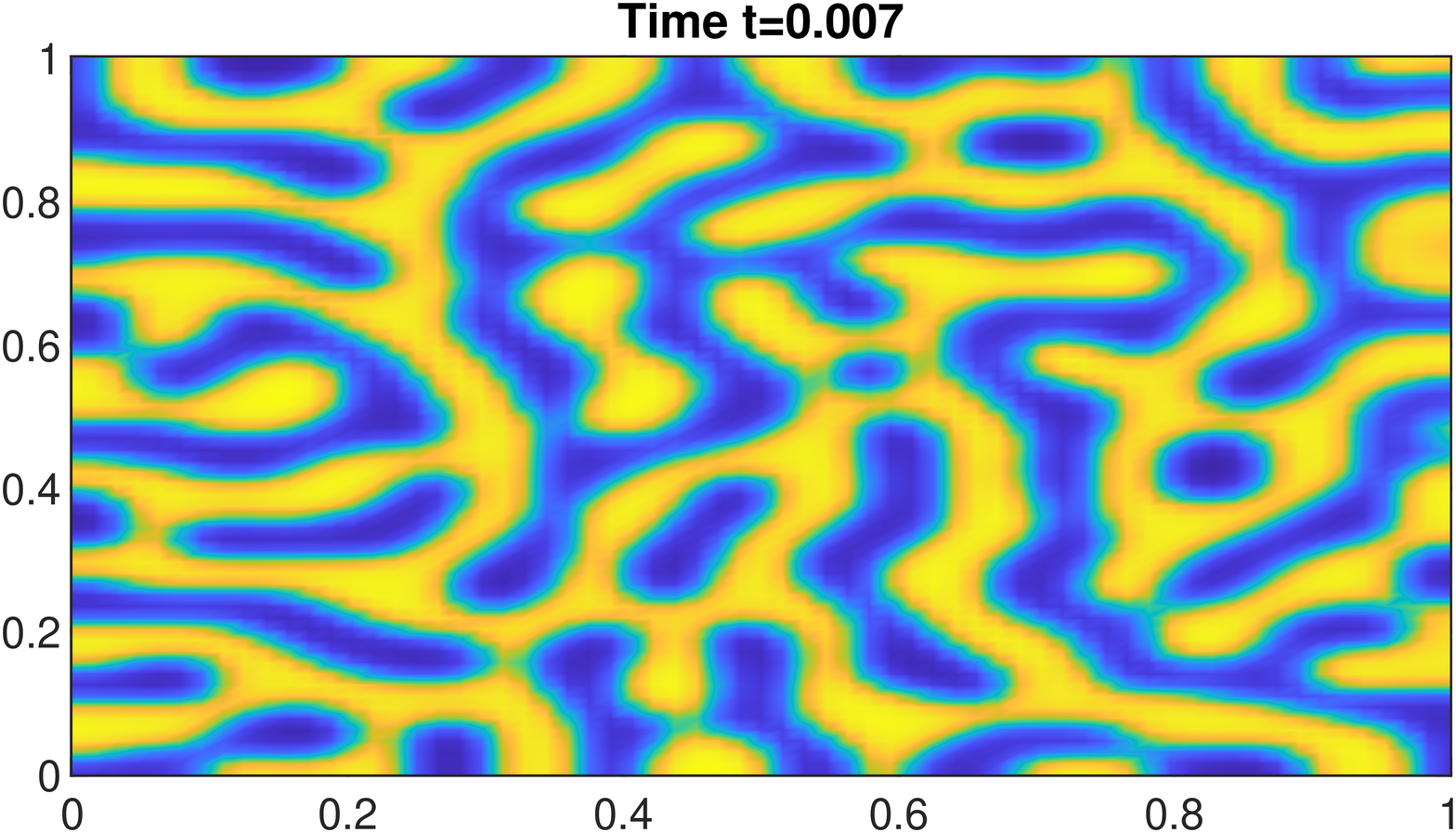}
\end{subfigure}
\begin{subfigure}{0.2\textwidth}
  \includegraphics[height=4cm,width=4cm]{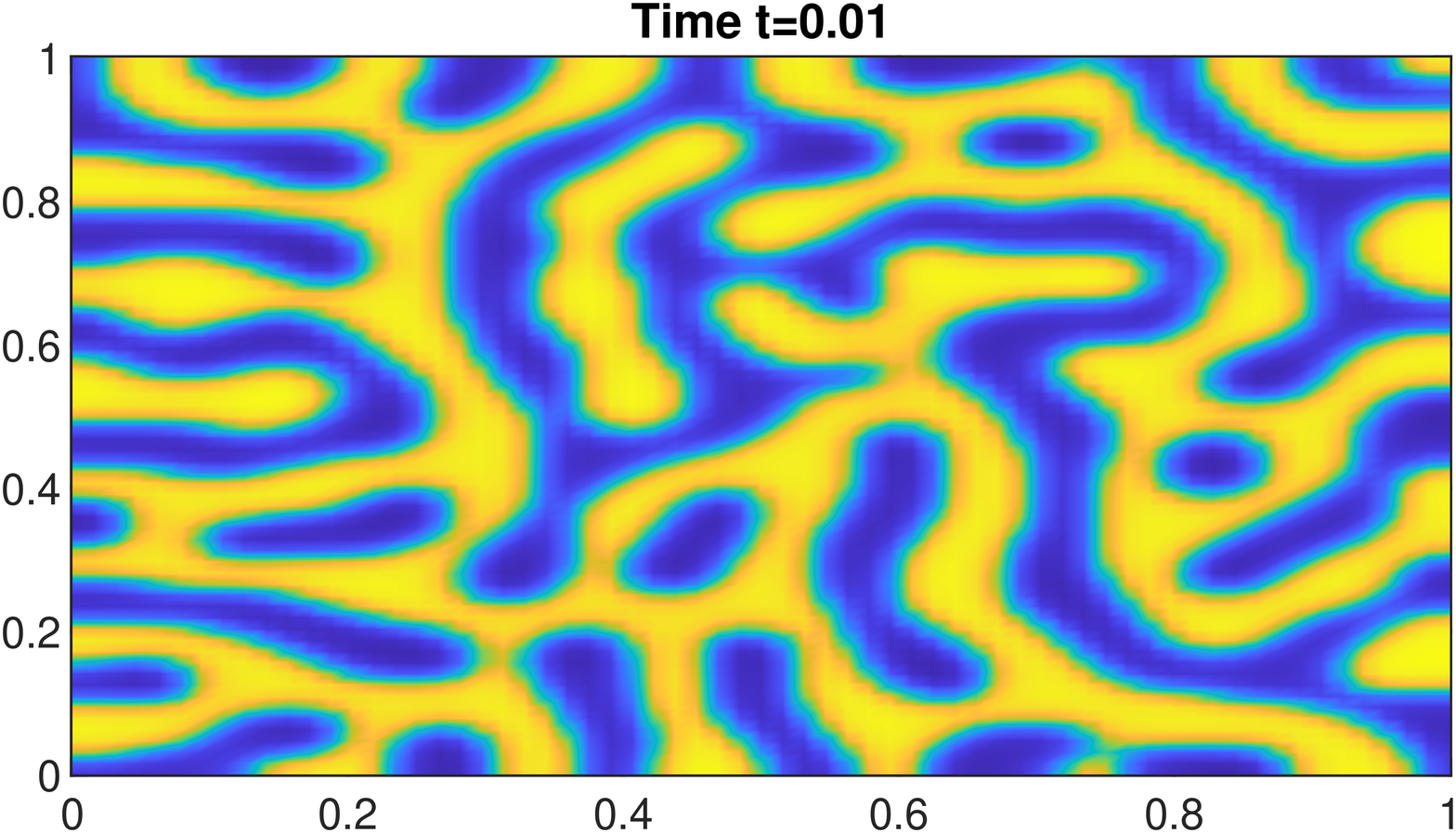}
\end{subfigure}

\medskip
\begin{subfigure}{0.2\textwidth}
  \includegraphics[height=4cm,width=4cm]{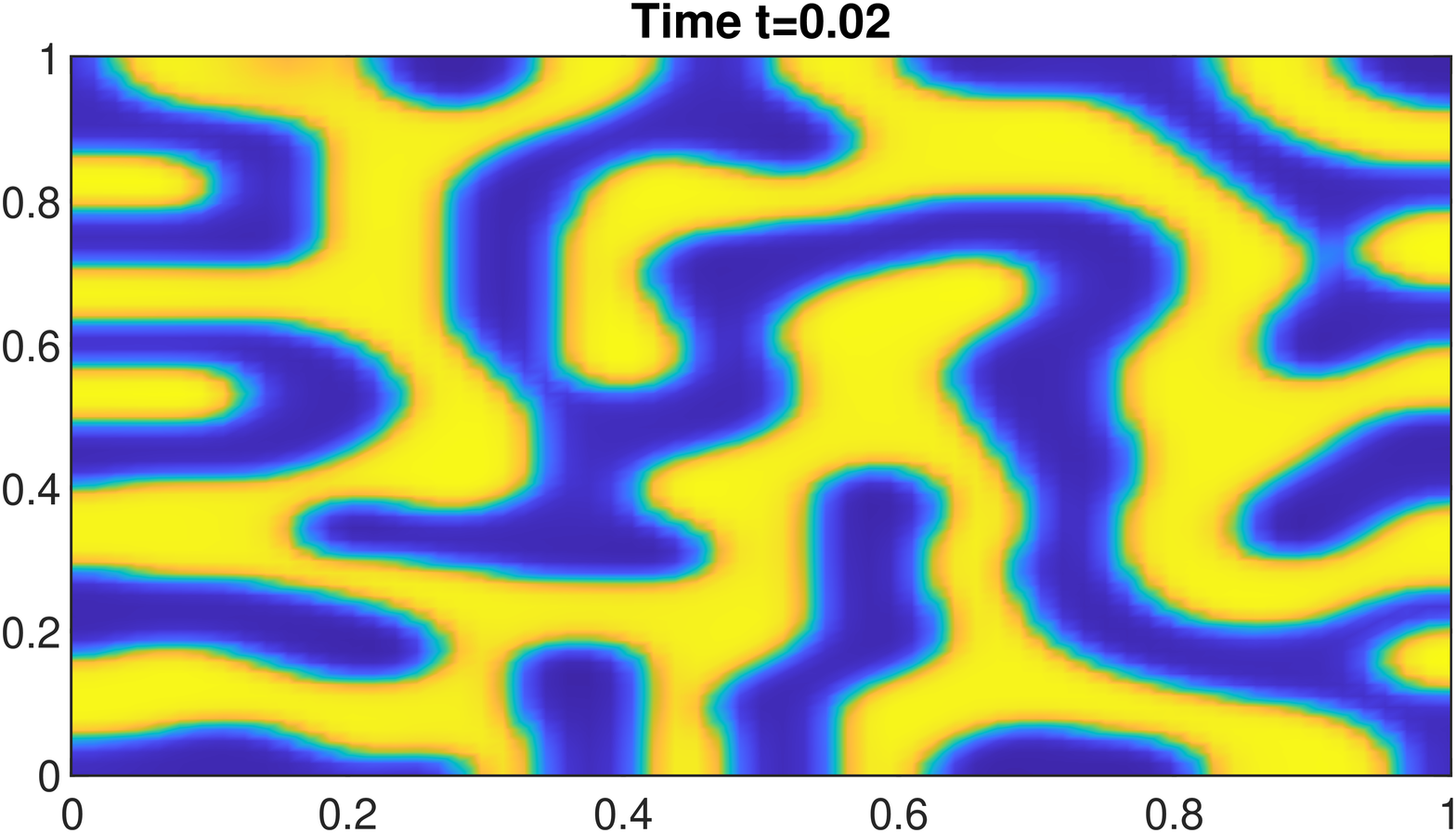}
\end{subfigure}
\begin{subfigure}{0.2\textwidth}
  \includegraphics[height=4cm,width=4cm]{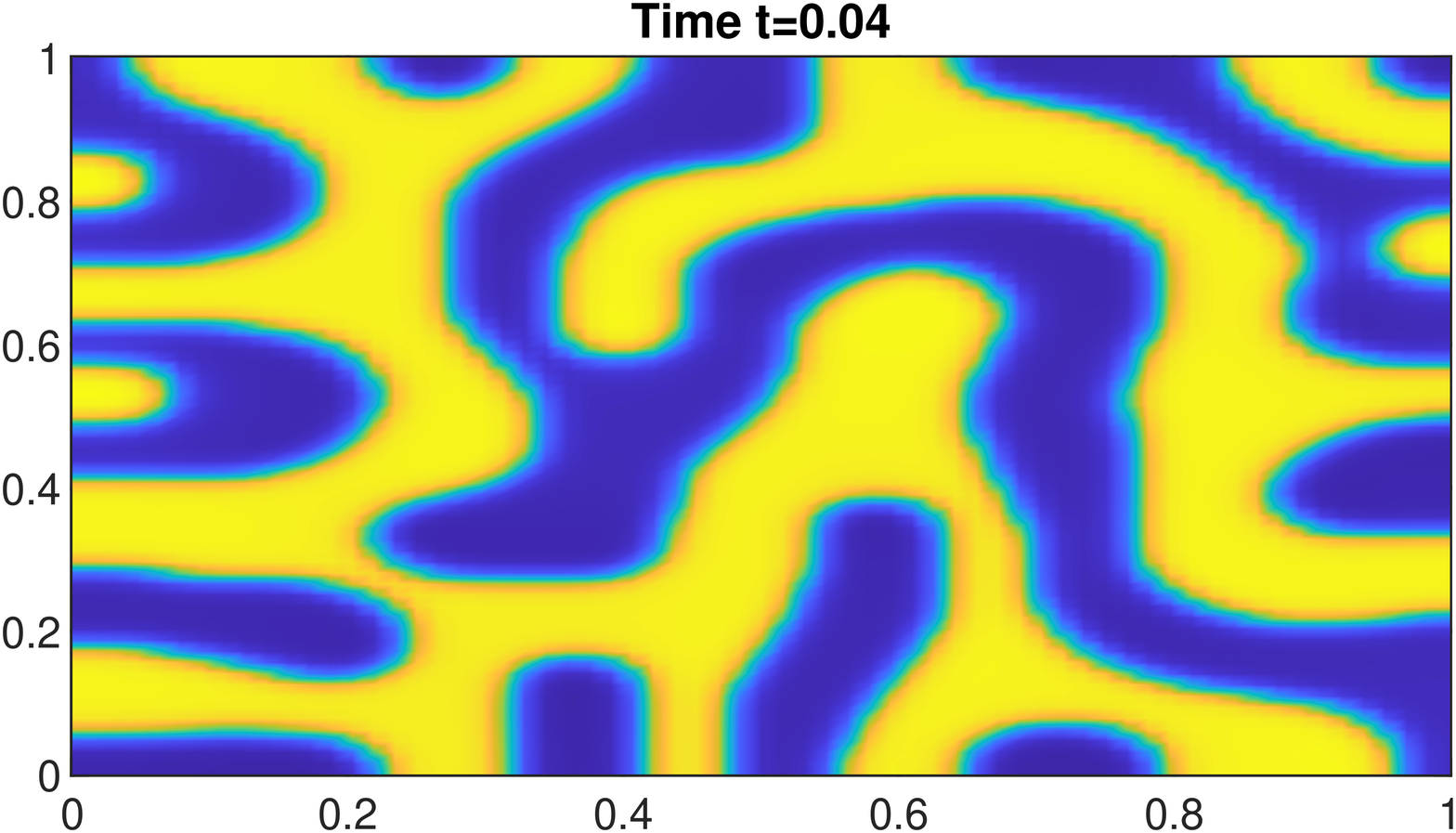}
\end{subfigure} 
\begin{subfigure}{0.2\textwidth}
  \includegraphics[height=4cm,width=4cm]{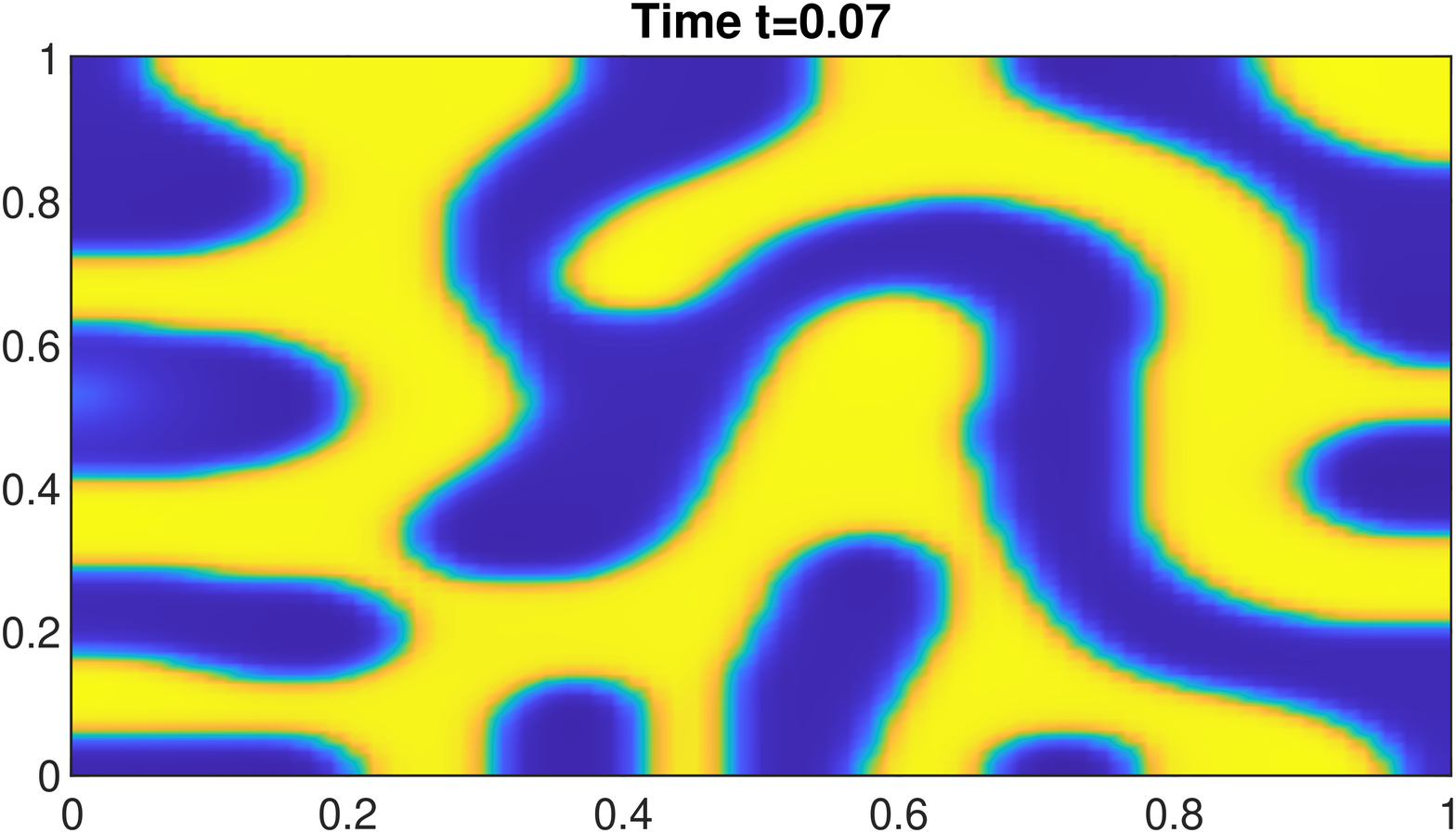}
\end{subfigure}
\begin{subfigure}{0.2\textwidth}
  \includegraphics[height=4cm,width=4cm]{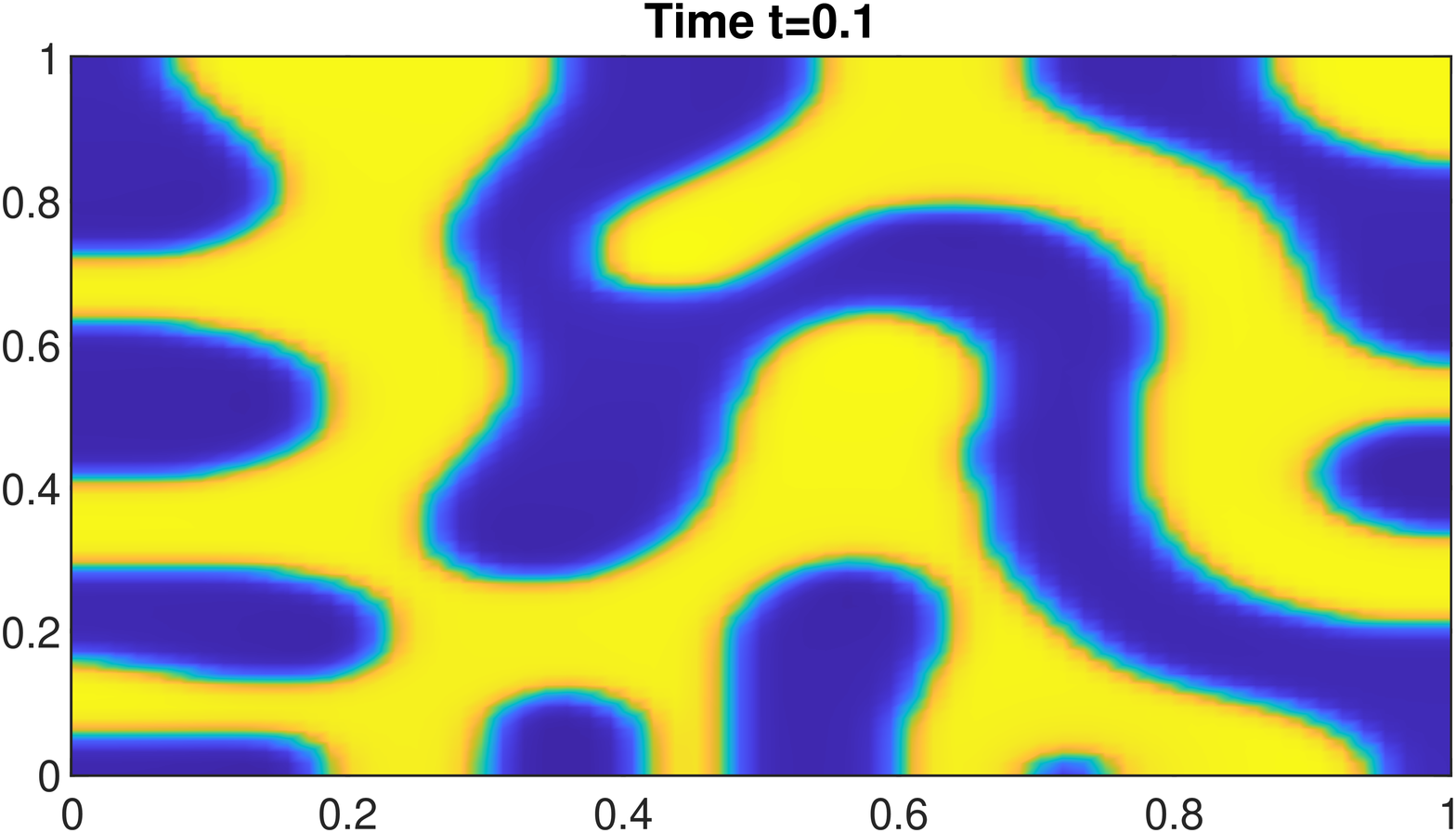}
\end{subfigure}
\caption{Evolution of solution of the CH equation at different time points using PinT-II. From phase separation to phase coarsening stage.}
\label{pint2_2d}
\end{figure}

%
%
%
\begin{figure}[h]
    \centering
    \subfloat{{\includegraphics[height=4cm,width=6cm]{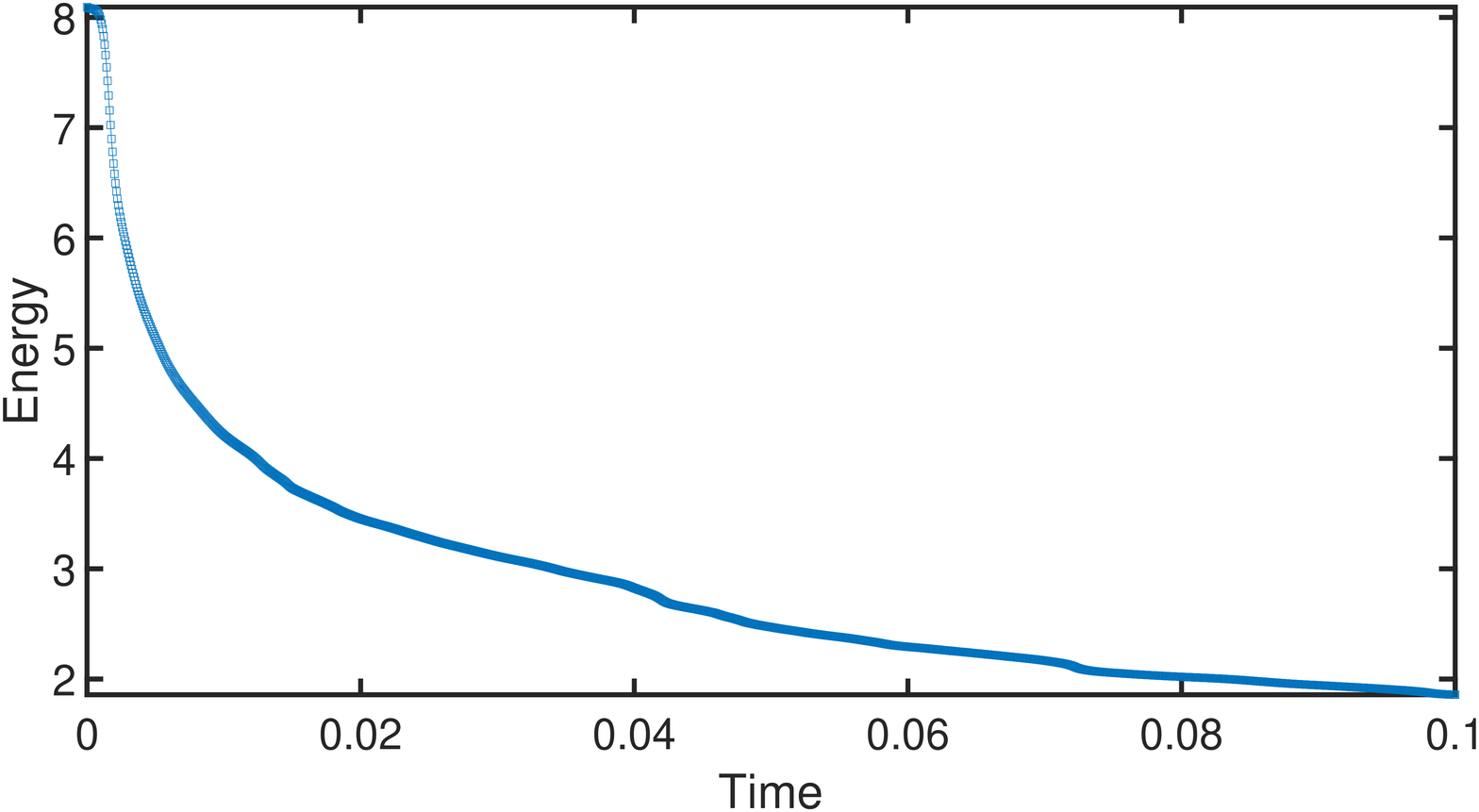} }}
    \subfloat{{\includegraphics[height=4cm,width=6cm]{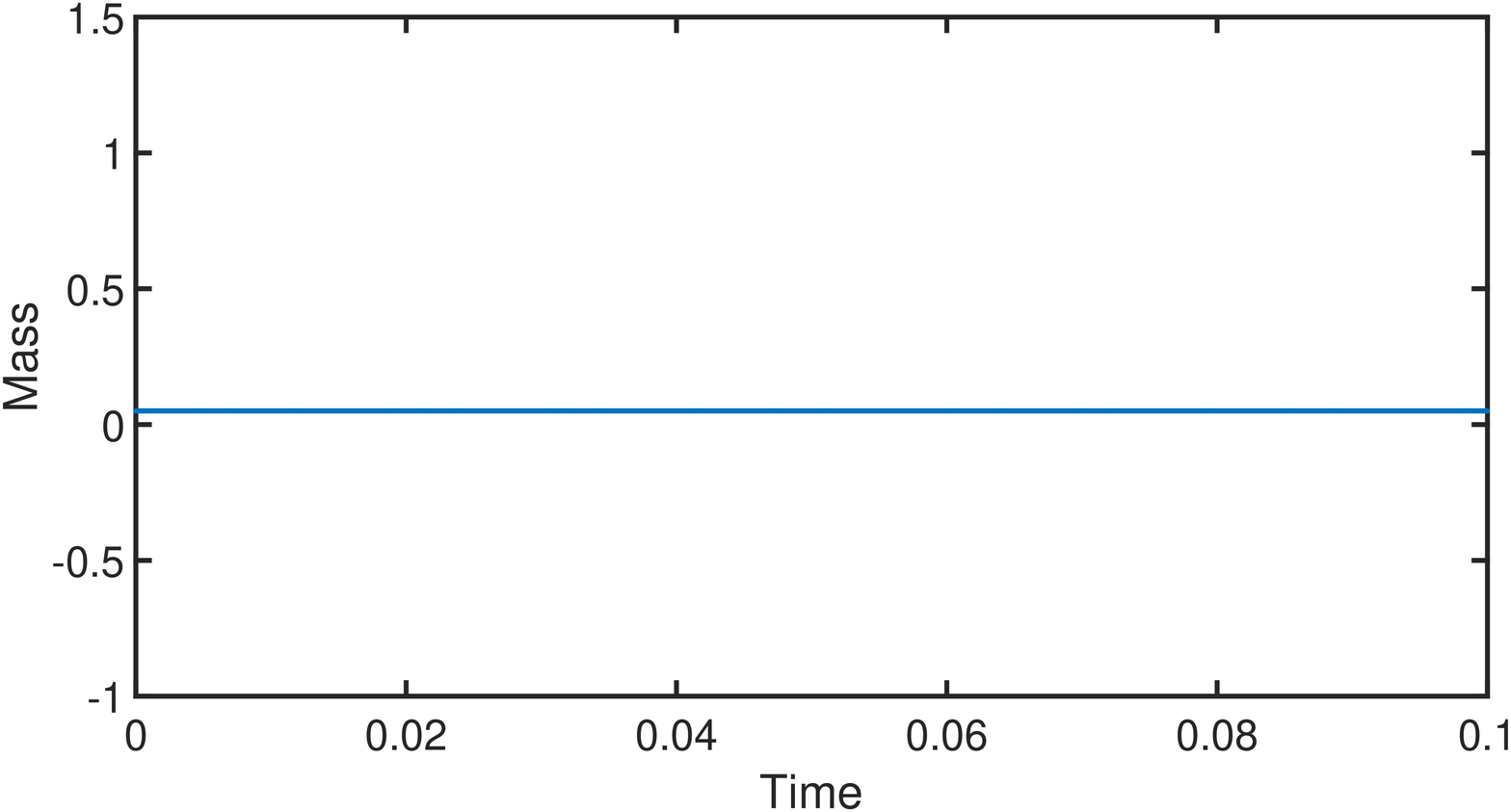} }}
    \caption{On the left: discrete energy vs time; On the right: discrete total mass vs time}
    \label{pint2_energy}
\end{figure}

\section{Conclusions}
We formulated and studied time parallel algorithms for a class of fourth order PDEs. We present rigorous convergence analysis for all the proposed PinT methods. We observe robust convergence behaviour for linear as well as nonlinear models.
\section*{Acknowledgement} The authors would like to thank the CSIR (File No:09/1059(0019)/2018-EMR-I) and DST-SERB (File No: SRG/2019/002164) for the research grant and IIT Bhubaneswar for providing excellent research environment. Authors also want to thank Prof. Yingxiang Xu for his valuable suggestions.

\bibliographystyle{siam}
\bibliography{pdbib}

\end{document}